\documentclass[11pt,twoside,reqno]{amsart}

\usepackage{lmodern}
\usepackage[utf8]{inputenc}
\usepackage{amsmath}
\usepackage{graphicx}
\usepackage{amssymb}
\usepackage{esint}
\usepackage[dvipsnames]{xcolor}
\usepackage{tikz}
\usepackage{xxcolor}
\usepackage{floatrow}
\usepackage{color}
\usepackage{amsthm}
\usepackage{epsfig}
\usepackage[english]{babel}
\usepackage{hyperref}
\usepackage[normalem]{ulem}
\usepackage{mathrsfs}
\usepackage{tikz}
\usetikzlibrary{decorations.markings,backgrounds}
\usetikzlibrary{arrows.meta}
\usepackage{pgfplots}
\usepackage{subfigure}
\usepackage{caption}
\usepackage{bbm}

\usepackage{stmaryrd}

\usepackage{float}
\usepackage{pst-plot}

\hypersetup{
	colorlinks,
	linkcolor={red!80!black},
	citecolor={blue!50!black},
	urlcolor={blue!80!black}
}
 \usepackage{diagbox}

\usepackage[a4paper,top=3.5 cm,bottom=3 cm,left=2.5 cm,right=2.5 cm]{geometry}
\usepackage{dsfont}
\usepackage{mathtools}

\allowdisplaybreaks
\usepackage[colorinlistoftodos]{todonotes}
\usepackage{url}

\usepackage{graphicx,tikz}

\newtheorem{theorem}{Theorem}[section]
\newtheorem{lemma}{Lemma}[section]
\newtheorem{corollary}{Corollary}[section]
\newtheorem{definition}{Definition}[section]

\numberwithin{equation}{section}
\begin{document}
\begin{abstract}
We investigate heteroclinical solutions of a vector-valued Bose-Einstein condensation system involving the $p$-Laplacian. The main difficulty comes from the degeneracy of the $p$-Laplacian and the possible nonsmooth behavior of the potential wells.

Under suitable assumptions on the double-well potential, we establish the existence and detailed asymptotic behavior of heteroclinical solutions. In particular, we prove the strict monotonicity of every component, classify the blow-up profiles near the potential wells through Weiss-type monotonicity formulas, and obtain an almost homogeneity property of the solution.

Furthermore, after re-parametrizing the heteroclinical solution by its $l_p$-arc length, we derive a curvature estimate for the trajectory near the potential wells. These results provide the geometric control needed for the construction of radial barrier functions for the corresponding Bose-Einstein condensation system.
\end{abstract}

\author{Leyun Wu}
\address{School of Mathematics, South China University of Technology, Guangzhou, 510640, P. R. China}\email{leyunwu@scut.edu.cn}
\author{Chilin Zhang}
\address{School of Mathematical Sciences, Fudan University, Shanghai 200433, P. R. China}\email{zhangchilin@fudan.edu.cn}

\title[Curvature estimate]{Curvature estimate for the heteroclinical solution to a Bose-Einstein condensation system}

\maketitle

\maketitle

\noindent{\bf Keywords.} Bose-Einstein condensates; heteroclinical solutions; curvature estimate.\\
2020 {\bf MSC.} 35Q56; 34D05; 35R35.


\section{Introduction}
Phase transition phenomena arise in a wide variety of physical systems and have been extensively studied from both physical and mathematical perspectives. Among various models describing phase transitions, the Allen-Cahn equation and its vector-valued generalizations play a fundamental role in describing interfaces between different stable phases. In particular, heteroclinic connections, which represent one-dimensional transition layers connecting different equilibrium states, provide an important framework for understanding the geometry and structure of phase boundaries.

A particularly important class of vector-valued phase transition models comes from multi-component Bose-Einstein condensates. In the strongly segregated case, different condensate components tend to separate into distinct phases, and the transition interfaces between these phases are described by coupled nonlinear elliptic systems. The mathematical analysis of such interfaces involves several challenging issues, including the interaction between different components, the geometry of the potential wells, and the possible degeneracy of the underlying diffusion operators.

In this paper, we investigate the asymptotic behavior of heteroclinical solutions to the following one-dimensional Bose-Einstein condensation system:
\begin{equation*}
    \Delta_{p}\mathbf{u}=DW(\mathbf{u}),\quad\mathbf{u}:\mathbb{R}\to[-\Lambda_{1},\Lambda_{1}]\times\cdots\times[-\Lambda_{m},\Lambda_{m}].
\end{equation*}
Here, $\mathbf{u}$ is an $m$-dimensional vector-valued function, which is written as
\begin{equation*}
    \mathbf{u}=(u^{1},\cdots,u^{m}),\quad\mbox{where }u^{i}\in[-\Lambda_{i},\Lambda_{i}]
\end{equation*}
For simplicity, we denote
\begin{equation*}
    \mathbf{e}=(\Lambda_{1},\cdots,\Lambda_{m}).
\end{equation*}

The potential function $W$ is assumed to be a double-well potential satisfying the following conditions:
\begin{itemize}
    \item[(A1)] For all $\mathbf{u}\neq\pm\mathbf{e}$, $W$ is $C^{2}$ near $\mathbf{u}$ and $D_{ij}W(\mathbf{u})<0$ for all $i\neq j$.
    \item[(A2)] $W(\mathbf{u})>0$ for all $\mathbf{u}\neq\pm\mathbf{e}$, and $W(\pm\mathbf{e})=0$.
    \item[(A3)] $W(\mathbf{u})=W_{-}(\mathbf{u}+\mathbf{e})\cdot\Big(1+E_{-}(\mathbf{u}+\mathbf{e})\Big)$ near $-\mathbf{e}$, and $W(\mathbf{u})=W_{+}(\mathbf{e}-\mathbf{u})\cdot\Big(1+E_{+}(\mathbf{e}-\mathbf{u})\Big)$ near $\mathbf{e}$.
    \item[(A4)] Here, $W_{\pm}:[0,+\infty)^{m}\to\mathbb{R}$ is a homogeneous function of order $q_{\pm}>0 $, respectively. We also assume that $W_{+}$ and $W_{-}$ are both $C^{2}([0,+\infty)^{m}\setminus\{0\})$, strictly positive, and satisfy $D_{ij}W_{\pm}<0$ on the sphere $[0,+\infty)^{m}\cap\partial B_{1}$.
    \item[(A5)] Here, $E_{\pm}:[0,\epsilon]^{m}\to\mathbb{R}$ are both $C^{2}([0,\epsilon]^{m})$ functions vanishing at the origin.
\end{itemize}
The operator $\Delta_{p}$ with $p>1$ denotes the standard $p$-Laplacian. In the one-dimensional setting, it takes the form
\begin{equation*}
    \Delta_{p}f(x)=\frac{d}{dx}\Big(|f'(x)|^{p-2}f'(x)\Big)=(p-1)|f'(x)|^{p-2}f''(x).
\end{equation*}
Consequently, the system takes the following precise form:
\begin{equation}\label{eq. GP equation}
    \Delta_{p}u_{i}=(p-1)|u^{i}_{x}|^{p-2}u^{i}_{xx}=W_{i}(u^{1},\cdots,u^{m}),\quad1\leq i\leq m.
\end{equation}

The classical Allen-Cahn equation corresponds to the scalar
case with quadratic diffusion. It consists of studying minimizers (or critical points) of the Ginzburg-Landau energy
\begin{equation*}
    J(u,\Omega)=\int_{\Omega}\Big\{\frac{|\nabla u|^{2}}{2}+W(u)\Big\}dx,\quad W(u)=\frac{1}{4}(1-u^{2})^{2}\chi_{[-1,1]}.
\end{equation*}
This type of energy function was originally developed by Landau, Ginzburg, and Pitaevskii \cite{GP58,Landau37,Landau67}  to describe phase transitions in thermodynamics. The minimizers of such energies satisfy the Allen-Cahn equation $$\Delta u=W'(u)=u^{3}-u.$$ A large amount of progress has been made in understanding the existence, uniqueness, symmetry, and asymptotic properties of its transition layers. For vector-valued Allen-Cahn systems, however, the situation becomes considerably more complicated due to the interaction among different components and the geometry of the zero set of the potential.

For the equation \eqref{eq. GP equation} considered in this paper, it is straightforward to verify that \eqref{eq. GP equation} is the critical point of the functional
\begin{equation}\label{eq. GL energy for BE system}
    J(\mathbf{u},\Omega)=\int_{\Omega}\Big\{\frac{1}{p}\sum_{i=1}^{m}|\nabla u^{i}|^{p}+W(\mathbf{u})\Big\}dx.
\end{equation}
In view of assumptions (A1)-(A5), it is clear that the constant states $\mathbf{u}\equiv\pm\mathbf{e}$ are two global minimizers of the energy functional  \eqref{eq. GL energy for BE system}. However, more interesting solutions are the phase transition solutions, which interpolate between values close to $-\mathbf{e}$ and $\mathbf{e}$ within a relatively narrow transition layer, commonly referred to as the phase field region.

\subsection{Existence and basic properties of the heteroclinical solution}
In this paper, we study one-dimensional phase-transitions solutions to \eqref{eq. GP equation}, especially the heteroclinical solutions defined below.

\begin{definition}\label{def. heteroclinical} A function
    $\mathbf{u}(x):\mathbb{R}\to[-\Lambda_{1},\Lambda_{1}]\times\cdots\times[-\Lambda_{m},\Lambda_{m}]$ 
       is called a heteroclinical solution to \eqref{eq. GP equation} if it satisfies the following conditions:
    \begin{itemize}
        \item[(1)]  $\mathbf{u}$ solves \eqref{eq. GP equation} in the classical sense in $\mathbb{R}$;
        \item[(2)] $\displaystyle\lim_{x\to\pm\infty}\mathbf{u}(x)=\pm\mathbf{e}$;
        \item[(3)] The set $\{x\in\mathbb{R}:\mathbf{u}(x)\neq\pm\mathbf{e}\}$ is connected,  namely, it is an interval.
    \end{itemize}
\end{definition}

The following theorem proves the existence of heteroclinical solutions and characterizes their basic qualitative behavior, including precise asymptotic estimates, monotonicity properties, and decay rates. Similar results have been obtained in \cite{AFN21,AftalionSourdis2019} for the case  $p=q_{\pm}=2$. The basic idea to prove the existence is to formulate the problem as an arc-length minimizing problem for curves in $[-\Lambda_{1},\Lambda_{1}]\times\cdots\times[-\Lambda_{m},\Lambda_{m}]$ endowed with a degenerate Riemannian/Finsler metric.
\begin{theorem}\label{thm. basic}
    Assume that (A1)-(A5) hold for the potential function $W(\mathbf{u})$. Then, there exists a heteroclinical solution to \eqref{eq. GP equation}. Moreover, this solution satisfies the identity
    \begin{equation}\label{eq. speed identity}
        |\mathbf{u}'(x)|_{l_{p}}=\sqrt[p]{\frac{p}{p-1}\cdot W(\mathbf{u}(x))},\quad\mbox{for all } x\in\mathbb{R}.
    \end{equation}
    Let $(a,b)=\{x\in\mathbb{R}^{n}:\mathbf{u}(x)\neq\pm\mathbf{e}\}$, where $a,b\in[-\infty,+\infty]$. 
    Then, for every $x\in(a,b)$,
    \[
    u_x^i(x)>0,\qquad i=1,\dots,m.
    \]
    Furthermore, as $x\to a+$, we have the following estimates for $\mathbf{u}$:
 \begin{itemize}
        \item \textbf{Singular potential well at $-\mathbf{e}$.}
        If $q_{-}<p$, then $a\neq-\infty$. Moreover,
        \[
        u^{i}(x)+\Lambda_i\sim (x-a)^{\alpha_-},\qquad i=1,\dots,m,
        \]
        where $\alpha_-=\frac{p}{p-q_-}$.
        \item \textbf{Regular potential well at $-\mathbf{e}$.}
        If $q_-=p$, then $a=-\infty$. Moreover,
        \[
        u^{i}(x)+\Lambda_i\sim \theta_-^{x},\qquad i=1,\dots,m,
        \]
        where $\theta_->1$ is a constant.
        \item \textbf{Degenerate potential well at $-\mathbf{e}$.}
        If $q_->p$, then $a=-\infty$. Moreover,
        \[
        u^{i}(x)+\Lambda_i\sim (-x)^{\alpha_-},\qquad i=1,\dots,m,
        \]
        where $\alpha_-=\frac{p}{p-q_-}$.
    \end{itemize}
    A similar asymptotic estimate holds as $x\to b-$.
\end{theorem}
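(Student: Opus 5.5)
We construct the heteroclinical solution by minimizing a degenerate Finsler length between $-\mathbf{e}$ and $\mathbf{e}$, then reparametrize. For a Lipschitz curve $\gamma:[0,1]\to Q:=\prod_i[-\Lambda_i,\Lambda_i]$ set
\[
L(\gamma)=\int_0^1 c_p\,W(\gamma(t))^{\frac{p-1}{p}}\,|\gamma'(t)|_{l_p}\,dt,\qquad c_p=p^{\frac1p}(p-1)^{\frac{p-1}{p}}\Big/\Big(\tfrac{p}{p-1}\Big)^{\frac{p-1}{p}}\cdot\tfrac1{p-1}\ \text{(chosen so that Young's inequality is sharp)}.
\]
By Young's inequality, $\frac1p|\mathbf{u}'|_{l_p}^p+W(\mathbf{u})\ge C\,W^{(p-1)/p}|\mathbf{u}'|_{l_p}$, with equality iff $|\mathbf{u}'|_{l_p}^p=\frac{p}{p-1}W(\mathbf{u})$. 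Hence $J(\mathbf{u},\mathbb{R})\ge L(\mathbf{u})$, and a minimizing curve reparametrized to satisfy \eqref{eq. speed identity} attains equality and is an energy minimizer, hence a classical solution of \eqref{eq. GP equation}.

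\textbf{Existence of a minimizing curve.} The metric degenerates only at $\pm\mathbf{e}$, which is integrable since $W^{(p-1)/p}\sim r^{q(p-1)/p}$. We parametrize by $l_p$-arclength and use Arzel\`a–Ascoli on $Q$, noting that lengths are bounded because the segment from $-\mathbf{e}$ to $\mathbf{e}$ has finite $L$. The remaining issue is that a minimizer might wander along a degenerate region, but $W>0$ off $\pm\mathbf{e}$ by (A2), so $L$ is lower semicontinuous and minimizing sequences cannot pass through the wells except at the endpoints. This gives property (3).

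\textbf{Monotonicity.} We use (A1) together with $D_{ij}W_\pm<0$. First, the componentwise rearrangement $\gamma\mapsto(\max,\min)$ with respect to any competitor, or more directly replacing each $u^i$ by its monotone envelope along a "cut-and-paste" argument, does not increase $J$ because the mixed derivatives are negative (submodularity). This yields weak monotonicity $u^i_x\ge0$. Strict positivity follows from a strong maximum principle: differentiating the system, $v^i=u^i_x\ge0$ satisfies a cooperative linearized system $(p-1)(|v^i|^{p-2}v^i)'=\sum_j W_{ij}v^j$ with off-diagonal coefficients $W_{ij}<0$. If some $v^i(x_0)=0$, then $x_0$ is a minimum, and the equation for the $p$-Laplacian flux $\phi_i=|v^i|^{p-2}v^i$ gives $\phi_i'(x_0)=W_i$, while the structure forces $v^j(x_0)=0$ for all $j$. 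This contradicts \eqref{eq. speed identity}, since $W>0$ inside $(a,b)$.

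\textbf{Asymptotics.} Near $-\mathbf{e}$ write $\mathbf{w}=\mathbf{u}+\mathbf{e}\ge0$ and $r=|\mathbf{w}|$. Then \eqref{eq. speed identity} together with (A3)–(A5) gives $|\mathbf{w}'|_{l_p}\approx c(\theta)\,r^{q_-/p}$, where $\theta=\mathbf{w}/r$. Monotonicity gives $r'\simeq|\mathbf{w}'|$, so $r'\sim r^{q_-/p}$. Integrating this ODE gives the three cases. If $q_-<p$, the reaching time is finite and $r\sim(x-a)^{p/(p-q_-)}$. If $q_-=p$, $r$ decays exponentially. If $q_->p$, $r\sim(-x)^{p/(p-q_-)}$.

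\textbf{Main obstacle.} The main difficulty is transferring these bounds to each component, i.e.\ $w^i\sim r$. For this we must exclude the direction $\theta$ degenerating to a face of the orthant. We would show that $w^i/r$ stays bounded below by analysing the equation for $w^i$: near the boundary face, (A4) gives $W_{\pm,i}<0$ there, because the negative mixed derivatives together with homogeneity imply that the partial derivative in a vanishing coordinate is strictly negative. This pushes $w^i$ up at rate comparable to $r^{q_--1}$, and comparison then gives $w^i\gtrsim r$.
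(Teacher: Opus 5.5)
Your route (Finsler geodesic, reparametrization by the speed identity, then monotonicity, then an ODE for the distance to the well) differs from the paper's. It has a genuine gap at the step everything downstream rests on: the weak monotonicity $u^i_x\geq 0$.

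The lattice inequality $J(\mathbf{u}\vee\mathbf{u}_\tau)+J(\mathbf{u}\wedge\mathbf{u}_\tau)\leq J(\mathbf{u})+J(\mathbf{u}_\tau)$ does follow from $D_{ij}W\leq 0$. But it only shows that $\mathbf{u}\vee\mathbf{u}_\tau$ is again a minimizer. To conclude $\mathbf{u}_\tau\geq\mathbf{u}$ you must exclude nontrivial contact between two ordered solutions, i.e.\ you need a strong comparison principle, and two things block it:
\begin{itemize}
\item For $p\neq 2$ the linearized system is not uniformly elliptic where some $u^j_x$ vanishes. For $p>2$ even uniqueness for the first-order system $\mathbf{u}'=A^{-1}(\Phi)$, $\Phi'=DW(\mathbf{u})$, $A(s)=|s|^{p-2}s$, can fail there.
\item When $q_\pm<p$, the two translates coincide on entire half-lines where both equal $\pm\mathbf{e}$ and $W$ need not be $C^2$.
\end{itemize}
Replacing $u^i$ by a ``monotone envelope'' gives no control of $\int W(\mathbf{u})$, since $W$ is not monotone in each variable. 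A genuine componentwise monotone rearrangement would need a multi-component rearrangement inequality for submodular integrands on $\mathbb{R}$ with different limits at $\pm\infty$, which you neither state nor prove.

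This is exactly the difficulty the paper's machinery is built for. The Weiss-type formulas (Lemma~\ref{lem. Weiss Monotonicity}) classify blow-up/down limits as $\mu(y)\mathbf{d}$ with $\mathbf{d}$ in the open orthant. This gives strict monotonicity near $a+$ and $b-$ (Corollary~\ref{cor3.1}), which confines the first touching point of the sliding argument to a compact set where $\mathbf{u}\neq\pm\mathbf{e}$. There, minimality of $\tau_0$ makes $\mathbf{u}$ pointwise increasing, and Lemma~\ref{le5.3} upgrades this to $u^i_x(x_0)>0$ for all $i$. That restores uniform ellipticity, so the strong maximum principle for cooperative systems applies.

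The paper also obtains its decay bounds (Lemmas~\ref{lem. lower bound of decay}--\ref{lem. (non-)existence of free boundary}) from minimality and the speed identity alone, which avoids circularity. Your ODE $r'\sim r^{q_-/p}$ uses monotonicity, so it inherits the gap. If monotonicity were supplied independently, your ODE route would bypass the Weiss analysis for this theorem, though not for Corollary~\ref{cor3.1}, which Theorem~\ref{thm. main} needs.

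Several other steps need repair.
\begin{itemize}
\item \emph{Sign on the faces.} Euler's identity for the $(q_--1)$-homogeneous $D_iW_-$ at $\mathbf{d}$ with $d^i=0$ gives $(q_--1)D_iW_-(\mathbf{d})=\sum_{j\neq i}D_{ij}W_-(\mathbf{d})d^j<0$. So $D_iW_-<0$ on that face only if $q_->1$. For $0<q_-<1$, which (A4) allows in the singular case, the sign is reversed and $w^i$ is $p$-subharmonic near the face. The mechanism must then be different: growth of $|w^i_x|^{p-1}$ like $\int\rho^{q_--1}$, rather than concavity.
\item \emph{Comparison.} Even for $q_->1$, ``comparison gives $w^i\gtrsim r$'' needs a quantitative argument. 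For example, integrate $\Delta_pw^i\leq -c\rho^{q_--1}$ over a maximal interval on which $w^i<\delta\rho$, and use $w^i_x\lesssim\rho^{q_-/p}$ to show its endpoints lie at comparable scales.
\item \emph{Choice of distance.} ``Monotonicity gives $r'\simeq|\mathbf{w}'|$'' is false for the Euclidean $r=|\mathbf{w}|$ unless the components are already comparable. Use $\rho=\sum_iw^i$ instead, for which $\rho'=|\mathbf{w}'|_{l_1}$; this is the paper's $D(y)$.
\item \emph{Existence.} A bound on $L$ does not bound the $l_p$-length of the curves, because the metric degenerates at $\pm\mathbf{e}$. Arzel\`a--Ascoli therefore needs a truncation near the wells, or, as in the paper, compactness in the $x$-parametrization with the balancing normalization of Step~4.
\item \emph{Linearized equation.} It is second order: $\big(|v^i|^{p-2}v^i\big)''=\sum_jW_{ij}(\mathbf{u})v^j$. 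With that correction your strict-positivity argument works: at a zero of $v^i\geq 0$, $\phi_i''(x_0)=\sum_{j\neq i}W_{ij}v^j(x_0)\geq 0$ forces $\mathbf{v}(x_0)=0$, contradicting the speed identity. It is simpler than Lemma~\ref{le5.3}, but it presupposes the global weak monotonicity that is missing.
\end{itemize}
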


The proof of Theorem~\ref{thm. basic} involves several substantial difficulties, mainly concerning the monotonicity and asymptotic behavior of heteroclinical solutions. The key obstruction arises from the lack of smoothness of the potential near its wells. Consequently, the classical dynamical system approach developed for smooth potentials, such as the one in \cite{AFN21}, cannot be directly applied to characterize the asymptotic behavior. Furthermore, when the homogeneity degree of the potential well is smaller than the diffusion exponent, namely $q_{\pm}<p$, the heteroclinical solution may reach the equilibrium states $\pm\mathbf{e}$ at finite points. In this case, the decay towards the potential wells exhibits an Alt-Phillips type free boundary behavior, as described in \cite{AP86}, which introduces additional difficulties in the analysis.

To overcome these difficulties, we develop a unified blow-up/down analysis based on three different types of Weiss-type monotonicity formulas, distinguished by the relation between the homogeneity exponents $q_{\pm}$ of the potential wells and the diffusion exponent $p$. These monotonicity formulas enable us to characterize all possible blow-up or blow-down profiles at the endpoints, covering both finite free boundary points and infinite asymptotic ends. Combining this classification with a delicate sliding argument, we establish the strict monotonicity of heteroclinical solutions.

We should specify that the Weiss-typed functions for regular and degenerate potential wells are new, as the ``free boundary points" are located at $\pm\infty$. The Weiss function for degenerate potential wells is a simple generalization of the singular one, while the one for regular potential wells takes a sightly different form, because the base number for the exponential decay rate is usually unclear.

Another essential difficulty arises in proving the monotonicity of each component, especially when applying the sliding method. The main obstruction is the loss of ellipticity of the linearized operator associated with the $p$-Laplacian when the gradient vanishes for $p\neq 2$. As a consequence, the standard maximum principle and strong maximum principle arguments cannot be directly applied. To overcome this difficulty, we first establish the monotonicity of $\mathbf{u}$ in the far field by means of the Weiss monotonicity formula (see also Corollary~\ref{cor3.1}). Then, using the speed identity \eqref{eq. speed identity}, we recover uniform ellipticity near the possible touching point arising in the sliding method, which allows us to complete the monotonicity argument.

\subsection{Motivation for the curvature estimate}
In \cite{S09}, by deforming the heteroclinical solution of \eqref{eq. GP equation} with $m=1$ and $p=2$, namely
\begin{equation}\label{eq. Allen-Cahn}
    \Delta u=W'(u)\quad\mbox{where }u(x)\in[-1,1],
\end{equation}
a family of test and barrier functions was constructed to investigate the curvature of the level sets of solutions to the Allen-Cahn equation. This construction led to an ABP estimate for the level sets. Based on this estimate, Savin applied the ``improvement of flatness'' method to classify asymptotically flat global solutions of \eqref{eq. Allen-Cahn}, providing a partial affirmative answer to the De Giorgi conjecture. See also \cite{SV05,VSS06} for analogous results concerning the $p$-Laplacian Allen-Cahn equation. More recently, another barrier function was introduced in \cite{SZ26} to establish density estimates for degenerate Allen-Cahn equations.

Motivated by these insights, we expect that a similar construction of test or barrier functions can be carried out in the Bose-Einstein setting. In particular, our goal is to construct a radially symmetric super-solution to \eqref{eq. GP equation} with $m\geq2$.

The main idea is to construct a vector-valued function $\mathbf{g}(t)$ near $t=0$ such that, for some sufficiently large $R$,
\begin{equation}\label{eq. radial barrier supersolution}
    \frac{d}{dt}\Big(|g^{i}_{t}|^{p-2}\cdot g^{i}_{t}\Big)+\frac{n-1}{R+t}\cdot|g^{i}_{t}|^{p-2}\cdot g^{i}_{t}\leq \nabla_{i}W(\mathbf{g}).
\end{equation}
Then, we consider the annulus
\begin{equation*}
    A_{R,r}=\{X\in\mathbb{R}^{n}:R-r\leq|X+R\vec{e}_{n}|\leq R+r\}.
\end{equation*}
Let $\phi(X):A_{R,r}\to[-\Lambda_{1},\Lambda_{1}]\times\cdots\times[-\Lambda_{m},\Lambda_{m}]$ be the following function for $X\in A_{R,r}$:
\begin{equation}\label{eq. rotate the barrier function}
    \phi(X)=\mathbf{g}(|X+R\vec{e}_{n}|-R).
\end{equation}
If $\mathbf{g}$ satisfies \eqref{eq. radial barrier supersolution} for $|t|\leq r$, then
\begin{equation*}
    \Delta_{p}\phi^{i}(X)\leq\nabla_{i}W(\phi(X)),\quad\mbox{for all }X\in A_{R,r}.
\end{equation*}

Throughout this paper, we use the notation $|\cdot|_{l_p}$ to denote the $l_p$-norm on $\mathbb{R}^m$:
\begin{equation*}
    |v|_{l_{p}}=\Big(\sum_{i=1}^{m}|v^{i}|^{p}\Big)^{1/p},\quad\mbox{where }v=(v^{1},\cdots,v^{m}).
\end{equation*}
The trajectory of a heteroclinical solution $\mathbf{u}$ is defined by
\[
\gamma(s)=\mathbf{u}(x(s)),
\]
where the parameter $s$ is chosen such that
\[
|\gamma_s|_{l_p}\equiv1.
\]
Thus, $\gamma$ is an $l_p$-arc-length re-parametrization of the trajectory of $\mathbf{u}$.

The function $\mathbf{g}(t)$ is expected to be a suitable re-parametrization of a heteroclinical solution $\mathbf{u}$ of \eqref{eq. GP equation}. In practice, $\mathbf{g}(t)$ is constructed as a deformation of the trajectory $\gamma(s)$ introduced above. More details are provided in Section~\ref{sec. casual talk}. 

From \eqref{eq. why curvature is important}, namely,
\begin{equation*}
    \mathcal{E}'(s)\cdot\gamma_s^i
    +p\,\mathcal{E}(s)\cdot\gamma_{ss}^i>0,
\end{equation*}
we observe that the curvature of the trajectory, represented by $\gamma_{ss}$, constitutes the main error term. Such a curvature term does not appear in the construction of test or barrier functions for the Allen-Cahn equation \eqref{eq. Allen-Cahn}, since the range of the solution, namely the interval $[-1,1]$, is one-dimensional and has zero curvature. However, in the Bose-Einstein setting with $m\geq2$, the trajectory of $\mathbf{u}$ generally has nonzero curvature, making the verification of \eqref{eq. why curvature is important} considerably more involved.

Roughly speaking, we need to establish the following two estimates:
\begin{itemize}
    \item $\gamma^{i}_{s}$ is comparable with $\gamma^{j}_{s}$ for $0\leq i,j\leq m$,
    \item $\gamma^{i}_{ss}$ can somehow be controlled by $\gamma^{i}_{s}$.
\end{itemize}
The precise meaning of ``is comparable with" and ``can somehow be controlled by" is presented in the following main result.
\begin{theorem}\label{thm. main}
    Assume that (A1)-(A5) hold for the potential function $W(\mathbf{u})$, and let $\mathbf{u}(x)$ be the heteroclinical solution to \eqref{eq. GP equation} constructed in Theorem~\ref{thm. basic}. Let $\gamma(s)$ denote the $l_{p}$-arc-length re-parametrization of $\mathbf{u}(x)$, such that
    \begin{equation*}
        \gamma(s)=\mathbf{u}(x(s)),\quad\mbox{where }\frac{dx}{ds}=\Big(|\mathbf{u}_{x}|_{l_{p}}\Big)^{-1}=\Big(\frac{p}{p-1}\cdot W(\mathbf{u})\Big)^{-1/p}.
    \end{equation*}
       After a translation, we may assume that $\gamma(s)$ is defined on $[s_-,s_+]$ and satisfies
    \begin{equation*}
        \gamma(s)\neq\pm\mathbf{e}\mbox{ for }s\in(s_{-},s_{+}),\quad\mbox{and }\lim_{s\to s_{\pm}}\gamma(s)=\pm\mathbf{e}.
    \end{equation*}
    Then the following estimates hold:
    \begin{itemize}
        \item[(1)] There exists a constant $0<C<\infty$, such that
        \begin{equation*}
            0<\gamma^{i}_{s}(s)\leq C\gamma^{j}_{s}(s),\quad\mbox{for all }s\in(s_{-},s_{+})\mbox{ and }1\leq i,j\leq m;
        \end{equation*}
        \item[(2)] As $s\to s_{\pm}$, one has
        \begin{equation*}
            \lim_{s\to s_{-}}\frac{|s-s_{-}|\cdot|\gamma_{ss}(s)|}{|\gamma_{s}(s)|}=0,\quad\mbox{and }\lim_{s\to s_{+}}\frac{|s-s_{+}|\cdot|\gamma_{ss}(s)|}{|\gamma_{s}(s)|}=0.
        \end{equation*}
    \end{itemize}
\end{theorem}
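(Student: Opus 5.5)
Both statements follow from one idea: near each potential well, the solution after rescaling converges to a homogeneous blow-up profile. That profile is identified by the Weiss-type monotonicity formulas used in the proof of Theorem~\ref{thm. basic}, and in the interior the claims follow from compactness. So I split $(s_-,s_+)$ into a compact middle piece $[s_-+\delta,s_+-\delta]$ and two end pieces, and treat only the end at $-\mathbf{e}$; the end at $+\mathbf{e}$ is symmetric.

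\textbf{Middle piece.} By Theorem~\ref{thm. basic}, every $u^i_x>0$ on $(a,b)$. Since $\gamma_s=\mathbf{u}_x/|\mathbf{u}_x|_{l_p}$ and $W(\mathbf{u})>0$ there, each $\gamma^i_s$ is continuous and positive on the compact set. Hence the ratios $\gamma^i_s/\gamma^j_s$ are bounded there. Part (2) concerns only the ends.

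\textbf{End piece: the blow-up profile.} Write $\mathbf{v}=\mathbf{u}+\mathbf{e}$. In all three regimes of Theorem~\ref{thm. basic}, the Weiss classification gives a limit profile. For $q_-\neq p$ it is $\mathbf{v}\approx c\,\boldsymbol\omega\,|x-a|^{\alpha_-}$ (with $a$ replaced by $-\infty$ and $|x-a|$ by $-x$ in the degenerate case). For $q_-=p$ it is $\mathbf{v}\approx c\,\boldsymbol\omega\,\theta_-^{x}$. In each case $\boldsymbol\omega\in(0,\infty)^m$ is a fixed direction determined by $W_-$: plugging the homogeneous ansatz into the system forces $\boldsymbol\omega$ to be a critical point of $W_-$ on a suitable $l_p$-type sphere, and the cooperative condition $D_{ij}W_-<0$ from (A4) keeps all components positive.

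The key claim is stronger than $C^0$ convergence. I need the rescaled derivatives $\mathbf{v}_x/|\mathbf{v}_x|_{l_p}$ to converge to $\boldsymbol\omega/|\boldsymbol\omega|_{l_p}$. This follows in two steps:
\begin{itemize}
\item The speed identity \eqref{eq. speed identity} makes $|\mathbf{u}_x|_{l_p}$ comparable to the scale. Together with $u^i_x>0$, this gives uniform ellipticity of the rescaled equations away from $0$.
\item The rescalings $\mathbf{v}_r(y)=\mathbf{v}(a+ry)/\mathbf{v}$-scale (or translations in the exponential case) therefore converge in $C^{1,\beta}$, and in fact $C^2$ on compact sets away from the singular point, since the equation gives $u_{xx}$ explicitly.
\end{itemize}
Since $\boldsymbol\omega$ has all components positive, the limiting unit tangent $\gamma_s(s_-^+)=\boldsymbol\omega/|\boldsymbol\omega|_{l_p}$ has all components positive. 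This gives (1) near the end, and so (1) holds globally.

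\textbf{Curvature (part (2)).} Near $s_-$ the trajectory is asymptotically a straight ray from $-\mathbf{e}$ in direction $\boldsymbol\omega$, so $|\gamma(s)+\mathbf{e}|\sim s-s_-$. Differentiating $\gamma_s=\mathbf{u}_x/|\mathbf{u}_x|_{l_p}$ gives $\gamma_{ss}=(dx/ds)\,\frac{d}{dx}\big(\mathbf{u}_x/|\mathbf{u}_x|_{l_p}\big)$. This quantity is invariant under the homogeneous scaling: in blow-up variables $|s-s_-|\,|\gamma_{ss}|$ equals the curvature of the rescaled trajectory at unit distance from the well. The $C^2$ convergence of rescalings to the straight-ray profile, whose curvature is zero, then gives
\[
\frac{|s-s_-|\,|\gamma_{ss}|}{|\gamma_s|}\to0 .
\]

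\textbf{Main obstacle.} The hard step is the uniqueness of the blow-up direction $\boldsymbol\omega$, i.e.\ convergence of the full family rather than of subsequences. Without it, the tangent could rotate slowly among several homogeneous profiles and the curvature limit could fail.

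In the singular and degenerate cases I would get this from the Weiss formula: its limit is constant along subsequences, and the critical points of $W_-$ on the sphere are isolated. The isolation comes from strict concavity-type consequences of $D_{ij}W_-<0$ via a Perron–Frobenius argument, which gives a unique positive critical direction.

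The regular case $q_-=p$ is more delicate, since the exponential base $\theta_-$ is not known a priori. Here I would use the modified Weiss function, the one adapted to exponential decay in Theorem~\ref{thm. basic}, to show that $\mathbf{v}_x/\mathbf{v}$ converges componentwise to $\log\theta_-$. Then the ratios $v^i/v^j$ converge, which pins down $\boldsymbol\omega$.

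Finally, the perturbation $E_-$ in (A3) is $C^2$ and vanishes at the origin. It therefore contributes only lower-order terms, which disappear in the limit.
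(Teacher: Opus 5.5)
\textbf{The gap.} The step you call the main obstacle, uniqueness of the blow-up direction, is not needed, and the argument you give for it is false. Perron--Frobenius yields a unique positive direction essentially only in linear settings such as $p=q_-=2$ with $W_-$ quadratic. For a general $q_-$-homogeneous $W_-$, the condition $D_{ij}W_-<0$ does not isolate the solutions of $D_iW_-(\mathbf d)=\lambda|d^i|^{p-2}d^i$.

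For example, take $m=2$, $p=2$, $1<q_-<2$, and $W_-=r^{q_-}g(\phi)$ in polar coordinates. Let $g>0$ be smooth and convex, with $g'(0)<0<g'(\pi/2)$, and $g$ constant on $[\pi/8,3\pi/8]$. Then
\[
D_{12}W_-=r^{q_--2}\Big[\sin\phi\cos\phi\,\big(q_-(q_--2)g-g''\big)+(q_--1)\cos(2\phi)\,g'\Big]<0
\]
on the closed quarter circle, so (A4) holds. Yet every direction in the flat arc gives a homogeneous solution $y^{2/(2-q_-)}\mathbf d$ of the limiting system, and all of these have the same Weiss energy. So neither isolation nor the value of the Weiss limit selects a direction. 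This is why Lemma~\ref{lem. homogeneity of the limit} explicitly allows $\mathbf d$ to depend on the sequence $T_k$.

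Your claim that the curvature limit could fail without uniqueness is also wrong. A tangent angle oscillating like $\sin\log\log\frac{1}{s-s_-}$ never converges, yet it still satisfies $|s-s_-|\,|\gamma_{ss}|\to0$.

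\textbf{The fix.} Argue by contradiction along sequences, exactly as in the paper's proof of Corollary~\ref{cor3.1}. Both $\gamma^i_s/\gamma^j_s$ and $|s-s_-|\,|\gamma_{ss}|$ are invariant under the blow-up rescaling. They are read off at $y=1$ (resp.\ $y=0$, $y=-1$).
\begin{itemize}
\item If (1) or (2) failed along some $s_k\to s_-$, blow up at $x(s_k)$ and extract a subsequence converging to some profile $\mu(y)\mathbf d$.
\item Every such $\mathbf d$ has all $d^i>0$, because (A4) rules out solutions of the algebraic system on the boundary of the orthant (Step~3 of the paper).
\item The limit is a straight ray, which contradicts the assumed failure.
\end{itemize}

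\textbf{A second correction.} The speed identity bounds only $|\mathbf u_x|_{l_p}$, i.e.\ the largest component, and $u^i_x>0$ is not uniform under rescaling. So the speed identity plus positivity does not give you uniform ellipticity of each rescaled component. For $p>2$, your $C^2$ convergence needs a lower bound on each $\nabla_y v^i_{T_k}$ near $y=1$. That bound has to come after the $C^{1,\epsilon}$ convergence to a profile with $d^i>0$, not before it. With these two repairs, your argument proves the theorem.

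\textbf{Comparison with the paper.} Your repaired route reads part (2) off second-order convergence to a straight ray. The paper never needs $C^2$ convergence. It combines the identity \eqref{eq. how to calculate the magnetude of the curvature} with the $p$-orthogonality \eqref{eq. p-perpendicular}. Since $\gamma_{ss}$ is orthogonal to $(|\gamma^i_s|^{p-2}\gamma^i_s)_i$, and that vector makes an angle $\Theta\to0$ with $DW(\mathbf u)$ by the first-order Corollary~\ref{cor3.1}, one gets
\[
|\gamma_{ss}|\lesssim\frac{|DW(\mathbf u)|}{W(\mathbf u)}\sin\Theta=o\big((s-s_-)^{-1}\big).
\]
The paper's route is lighter on regularity; yours is the more generic geometric argument.
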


Part (1) of Theorem~\ref{thm. main} is a direct consequence of the monotonicity result established in Lemma \ref{le5.4} together with Corollary~\ref{cor3.1}. In particular, the first estimate implies that the components of the tangent vector of the heteroclinic trajectory remain uniformly comparable near the potential wells, which reflects the ``multiplicity one" structure of the interface. Part (2) follows from \eqref{eq. how to calculate the magnetude of the curvature}, which provides an indirect characterization of the curvature of the trajectory. The resulting estimate shows that the curvature becomes negligible compared with the tangent vector at the scale of the distance to the endpoint, indicating the asymptotic flatness of the heteroclinic trajectory near the wells. This asymptotic flatness property is expected to play an essential role in the construction of barrier functions and in the analysis of higher-dimensional interfaces.

The paper is organized as follows:
\begin{itemize}
    \item In Section~2, we prove the existence of heteroclinical solutions as well as the speed identity.
    \item In Section~3, we establish the fundamental qualitative properties, especially basic decay estimates and asymptotic homogeneity near the endpoints.
    \item Section~4 is devoted to the asymptotic analysis of heteroclinical solutions near the potential wells. We investigate all types of potential wells and characterize the corresponding asymptotic behavior.
    \item Section~5 is devoted to the monotonicity in each component for the heteroclinical solution.
    \item In Section~6, we study the geometric properties of the heteroclinic trajectories. By introducing an appropriate $l_p$-arc-length parametrization, we derive estimates for the tangent vectors and curvature of the trajectories, which describe their asymptotic flatness near the potential wells.
    \item Section~7 is devoted to further analysis and applications of these estimates, including the construction of suitable barrier functions and related consequences.
\end{itemize}

\section{Existence}
In this section, we prove the existence part of Theorem~\ref{thm. basic} and establish the identity \eqref{eq. speed identity}. We first state the existence result.

\begin{lemma}[Existence]\label{lem. existence}
    Assume that (A1)-(A5) hold. Then, there exists at least one heteroclinical solution $\mathbf{u}$ to \eqref{eq. GP equation}. Moreover, \eqref{eq. speed identity} holds for $\mathbf{u}$.
\end{lemma}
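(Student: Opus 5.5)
The plan is to formulate existence as a minimal-geodesic problem for a degenerate Finsler metric on the box $Q=[-\Lambda_1,\Lambda_1]\times\cdots\times[-\Lambda_m,\Lambda_m]$, and then reparametrize the minimizing curve so that it solves \eqref{eq. GP equation} together with \eqref{eq. speed identity}. The starting point is a Young-type inequality for the Lagrangian. For a curve $\mathbf{u}$,
\begin{equation*}
\frac1p|\mathbf{u}'|_{l_p}^p+W(\mathbf{u})\ \ge\ c_p\,W(\mathbf{u})^{(p-1)/p}\,|\mathbf{u}'|_{l_p},
\end{equation*}
where $c_p=\frac{p}{p-1}\cdot\big(\frac{p-1}{p}\big)^{1/p}\cdot\ldots$ is the explicit constant, computed by minimizing $\lambda\mapsto\frac{\lambda^p}{p}a^p+\frac{W}{\lambda}$. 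Equality holds exactly when $|\mathbf{u}'|_{l_p}^p=\frac{p}{p-1}W(\mathbf{u})$. Hence $J(\mathbf{u},\mathbb{R})\ge c_p L(\gamma)$, where
\begin{equation*}
L(\gamma)=\int W(\gamma)^{(p-1)/p}\,|d\gamma|_{l_p}
\end{equation*}
is a parametrization-invariant length functional on curves in $Q$ joining $-\mathbf{e}$ to $\mathbf{e}$.

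Step one is to minimize $L$ over Lipschitz curves $\gamma:[0,1]\to Q$ with $\gamma(0)=-\mathbf{e}$ and $\gamma(1)=\mathbf{e}$, parametrized with constant $l_p$-speed. Since $Q$ is compact, the Euclidean length of a minimizing sequence has to be controlled. I will do this by noting that $W$ is bounded below by a positive constant away from $\pm\mathbf{e}$, so $L$ controls Euclidean length there. Near the wells, assumption (A3)–(A4) gives $W\sim|\mathbf{u}\mp\mathbf{e}|^{q_\pm}$, so each minimizing curve can be replaced near a well by a straight radial segment, which has finite and smaller weighted length. Arzelà–Ascoli and the lower semicontinuity of $L$ then produce a minimizer $\gamma$. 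Cutting out loops and replacing any interior visit to $\pm\mathbf{e}$ by a direct jump (which costs nothing, since $W=0$ there) makes $\{\gamma\neq\pm\mathbf{e}\}$ an interval. This gives condition (3) of Definition~\ref{def. heteroclinical}.

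Step two is the reparametrization. On the open arc where $\gamma\neq\pm\mathbf{e}$, I take $\gamma$ with unit $l_p$-speed in $s$ and define $x(s)$ by
\begin{equation*}
\frac{dx}{ds}=\Big(\frac{p}{p-1}W(\gamma(s))\Big)^{-1/p},
\end{equation*}
and set $\mathbf{u}(x)=\gamma(s(x))$, extended by $\pm\mathbf{e}$ outside. Then \eqref{eq. speed identity} holds by construction, equality holds in the Young inequality, and $\mathbf{u}$ minimizes $J$ among competitors with the same endpoints. Whether the domain endpoints $a,b$ are finite depends on the integrability of $W^{-1/p}$ near the wells, which is exactly the dichotomy $q_\pm<p$ versus $q_\pm\ge p$.

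Step three is to derive the Euler–Lagrange system \eqref{eq. GP equation} in the classical sense. Compactly supported variations of $\mathbf{u}$ on $(a,b)$ show that $\mathbf{u}$ is a weak solution there. Conservation of energy is built in through \eqref{eq. speed identity}, and $\mathbf{u}$ is an inner variation critical point as well. On $(a,b)$ we have $|\mathbf{u}'|_{l_p}>0$, but individual components $u^i_x$ may vanish. The weak equation says $|u^i_x|^{p-2}u^i_x$ is $C^1$ with derivative $W_i$, which is the classical sense of \eqref{eq. GP equation} in divergence form. At the finite endpoints $a,b$ (when $q_\pm<p$), both $\mathbf{u}'\to0$ and $DW\to0$. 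Checking that the glued function is still a solution across $a$ requires showing that $|u^i_x|^{p-2}u^i_x\to0$, and this follows from \eqref{eq. speed identity}.

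The main obstacle I expect is the compactness and regularity of the minimizing geodesic under a metric that degenerates at the wells and may be non-smooth there. The specific dangers are that a minimizing sequence could accumulate infinite length near $\pm\mathbf{e}$, or that the minimizer could run along $\partial Q$, where the equation could fail through a constraint force. For the boundary issue, I would try first to use the sign condition $D_{ij}W<0$ from (A1) and (A4) to show that projecting a curve onto a smaller box does not increase $W$ along relevant directions, so that the minimizer stays in the interior except at its endpoints.
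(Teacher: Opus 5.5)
Your route is genuinely different from the paper's. The paper never takes a limit of curves. It reparametrizes each $\gamma_j$ of a minimizing sequence through \eqref{eq. speed identity}, so the $\mathbf{u}_j$ are uniformly Lipschitz on $\mathbb{R}$ automatically. It pins them at the half-energy point and passes to the limit in $x$. It then recovers minimality of $J$ by a competitor argument against $\sigma$, and uses the balancing estimate and an energy count for the limits at $\pm\infty$ and for connectedness. Your order (minimizing geodesic first, then reparametrize) has a real advantage: \eqref{eq. speed identity} and minimality of $J$ on every bounded interval follow at once from equality in Young's inequality and minimality of subarcs, with no energy identity to pass to the limit.

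\textbf{First gap: compactness near the wells.} The price of your order is compactness of curves near the degenerate wells, and there your argument fails as written. The claim that a straight radial segment has smaller weighted length than the piece it replaces is false in general. Take $m=2$, $p=q_-=2$, $W_-(\mathbf{v})=(v^1)^2+100(v^2)^2-2\epsilon v^1v^2$ with small $\epsilon>0$, and the point of $\partial B_r(-\mathbf{e})$ at angle $0.3$ from the $v^1$-axis. Reaching it along the radius costs about $1.55\,r^2$. Going out along the $v^1$-axis and then along the circle costs about $1.06\,r^2$.

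What is true is that the replacement costs at most $Cr^{1+q_-(p-1)/p}$. At a fixed scale $r$, this only yields a limit curve of weighted length at most $\sigma+Cr^{1+q_-(p-1)/p}$. Sending $r\to0$ loses the Euclidean length bound: inside $B_r(\pm\mathbf{e})$ the weight is at most of order $r^{q_\pm(p-1)/p}$ and degenerates, so a bound on weighted length gives no uniform bound on Euclidean length. Hence the equi-Lipschitz bound needed for constant-$l_p$-speed Arzel\`a--Ascoli is not established.

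The standard repair is to parametrize by weighted arc length. The curves are then equi-Lipschitz for the degenerate distance $d_W$ induced by your weighted length. Since $W$ vanishes only at $\pm\mathbf{e}$, $d_W$ induces the Euclidean topology on the compact box, so Arzel\`a--Ascoli and lower semicontinuity of metric length give a minimizer. That minimizer is locally Euclidean-Lipschitz away from the wells, which is all your Step two uses. Alternatively, use that near-minimizers are near-geodesic on every subarc to get dyadic Euclidean bounds, and straighten only below a scale $\rho_j\to0$ tied to the defect $L(\gamma_j)-\sigma$.

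\textbf{Second gap: the box constraint.} You flag this but leave it open. Competitors must take values in $[-\Lambda_1,\Lambda_1]\times\cdots\times[-\Lambda_m,\Lambda_m]$, so where $\mathbf{u}$ touches a face only one-sided variations are allowed. That gives a variational inequality, not \eqref{eq. GP equation}. Projecting onto a smaller box does not produce an admissible competitor, because it moves the wells themselves. The paper also passes to the Euler--Lagrange system here without comment.

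A local argument closes the gap. Suppose $u^i(x_0)=-\Lambda_i$ with $\mathbf{u}(x_0)\neq\pm\mathbf{e}$. Upward variations of $u^i$ give $\frac{d}{dx}\big(|u^i_x|^{p-2}u^i_x\big)\le W_i(\mathbf{u})$ weakly near $x_0$. Integrating $D_iW$ along the segment from $-\mathbf{e}$, as in Lemma~\ref{lem. boundary stability of pm e}, gives $W_i(\mathbf{u}(x_0))<0$ by the cooperative sign condition. So $u^i$ is strictly concave near $x_0$, which contradicts $x_0$ being a minimum point of $u^i$. The faces $\{u^i=\Lambda_i\}$ are handled symmetrically.

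\textbf{A smaller point.} Your gluing across a finite endpoint uses $DW(\pm\mathbf{e})=0$. This requires $q_\pm>1$, since $DW_\pm$ is homogeneous of degree $q_\pm-1$. The paper relies on the same condition implicitly, so that $\pm\mathbf{e}$ solve \eqref{eq. GP equation} on the flat pieces.
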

\begin{proof}
    \textbf{Step 1: Construction of a minimizing sequence of curves.} 
    We introduce the following class of admissible curves:
    \begin{equation*}
        \mathcal{C}:=\Big\{\gamma\in C^{\infty}([-1,1]):\gamma^{-1}(\{\pm\mathbf{e}\})=\{\pm1\},\ \inf_{t\in[-1,1]}|\gamma'(t)|_{l_{p}}>0\Big\}.
    \end{equation*}
     For $\gamma\in\mathcal{C}$, we consider the non-negative functional
    \begin{equation*}
        \mathcal{F}(\gamma)=\int_{-1}^{1}\Big(\frac{p}{p-1}\cdot W(\gamma(t))\Big)^{\frac{p-1}{p}}\cdot|\gamma'(t)|_{l_{p}}dt.
    \end{equation*}
    It is straightforward to verify that $\mathcal{F}(t\cdot\mathbf{e})<\infty$ for $t\cdot\mathbf{e}\in\mathcal{C}$.  Hence, there exists a minimizing
    sequence $\{\gamma_j\}_{j=1}^{\infty}\subset\mathcal{C}$ such that
    \begin{equation*}
        \lim_{j\to\infty}\mathcal{F}(\gamma_{j})=\sigma:=\inf_{\gamma\in\mathcal{C}}\mathcal{F}(\gamma).
    \end{equation*}
    By a suitable re-parametrization (still $\gamma_{j}\in\mathcal{C}$), we may assume that:
    \begin{equation*}
        |\gamma_{j}'(t)|_{l_{p}}\equiv\mu_{j}\mbox{ for all }t\in[-1,1].
    \end{equation*}
     Next, for each $j$, let $t_j\in(-1,1)$ be the unique point satisfying
    \begin{equation*}
        \int_{-1}^{t_{j}}\Big(\frac{p}{p-1}\cdot W(\gamma_{j}(t))\Big)^{\frac{p-1}{p}}\cdot|\gamma_{j}'(t)|_{l_{p}}dt=\int_{t_{j}}^{1}\Big(\frac{p}{p-1}\cdot W(\gamma_{j}(t))\Big)^{\frac{p-1}{p}}\cdot|\gamma_{j}'(t)|_{l_{p}}dt=\frac{1}{2}\mathcal{F}(\gamma_{j}).
    \end{equation*}
    
    \textbf{Step 2: Re-parametrization.} 
    For each $j$, we define a re-parametrized curve
\[
\mathbf{u}_j(x)=\gamma_j(t),
\]
where $x=x(t)$ is determined by the following ODE problem on $\mathbb{R}$:
    \begin{equation*}
        \left\{\begin{aligned}
            &x'(t)=\mu_{j}\cdot\Big(\frac{p}{p-1}\cdot W(\gamma_{j}(t))\Big)^{-\frac{1}{p}},\\
            &x(t_{j})=0.
        \end{aligned}\right.
    \end{equation*}
    By the chain rule and the definition of the re-parametrization, we obtain
the identity  for all $j$:
    \begin{equation*}
        |\nabla\mathbf{u}_{j}(x)|_{l_{p}}=|\mathbf{u}_{j}'(x)|_{l_{p}}=\Big(\frac{p}{p-1}\cdot W(\mathbf{u}_{j}(x))\Big)^{\frac{1}{p}},\quad\mbox{i.e.,}\sup_{j}\sup_{x\in\mathbb{R}}|\nabla\mathbf{u}_{j}(x)|_{l_{p}}<\infty.
    \end{equation*}
    This also implies the following important identity:
    \begin{equation}\label{eq. speed identity for the sequence}
        \frac{1}{p}\sum_{i=1}^{m}|\frac{d}{dx}u_{j}^{i}|^{p}+W(\mathbf{u}_{j}(x))=\Big(\frac{p}{p-1}\cdot W(\mathbf{u}_{j}(x))\Big)^{\frac{p-1}{p}}\cdot|\mathbf{u}_{j}'(x)|_{l_{p}}.
    \end{equation}
   Finally, another application of the chain rule gives 
    \begin{equation*}
        J(\mathbf{u}_{j},\mathbb{R})=\mathcal{G}(\mathbf{u}_{j})=\mathcal{F}(\gamma_{j}),
    \end{equation*}
    where
    \begin{equation*}
        \mathcal{G}(\mathbf{u}_{j}):=\int_{-\infty}^{+\infty}\Big(\frac{p}{p-1}\cdot W(\mathbf{u}_{j}(x))\Big)^{\frac{p-1}{p}}\cdot|\mathbf{u}_{j}'(x)|_{l_{p}}dx.
    \end{equation*}

    \textbf{Step 3: Passing to the limit.} Since 
    \[
    \sup_j\sup_{x\in\mathbb{R}}
    |\nabla\mathbf{u}_j(x)|_{l_p}<\infty,
\]
the sequence $\{\mathbf{u}_j\}$ is uniformly Lipschitz continuous on
every compact subset of $\mathbb{R}$. Therefore, by the
Arzel\`a-Ascoli theorem and a diagonal argument, after passing to a
subsequence (still denoted by $\mathbf{u}_j$), there exists a function
$\mathbf{u}\in C^{0,1}(\mathbb{R})$ such that
    \begin{itemize}
        \item $\mathbf{u}_{j}\to\mathbf{u}$ in $C^{\alpha}_{loc}(\mathbb{R})$ for all $\alpha\in(0,1)$;
        \item $\mathbf{u}_{j}\to\mathbf{u}$ in $W^{1,\beta}_{loc}(\mathbb{R})$ for all $\beta\in(1,+\infty)$;
        \item $\mathbf{u}_{j}\rightharpoonup\mathbf{u}$ in $W^{1,p}(\mathbb{R})$.
    \end{itemize}
    The convergence in
$C^\alpha_{\mathrm{loc}}(\mathbb{R})\cap W^{1,\beta}_{\mathrm{loc}}(\mathbb{R})$,
together with \eqref{eq. speed identity for the sequence}, implies that,
for every fixed $L>0$,
    \begin{align*}
        \int_{-L}^{L}\Big\{\frac{1}{p}\sum_{i=1}^{m}|\frac{d}{dx}u_{j}^{i}|^{p}+W(\mathbf{u}_{j}(x))\Big\}dx\to&\int_{-L}^{L}\Big\{\frac{1}{p}\sum_{i=1}^{m}|\frac{d}{dx}u^{i}|^{p}+W(\mathbf{u}(x))\Big\}dx,\\
        \int_{-L}^{L}\Big(\frac{p}{p-1}\cdot W(\mathbf{u}_{j}(x))\Big)^{\frac{p-1}{p}}\cdot|\mathbf{u}_{j}'(x)|_{l_{p}}dx\to&\int_{-L}^{L}\Big(\frac{p}{p-1}\cdot W(\mathbf{u}(x))\Big)^{\frac{p-1}{p}}\cdot|\mathbf{u}'(x)|_{l_{p}}dx.
    \end{align*}
    In particular, for all $L>0$, we have
    \begin{equation*}
        \int_{-L}^{L}\Big\{\frac{1}{p}\sum_{i=1}^{m}|\frac{d}{dx}u^{i}|^{p}+W(\mathbf{u}(x))\Big\}dx=\int_{-L}^{L}\Big(\frac{p}{p-1}\cdot W(\mathbf{u}(x))\Big)^{\frac{p-1}{p}}\cdot|\mathbf{u}'(x)|_{l_{p}}dx,
    \end{equation*}
    which implies that (using the equality condition of the Young's inequality):
    \begin{equation}\label{eq. speed identity almost everywhere}
        |\mathbf{u}'(x)|_{l_{p}}=\sqrt[p]{\frac{p}{p-1}\cdot W(\mathbf{u}(x))},\quad\mbox{for almost every } x\in\mathbb{R}.
    \end{equation}
   Equivalently, \eqref{eq. speed identity} holds almost everywhere.

    \textbf{Step 4: $\mathbf{u}(0)\neq\pm\mathbf{e}$.} It suffices to prove the following stronger estimate:
    \begin{equation}\label{eq. u_j's are all balanced}
        \inf_{j}\inf_{x\geq0}|\mathbf{u}_{j}(x)+\mathbf{e}|>0,\quad\mbox{and }\inf_{j}\inf_{x\leq0}|\mathbf{u}_{j}(x)-\mathbf{e}|>0.
    \end{equation}
    Suppose by contradiction that the first inequality of \eqref{eq. u_j's are all balanced} fails, then $u_{j^{*}}(x^{*})$ is sufficiently close to $-\mathbf{e}$ for some $(j^{*},x^{*})\in\mathbb{Z}_{+}\times\mathbb{R}_{+}$. Let $t^{*}\in[-1,1]$ satisfy 
    $$\gamma_{j^{*}}(t^{*})=\mathbf{u}_{j^{*}}(x^{*}).$$ Since $x^{*}\geq0$, we must have $t^{*}\geq t_{j^{*}}$. Then,
    \begin{equation*}
        \int_{t^{*}}^{1}\Big(\frac{p}{p-1}\cdot W(\gamma_{j^{*}}(t))\Big)^{\frac{p-1}{p}}\cdot|\gamma_{j^{*}}'(t)|_{l_{p}}dt\leq\frac{\mathcal{F}(\gamma_{j^{*}})}{2}\leq\frac{\sigma}{2}+o(1).
    \end{equation*}
    Using this estimate, we can construct a curve $\widetilde{\gamma}_{j^*}\in\mathcal{C}$ such that
    \begin{itemize}
        \item $\widetilde{\gamma}_{j^{*}}(t)\equiv\gamma_{j^{*}}(t)$ for $t\in[t^{*},1]$;
        \item $\displaystyle\int_{-1}^{t^{*}}\Big(\frac{p}{p-1}\cdot W(\widetilde{\gamma}_{j^{*}}(t))\Big)^{\frac{p-1}{p}}\cdot|\widetilde{\gamma}_{j^{*}}'(t)|_{l_{p}}dt$ is sufficiently small.
    \end{itemize}
    This violates the minimality of $\sigma$, as $\displaystyle\sigma:=\inf_{\gamma\in\mathcal{C}}\mathcal{F}(\gamma)\leq\frac{\sigma}{2}+o(1)$. Then, \eqref{eq. u_j's are all balanced} is verified.

    \textbf{Step 5: $\mathbf{u}$ is energy minimizing.} Suppose on the contrary that $$\mathbf{v}:\mathbb{R}\to[-\Lambda_{1},\Lambda_{1}]\times\cdots\times[-\Lambda_{m},\Lambda_{m}]$$ is a $C^{0,1}(\mathbb{R})$ function satisfying:
    \begin{itemize}
        \item $\mathbf{v}(x)=\mathbf{u}(x)$ for all $|x|\geq L$ for some $L\geq100$;
        \item $J(\mathbf{v},[-L,L])\leq J(\mathbf{u},[-L,L])-\delta$ for some $\delta>0$.
    \end{itemize}
    Applying Young's inequality to $\mathbf{v}$ and using \eqref{eq. speed identity almost everywhere}, we obtain
    \begin{equation*}
        \int_{-L}^{L}\Big(\frac{p}{p-1}\cdot W(\mathbf{v}(x))\Big)^{\frac{p-1}{p}}\cdot|\mathbf{v}'(x)|_{l_{p}}dx\leq\int_{-L}^{L}\Big(\frac{p}{p-1}\cdot W(\mathbf{u}(x))\Big)^{\frac{p-1}{p}}\cdot|\mathbf{u}'(x)|_{l_{p}}dx-\delta.
    \end{equation*}
    Then, we can replace $\mathbf{u}_{j}$ with a competitor $\mathbf{w}_{j}$, such that:
    \begin{itemize}
        \item $\mathbf{w}_{j}(x)\equiv\mathbf{u}_{j}(x)$ for all $|x|\geq L$;
        \item $\mathbf{w}_{j}(x)=\mathbf{v}(\frac{L}{L-1}x)$ for all $|x|\leq L-1$;
        \item $\mathbf{w}_{j}(x)$ is affine for $x\in[L-1,L]$ and for $x\in[-L,1-L]$.
    \end{itemize}

Since
\[
\mathbf u_j\to\mathbf u
\quad\mbox{in }
C^\alpha_{\mathrm{loc}}(\mathbb R)
\cap
W^{1,\beta}_{\mathrm{loc}}(\mathbb R),
\]
for sufficiently large $j$ we obtain
\begin{equation*}
    \mathcal{G}(\mathbf w_j)
    \leq
    \mathcal{G}(\mathbf u_j)-\frac{\delta}{2}
    \leq
    \sigma-\frac{\delta}{4}.
\end{equation*}
After re-parametrizing $\mathbf w_j$, we obtain a sequence
$\widetilde{\gamma}_j\in\mathcal C$ satisfying
\[
    \mathcal{F}(\widetilde{\gamma}_j)
    \leq
    \sigma-\frac{\delta}{4},
\]
which contradicts the definition of $\sigma$.

Therefore, $\mathbf u$ is energy minimizing on all bounded intervals.
In particular, $\mathbf u$ satisfies the Euler-Lagrange equation and is
therefore a weak solution (and hence a classical solution) of
\eqref{eq. GP equation} in $\mathbb R$. As a consequence,  the ``almost everywhere identity" \eqref{eq. speed identity almost everywhere} is improved and it becomes \eqref{eq. speed identity}.

    \textbf{Step 6: $\mathbf{u}(\pm\infty)=\pm\mathbf{e}$.} 
    By the weak convergence of $\mathbf{u}_j$ in $W^{1,p}(\mathbb{R})$,
together with \eqref{eq. speed identity for the sequence} and
\eqref{eq. speed identity almost everywhere}, we obtain the following
energy estimate:
    \begin{equation*}
        J(\mathbf{u},\mathbb{R})=\int_{-\infty}^{\infty}\sum_{i=1}^{m}|\frac{d}{dx}u^{i}|^{p}dx\leq\liminf_{j\to\infty}\int_{-\infty}^{\infty}\sum_{i=1}^{m}|\frac{d}{dx}u_{j}^{i}|^{p}dx=\liminf_{j\to\infty}J(\mathbf{u}_{j},\mathbb{R})=\sigma.
    \end{equation*}
    
    From the $C^{1,\epsilon}$ estimate, we know that $|\mathbf{u}'(x)|_{l_{p}}$ is globally bounded from above. Then, for all $x\in\mathbb{R}$ with $\mathbf{u}(x)\in\Big(B_{\epsilon}(-\mathbf{e})\cup B_{\epsilon}(\mathbf{e})\Big)^{c}$ ($\epsilon>0$ is arbitrary), we have 
    \[
W(\mathbf{u}(y))\geq c(\epsilon)
\quad\mbox{for }y\in[x-c(\epsilon),x+c(\epsilon)],
\]
 where $c(\epsilon)>0$ is independent of $x$. Therefore, using the fact that $J(\mathbf{u},\mathbb{R})\leq\sigma<\infty$, we see that for each $\epsilon>0$, the preimage of $\Big(B_{\epsilon}(-\mathbf{e})\cup B_{\epsilon}(\mathbf{e})\Big)^{c}$ under $\mathbf{u}$ must be bounded. Using the $C^{1,\epsilon}$ estimate again (to avoid a sudden change of $\mathbf{u}$ between $\pm\mathbf{e}$), we have that
    \begin{equation*}
        \lim_{x\to-\infty}\mathbf{u}(x)\in\{-\mathbf{e},\mathbf{e}\},\quad\mbox{and }\lim_{x\to+\infty}\mathbf{u}(x)\in\{-\mathbf{e},\mathbf{e}\}.
    \end{equation*}
    With the help of \eqref{eq. u_j's are all balanced}, we see that the only possibility is that $\mathbf{u}(\pm\infty)=\pm\mathbf{e}$.

    \textbf{Step 7: $\{\mathbf{u}\neq\pm\mathbf{e}\}$ is connected.} We already know that $\mathbf{u}(\pm\infty)=\pm\mathbf{e}$, then there exists at least one interval $(a,b)$ with $a,b\in[-\infty,+\infty]$, such that:
    \begin{equation*}
        \mathbf{u}(a+)=-\mathbf{e},\quad\mathbf{u}(b-)=\mathbf{e},\quad\mbox{and }\mathbf{u}(x)\neq\pm\mathbf{e}\mbox{ in }(a,b).
    \end{equation*}
    By Young's inequality and by the minimality of $\sigma$, we must have
    \begin{equation*}
        J(\mathbf{u},(a,b))\geq\int_{a}^{b}\Big(\frac{p}{p-1}\cdot W(\mathbf{u}(x))\Big)^{\frac{p-1}{p}}\cdot|\mathbf{u}'(x)|_{l_{p}}dx\geq\sigma.
    \end{equation*}
    As $J(\mathbf{u},\mathbb{R})\leq\sigma$, we see that $\mathbf{u}\in\{-\mathbf{e},\mathbf{e}\}$ in $\mathbb{R}\setminus(a,b)$, so $\{\mathbf{u}\neq\pm\mathbf{e}\}$ is connected.

    \textbf{Ending.} We have constructed the function $\mathbf{u}$ and verified all the
required properties. This completes the proof of
Lemma~\ref{lem. existence}.
\end{proof}

\section{Decay rate estimate}
\subsection{Existence and non-existence of free boundary}
In this section, we prove the decay estimate in Theorem~\ref{thm. basic} under assumptions (A1)-(A5). Our analysis relies on the energy minimizing property and the speed identity of the heteroclinical solution constructed in Lemma~\ref{lem. existence}.

We first establish an important lower bound estimate as $\mathbf{u}$ approaches $\pm\mathbf{e}$.
\begin{lemma}\label{lem. lower bound of decay}
    Let $\mathbf{u}$ be the solution constructed in Lemma~\ref{lem. existence}. Assume that 
    \[
        |\mathbf{u}(x)+\mathbf{e}|=h
    \]
    for some sufficiently small $h>0$. Then there exists a small constant
    $c_{1}=c_{1}(p,q_{-})>0$ such that
    \begin{equation*}
        |\mathbf{u}(y)+\mathbf{e}|\geq c_{1}h,\quad\mbox{for every }x-c_{1}h^{\frac{p-q_{-}}{p}}\leq y\leq x+c_{1}h^{\frac{p-q_{-}}{p}}.
    \end{equation*}
\end{lemma}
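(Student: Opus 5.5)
The lemma follows from the speed identity \eqref{eq. speed identity} combined with the homogeneity of $W$ near $-\mathbf{e}$ given by (A3)--(A5). The idea is a Lipschitz-type bound: as long as $\mathbf{u}$ stays in a ball of radius comparable to $h$ around $-\mathbf{e}$, its speed is at most of order $h^{q_-/p}$. It therefore needs time of order $h^{1-q_-/p}=h^{(p-q_-)/p}$ to travel a distance of order $h$.

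\textbf{Step 1: upper bound on $W$ near $-\mathbf{e}$.} From (A3)--(A5), for $\mathbf{v}$ close to $-\mathbf{e}$ we have $W(\mathbf{v})=W_-(\mathbf{v}+\mathbf{e})(1+E_-(\mathbf{v}+\mathbf{e}))$. Here $W_-$ is continuous and homogeneous of degree $q_-$ on $[0,\infty)^m$, and $|E_-|\le 1$ near the origin. Hence there is $C_0$ with
\begin{equation*}
W(\mathbf{v})\le C_0|\mathbf{v}+\mathbf{e}|^{q_-}\qquad\mbox{whenever }|\mathbf{v}+\mathbf{e}|\le 2h,\ h\mbox{ small}.
\end{equation*}
One technicality: (A3) writes $W_-(\mathbf{u}+\mathbf{e})$ with $W_-$ defined only on the positive cone. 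This is automatic, since $\mathbf{u}$ takes values in the box, so $\mathbf{u}+\mathbf{e}\in[0,\infty)^m$.

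\textbf{Step 2: continuity/bootstrap argument.} Set $\rho(y)=|\mathbf{u}(y)+\mathbf{e}|$, a Lipschitz function. By the speed identity and the equivalence of $l_p$ and Euclidean norms on $\mathbb{R}^m$, at every $y$ with $\rho(y)\le 2h$ we get
\begin{equation*}
|\rho'(y)|\le|\mathbf{u}'(y)|\le C_m|\mathbf{u}'(y)|_{l_p}\le C_1\rho(y)^{q_-/p}\le C_1(2h)^{q_-/p}.
\end{equation*}
Let $I$ be the maximal interval containing $x$ on which $\rho\le 2h$. For $y\in I$ with $|y-x|\le c_1h^{(p-q_-)/p}$, integrating gives
\begin{equation*}
|\rho(y)-h|\le C_1 2^{q_-/p}c_1h.
\end{equation*}
Choosing $c_1$ so that $C_12^{q_-/p}c_1\le 1/2$ yields $h/2\le\rho(y)\le 3h/2$. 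The upper bound $3h/2<2h$ shows that $\rho$ cannot reach $2h$ inside the window $|y-x|\le c_1h^{(p-q_-)/p}$. So the whole window lies in $I$, and there $\rho\ge h/2\ge c_1h$, after also taking $c_1\le 1/2$.

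\textbf{Main obstacle.} The only delicate point is the bootstrap itself. The pointwise speed bound is available only while $\mathbf{u}$ stays near $-\mathbf{e}$, so the argument must work with the maximal interval $I$ rather than assume a priori that $\rho$ stays small. The constant $c_1$ depends on $p$, $q_-$, $m$ and the bounds on $W_-$ and $E_-$; the statement's notation $c_1(p,q_-)$ suppresses the dependence on $W$ and $m$. Note that the argument gives the two-sided comparability $\rho\sim h$, not just the lower bound. It also does not use minimality, only \eqref{eq. speed identity}. It is valid for every $q_->0$, since $h^{(p-q_-)/p}$ is simply a large time scale when $q_->p$ and equals $1$ when $q_-=p$.
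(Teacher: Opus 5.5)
Your proof is correct and uses the same two inputs as the paper's proof: the speed identity \eqref{eq. speed identity} and the upper bound $W(\mathbf{u})\lesssim|\mathbf{u}+\mathbf{e}|^{q_-}$ near $-\mathbf{e}$, integrated over a window of length $c_1h^{\frac{p-q_-}{p}}$. The only difference is technical. The paper integrates $|S'|\lesssim S^{q_-/p}$, with $S=\sum_i(u^i+\Lambda_i)$, exactly through the auxiliary function $D=\frac{p}{p-q_-}S^{\frac{p-q_-}{p}}$ (or $\ln S$ when $q_-=p$), whose derivative is bounded by a constant, so it needs no bootstrap; you instead freeze the scale at $h$ and close with the maximal-interval continuity step.
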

\begin{proof}
 By translation invariance, we may assume that $x=0$. Define the auxiliary function
    \begin{equation*}
        D(y)=\left\{\begin{aligned}
            &\frac{p}{p-q_{-}}\Big(\sum_{i=1}^{m}(u^{i}(y)+\Lambda_{i})\Big)^{\frac{p-q_{-}}{p}},&\mbox{if }&q_{-}\neq p,\\
            &\ln{\Big(\sum_{i=1}^{m}(u^{i}(y)+\Lambda_{i})\Big)},&\mbox{if }&p=q_{-}.
        \end{aligned}\right.
    \end{equation*}
   According to the value of $q_{-}$, the function $D$ takes values in
    \begin{equation*}
        D(y)\in\left\{\begin{aligned}
            &(0,+\infty),&\mbox{if }&q_{-}<p,\\
            &[-\infty,+\infty),&\mbox{if }&q_{-}=p,\\
            &[-\infty,0),&\mbox{if }&q_{-}>p.
        \end{aligned}\right.
    \end{equation*}
   Differentiating $D$  and employing the identity\eqref{eq. speed identity}, we obtain
    \begin{equation*}
        |D'(y)|=\Big(\sum_{i=1}^{m}(u^{i}(y)+\Lambda_{i})\Big)^{\frac{-q_{-}}{p}}\cdot\Big|\sum_{i=1}^{m}u^{i}_{x}\Big|\lesssim W(\mathbf{u})^{-\frac{1}{p}}\cdot|\mathbf{u}_{x}|_{l_{p}}\sim1.
    \end{equation*}
    Since $|u(0)+\mathbf{e}|=h$, we have
    \begin{equation*}
        D(0)\sim\left\{\begin{aligned}
            &\frac{p}{p-q_{-}}(c\cdot h)^{\frac{p-q_{-}}{p}},&\mbox{if }&q_{-}\neq p,\\
            &\ln{(c\cdot h)},&\mbox{if }&p=q_{-}.
        \end{aligned}\right.
    \end{equation*}
    Then, if $-c_{1}h^{\frac{p-q_{-}}{p}}\leq y\leq c_{1}h^{\frac{p-q_{-}}{p}}$, we use the boundedness of $|D'(y)|$ and get that
    \begin{equation*}
        D(y)\sim\left\{\begin{aligned}
            &\frac{p}{p-q_{-}}(c\cdot h)^{\frac{p-q_{-}}{p}},&\mbox{if }&q_{-}\neq p,\\
            &\ln{(c\cdot h)},&\mbox{if }&p=q_{-}.
        \end{aligned}\right.
    \end{equation*}
    In other words, $|\mathbf{u}(y)+\mathbf{e}|\geq c_{1}h$.
\end{proof}

The following lemma provides an upper bound estimate for the decay of $\mathbf{u}$ as it approaches $\pm\mathbf{e}$ (up to a subsequence).

\begin{lemma}\label{lem. upper bound (subsequence)}
    Let $\mathbf{u}$ be the solution constructed in Lemma~\ref{lem. existence}. Assume that $|\mathbf{u}(x)+\mathbf{e}|=h$ for some sufficiently small $h>0$. Then there exists a sufficiently large constant $C_{2}=C_{2}(p,q_{-})$ such that
    \begin{equation*}
        |\mathbf{u}(y)+\mathbf{e}|\leq c_{1}h,\quad\mbox{for some }x-C_{2}h^{\frac{p-q_{-}}{p}}\leq y\leq x.
    \end{equation*}
    Here, the constant $c_{1}$ is the same as the one in Lemma~\ref{lem. upper bound (subsequence)}.
\end{lemma}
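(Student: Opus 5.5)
We argue by contradiction, using the energy-minimizing property of $\mathbf{u}$ from Lemma~\ref{lem. existence} together with a rescaling adapted to the homogeneity of $W_{-}$. By translation we take $x=0$, and we set $r=h^{\frac{p-q_{-}}{p}}$, the natural length scale at height $h$. Suppose the conclusion fails. Then
\begin{equation*}
|\mathbf{u}(y)+\mathbf{e}|>c_{1}h\quad\mbox{for all } -C_{2}r\leq y\leq0 .
\end{equation*}

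\emph{Step 1: upper bound on the interval.} We also need an upper bound for $|\mathbf{u}+\mathbf{e}|$ on $[-C_{2}r,0]$. For this we use the quantity $D(y)$ from the proof of Lemma~\ref{lem. lower bound of decay}, which satisfies $|D'|\lesssim 1$.
\begin{itemize}
\item If $q_{-}<p$, integrating $|D'|\lesssim1$ gives $|\mathbf{u}(y)+\mathbf{e}|\lesssim (h^{\frac{p-q_-}{p}}+|y|)^{\frac{p}{p-q_-}}\lesssim C_2^{\alpha_-}h$.
\item If $q_{-}\ge p$, integrating gives $|\mathbf{u}(y)+\mathbf{e}|\le h e^{C C_2}$ (for $q_-=p$), or an analogous power bound (for $q_->p$).
\end{itemize}
In every case we also restrict to $h$ small, so that $\mathbf{u}$ stays in the neighbourhood of $-\mathbf{e}$ where (A3) applies. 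Consequently $\mathbf{u}+\mathbf{e}\in[c_1h, K(C_2)h]$ in norm on the whole interval, and
\begin{equation*}
W(\mathbf{u})\geq c\,(c_1h)^{q_{-}}\quad\mbox{on }[-C_{2}r,0].
\end{equation*}

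\emph{Step 2: energy lower bound.} The speed identity \eqref{eq. speed identity} gives
\begin{equation*}
J(\mathbf{u},[-C_{2}r,0])\geq \int_{-C_2r}^0 W(\mathbf{u})\,dy \gtrsim C_{2}\, r\, h^{q_{-}} = C_{2}\,h^{\frac{p-q_-}{p}+q_-}.
\end{equation*}
Here the implicit constant involves $c_1^{q_-}$, which is already fixed by Lemma~\ref{lem. lower bound of decay}.

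\emph{Step 3: competitor with smaller energy.} We construct a competitor $\mathbf{v}$ on $[-C_2r,0]$ with $\mathbf{v}=\mathbf{u}$ at both endpoints. On a subinterval of length $\sim r$ at each end, $\mathbf{v}$ interpolates linearly between $\mathbf{u}(\pm\text{endpoint})$ and $-\mathbf{e}$. On the middle portion, $\mathbf{v}\equiv-\mathbf{e}$, where $W=0$. Each linear piece has slope $\lesssim K h/r$, so its energy is
\begin{equation*}
\lesssim r\Big[(Kh/r)^p + (Kh)^{q_-}\Big]\sim K^{\max(p,q_-)}\, h^{q_-}r ,
\end{equation*}
where we used $h^p/r^{p}=h^{q_-}$. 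Thus $J(\mathbf{v})\lesssim K(C_2)^{\max(p,q_-)}h^{q_-}r$. However, on the interval $\mathbf{v}$ touches $-\mathbf{e}$, which is allowed for a local competitor. Minimality of $\mathbf{u}$ (Step 5 of Lemma~\ref{lem. existence}) then forces
\begin{equation*}
C_{2}\lesssim K(C_{2})^{\max(p,q_-)} .
\end{equation*}

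\emph{Main obstacle.} The difficulty is that $K(C_2)$ grows with $C_2$ (exponentially when $q_-=p$), so the comparison in Step 3 is not immediately contradictory. I would fix this by working dyadically rather than on one long interval. Take the first subinterval $[-Mr,0]$ with $M$ a fixed constant. Inside it, $|\mathbf{u}+\mathbf{e}|$ stays comparable to $h$ (with constants independent of $C_2$), and the competitor argument gives a definite energy saving there unless $\mathbf{u}$ already dips below $c_1h$. The alternative route, if this localized saving is hard to extract, is to compare with the $\mathcal{F}$-geodesic distance instead: the Finsler length from $\mathbf{u}(0)$ to $-\mathbf{e}$ is $\lesssim h^{\frac{p+q_-(p-1)}{p}}$, which equals $h^{q_-}r$. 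Meanwhile, by Young's inequality, the length of the path on $[-C_2r,0]$ is at least $\int W^{(p-1)/p}|\mathbf{u}'|$. Combined with the speed identity this integral equals $\frac{p}{p-1}\int W\gtrsim C_2 h^{q_-}r$. Since the trajectory before $0$ must also come from $-\mathbf{e}$, minimality bounds it by the geodesic distance, so $C_2\lesssim 1$. Choosing $C_2$ large gives the contradiction, and this second version avoids $K(C_2)$ entirely; I would use it.
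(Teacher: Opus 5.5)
Your final Finsler-length route is essentially the paper's proof. The paper compares $J(\mathbf{u},\mathbb{R}_-)$ with the one-sided competitor that interpolates linearly from $\mathbf{u}(0)$ to $-\mathbf{e}$ over a length $r=h^{\frac{p-q_-}{p}}$ and then stays at $-\mathbf{e}$; this gives $J(\mathbf{u},\mathbb{R}_-)\lesssim h^{q_-}r$ against $\int_{-C_2r}^{0}W(\mathbf{u})\gtrsim C_2(c_1h)^{q_-}r$, which is your geodesic comparison rewritten via the speed identity, so the two-sided competitor of your Steps 1--3 and the $K(C_2)$ obstacle never arise.

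One caveat: the ``analogous power bound'' in your Step 1 is false for $q_->p$ once $C_2$ is large, because integrating $|D'|\lesssim1$ over a length $C_2r$ no longer keeps $D$ bounded away from $0$ and so gives no upper bound on $|\mathbf{u}+\mathbf{e}|$. You only need that bound to keep $\mathbf{u}$ near $-\mathbf{e}$ so that $W(\mathbf{u})\gtrsim(c_1h)^{q_-}$, and this follows directly from the smallness of the Finsler length of the arc $\mathbf{u}|_{(a,0]}$.
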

\begin{proof}
    For simplicity, we assume that $x=0$. We first derive an upper bound for the energy $J(\mathbf{u},\mathbb{R}_{-})$. To this end, we consider the following competitor in $\mathbb{R}_{-}$:
    \begin{equation*}
        \mathbf{v}(y)=\max\{1+h^{\frac{q_{-}-p}{p}}\cdot y,0\}\cdot \mathbf{u}(0).
    \end{equation*}
    It is easy to verify that $\mathbf{v}(0-)=\mathbf{u}(0-)$ and $\mathbf{v}(-\infty)=\mathbf{u}(-\infty)$. Therefore, using the energy minimizing property of $\mathbf{u}$ (see \textbf{Step 5} in the proof of Lemma~\ref{lem. existence}), we must have
    \begin{equation*}
        J(\mathbf{u},\mathbb{R}_{-})\leq J(\mathbf{v},\mathbb{R}_{-})\lesssim h^{\frac{p-q_{-}}{p}}\cdot h^{q_{-}}.
    \end{equation*}
    If the conclusion of Lemma~\ref{lem. upper bound (subsequence)} does not hold for some large $C_{2}$, then $W(\mathbf{u})\gtrsim(c_{1}h)^{q}$ for $-C_{2}h^{\frac{p-q_{-}}{q}}\leq y\leq0$. As a result, we have
    \begin{equation*}
        h^{\frac{p-q_{-}}{p}}\cdot h^{q_{-}}\gtrsim J(\mathbf{u},\mathbb{R}_{-})\geq\int_{-C_{2}h^{\frac{p-q_{-}}{q}}}^{0}W(\mathbf{u}) dx\gtrsim C_{2}h^{\frac{p-q_{-}}{p}}\cdot(c_{1}h)^{q_{-}}.
    \end{equation*}
    Then, we have reached a contradiction when $C_{2}$ is large.
\end{proof}

Summarizing Lemma~\ref{lem. lower bound of decay} and Lemma~\ref{lem. upper bound (subsequence)}, we have the following estimate.
\begin{lemma}\label{lem. (non-)existence of free boundary}
    Let $\mathbf{u}$ be the solution constructed in Lemma~\ref{lem. existence}. Assume that at $x\in\mathbb{R}$ $|\mathbf{u}(x)+\mathbf{e}|=h$ for some sufficiently small $h>0$. Then the behavior of $\mathbf{u}$ near $-\mathbf{e}$ is classified into the following three cases:
    \begin{itemize}
        \item[(1)] If $q_{-}<p$, there exists some $a\leq x$ with $x-a\sim h^{\frac{p-q_{-}}{p}}$. For all $y\in[a,x]$, we have $|\mathbf{u}(y)+\mathbf{e}|\sim(y-a)^{\frac{p}{p-q_{-}}}$.
        \item[(2)] If $q_{-}=p$, then for all $y\leq x$, $C^{-1}\cdot(\theta_{2})^{y-x}\cdot h\leq|\mathbf{u}(y)+\mathbf{e}|\leq C\cdot(\theta_{1})^{y-x}\cdot h$ for some $0<\theta_{1}\leq \theta_{2}<1$ depending on $W(\mathbf{u})$.
        \item[(3)] If $q_{-}>p$, then for all $y\leq x$, $|\mathbf{u}(y)+\mathbf{e}|\sim(x-y+1)^{\frac{p}{p-q_{-}}}\cdot h$.
    \end{itemize}
    A similar estimate holds when $|\mathbf{u}(x)-\mathbf{e}|=h$ for a sufficiently small $h>0$.
\end{lemma}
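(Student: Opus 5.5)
The proof iterates Lemma~\ref{lem. lower bound of decay} and Lemma~\ref{lem. upper bound (subsequence)} along a dyadic-type sequence of points moving to the left, starting from $x_{0}=x$ with $h_{0}=h$. Given $x_{k}$ with $|\mathbf{u}(x_{k})+\mathbf{e}|=h_{k}$, Lemma~\ref{lem. upper bound (subsequence)} produces a point $y\in[x_{k}-C_{2}h_{k}^{\frac{p-q_{-}}{p}},x_{k}]$ with $|\mathbf{u}(y)+\mathbf{e}|\leq c_{1}h_{k}$. By continuity there is a point $x_{k+1}\in[y,x_{k}]$ with $|\mathbf{u}(x_{k+1})+\mathbf{e}|=c_{1}h_{k}=:h_{k+1}$; take the largest such point. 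Lemma~\ref{lem. lower bound of decay} then forces $x_{k}-x_{k+1}\geq c_{1}h_{k}^{\frac{p-q_{-}}{p}}$, since inside that window the distance to $-\mathbf{e}$ stays $\geq c_{1}h_{k}$. Hence
\[
  x_{k}-x_{k+1}\sim h_{k}^{\frac{p-q_{-}}{p}},\qquad h_{k}=c_{1}^{k}h .
\]
Lemma~\ref{lem. lower bound of decay} also gives a comparability on $[x_{k+1},x_{k}]$: applied at any point $z$ there with $|\mathbf{u}(z)+\mathbf{e}|=\eta$, it shows the distance cannot drop below $c_{1}\eta$ within a window of size $c_{1}\eta^{\frac{p-q_{-}}{p}}$. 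Combining this with the bound on $|D'|$ from that proof, $|\mathbf{u}(z)+\mathbf{e}|\sim h_{k}$ throughout $[x_{k+1},x_{k}]$. This holds up to harmless constants, using that $D$ is Lipschitz and that $\sum_{i}(u^{i}+\Lambda_{i})\sim|\mathbf{u}+\mathbf{e}|$ since all components lie on the correct side.

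With this scale-by-scale picture, I sum the lengths $x_{k}-x_{k+1}\sim (c_{1}^{k}h)^{\frac{p-q_{-}}{p}}$. The three cases are as follows.

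\textbf{Case $q_{-}<p$.} The exponent is positive, so the series is geometric and convergent. The limit $a=\lim x_{k}$ is finite, $x-a\sim h^{\frac{p-q_{-}}{p}}$, and $\mathbf{u}(a)=-\mathbf{e}$. For $y\in[x_{k+1},x_{k}]$ the tail sum gives $y-a\sim h_{k}^{\frac{p-q_{-}}{p}}$, hence $|\mathbf{u}(y)+\mathbf{e}|\sim h_{k}\sim(y-a)^{\frac{p}{p-q_{-}}}$.

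\textbf{Case $q_{-}=p$.} Every step has length $\sim1$, with constants between $c_{1}$ and $C_{2}$. So $x-x_{k}\sim k$, and $h_{k}=c_{1}^{k}h$ yields exponential bounds with two possibly different bases $\theta_{1}\leq\theta_{2}$, coming from the upper and lower step lengths. Here $a=-\infty$.

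\textbf{Case $q_{-}>p$.} The exponent is negative, so the steps grow geometrically. This gives $x-x_{k}\sim h^{\frac{p-q_{-}}{p}}c_{1}^{k\frac{p-q_{-}}{p}}$, up to the additive constant from the first step. Inverting, $h_{k}\sim h\,(1+(x-x_{k})h^{\frac{q_{-}-p}{p}})^{\frac{p}{p-q_{-}}}$, which is the stated power-law decay after normalizing $h^{\frac{q_{-}-p}{p}}\sim1$ in the scale of the statement; in any case it is the form $(x-y+1)^{\frac{p}{p-q_{-}}}h$ up to constants depending on $h$ being small. Again $a=-\infty$.

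The main obstacle is the monotonicity-free bookkeeping: $\mathbf{u}$ is not yet known to be monotone, so the distance $|\mathbf{u}+\mathbf{e}|$ might oscillate inside $[x_{k+1},x_{k}]$. I would handle this with $D$ from Lemma~\ref{lem. lower bound of decay}, which is a Lipschitz function of $y$ ($|D'|\lesssim1$) and comparable to a power (or logarithm) of $|\mathbf{u}+\mathbf{e}|$. Lipschitz control of $D$ on an interval of length $\sim h_{k}^{\frac{p-q_{-}}{p}}$ limits upward excursions to a bounded factor of $h_{k}$. The downward direction is already controlled by Lemma~\ref{lem. lower bound of decay}. This yields the two-sided comparability $|\mathbf{u}(y)+\mathbf{e}|\sim h_{k}$ needed above. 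The statement near $+\mathbf{e}$ follows symmetrically, using $-\mathbf{u}(-x)$ and $W_{+}$.
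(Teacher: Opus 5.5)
Your skeleton is the paper's: iterate Lemma~\ref{lem. lower bound of decay} and Lemma~\ref{lem. upper bound (subsequence)} to get $x_k$ with $|\mathbf{u}(x_k)+\mathbf{e}|=c_1^kh$ and steps $x_k-x_{k+1}\sim h_k^{\frac{p-q_-}{p}}$, then sum. Taking $x_{k+1}$ to be the \emph{largest} crossing point is a good choice. It gives $|\mathbf{u}+\mathbf{e}|>c_1h_k$ on $(x_{k+1},x_k]$ for free, whereas the window of Lemma~\ref{lem. lower bound of decay} around $x_k$ has radius only $c_1h_k^{\frac{p-q_-}{p}}$ and need not cover a step of length up to $C_2h_k^{\frac{p-q_-}{p}}$.

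The gap is the upper bound (no upward excursions) in the degenerate case $q_->p$. There $D=-\frac{p}{q_--p}S^{-\frac{q_--p}{p}}$ with $S=\sum_i(u^i+\Lambda_i)$, so an upward excursion of $S$ means $D$ rising toward $0$. On an interval of length up to $C_2h_k^{\frac{p-q_-}{p}}$, the bound $|D'|\lesssim1$ only says that $D$ moves by $\lesssim C_2h_k^{\frac{p-q_-}{p}}$. This is of the same order as $|D(x_k)|\sim h_k^{\frac{p-q_-}{p}}$ and, $C_2$ being large, need not be smaller. So the Lipschitz bound is compatible with $D$ climbing almost to $0$, i.e.\ with $S$ becoming of order one.

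Your claim that Lipschitz control of $D$ limits upward excursions to a bounded factor of $h_k$ is therefore true for $q_-<p$, where $D>0$ is increasing in $S$, and for $q_-=p$, where $\ln S$ moves by $O(1)$. It is false for $q_->p$. Lemma~\ref{lem. lower bound of decay} is just a restatement of $|D'|\lesssim1$, so applying it at the excursion point does not help either. That is the device the paper uses in the case it writes out ($q_-<p$), and here it fails: around a point with $|\mathbf{u}+\mathbf{e}|=H\gg h_k$ its window has radius $c_1H^{\frac{p-q_-}{p}}\ll h_k^{\frac{p-q_-}{p}}$ and need not reach $x_k$.

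What is missing is an energy input. The first half of the proof of Lemma~\ref{lem. upper bound (subsequence)}, namely minimality against the linear competitor, gives $J(\mathbf{u},(-\infty,x_k])\lesssim h_k^{\beta}$ with $\beta=\frac{p-q_-}{p}+q_-=\frac{p+(p-1)q_-}{p}>0$. Suppose $|\mathbf{u}(z)+\mathbf{e}|=H$ for some $z\le x_k$ with $H$ small. Lemma~\ref{lem. lower bound of decay} gives $W(\mathbf{u})\gtrsim H^{q_-}$ on $[z-c_1H^{\frac{p-q_-}{p}},z]$, hence $J(\mathbf{u},(-\infty,x_k])\gtrsim H^{\beta}$, and so $H\lesssim h_k$. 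If $|\mathbf{u}+\mathbf{e}|$ exceeded a fixed small $\epsilon_0$ somewhere on $(-\infty,x_k]$, apply this at an intermediate point where it equals $\epsilon_0$; for small $h_k$ that is a contradiction.

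This yields $\sup_{z\le x_k}|\mathbf{u}(z)+\mathbf{e}|\lesssim|\mathbf{u}(x_k)+\mathbf{e}|$ in all three cases, and with it your summation goes through unchanged. Your remark that the case-$(3)$ formula holds only with $h$-dependent constants is right. What your computation actually gives is the scale-invariant form $|\mathbf{u}(y)+\mathbf{e}|\sim\big(h^{\frac{p-q_-}{p}}+x-y\big)^{\frac{p}{p-q_-}}$. Likewise, in case $(2)$ your step bounds give bases $1<\theta_1\le\theta_2$; the ``$<1$'' in the statement is a slip.
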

\begin{proof}
    We only consider the case $q_{-}<p$ (which is the most complicated situation), and the other two cases can be argued similarly. By Lemma~\ref{lem. lower bound of decay} and Lemma~\ref{lem. upper bound (subsequence)}, there exists a sequence $\{x_k\}_{k\geq0}$ with $x_0=x$ such that
    \begin{equation*}
        |\mathbf{u}(x_{k})+\mathbf{e}|=(c_{1})^{k}h,\quad\mbox{and }x_{k}-C_{2}\Big((c_{1})^{k}h\Big)^{\frac{p-q_{-}}{p}}\leq x_{k+1}\leq x_{k}-c_{1}\Big((c_{1})^{k}h\Big)^{\frac{p-q_{-}}{p}}.
    \end{equation*}
    Then, such a sequence must converge, and we let $\displaystyle a:=\lim_{k\to\infty}x_{k}$. Moreover,
    \begin{equation*}
        x_{k}-a\sim\Big((c_{1})^{k}h\Big)^{\frac{p-q_{-}}{p}},
    \end{equation*}
    and, in particular,
\[
x-a\sim h^{\frac{p-q_-}{p}} .
\]
    Let $y\in[a,x]$, then either $y=a$ or $y\in[x_{k+1},x_{k}]$ for some $k\geq0$. If $y=a$, then
    \begin{equation*}
        |\mathbf{u}(y)+\mathbf{e}|=|\mathbf{u}(a)+\mathbf{e}|=\lim_{k\to\infty}|\mathbf{u}(x_{k})+\mathbf{e}|=0.
    \end{equation*}
    If $y\in[x_{k+1},x_{k}]$ for some $k\geq0$, then 
    $$y-a\sim\Big((c_{1})^{k}h\Big)^{\frac{p-q_{-}}{p}}.$$ 
    Applying Lemma~\ref{lem. lower bound of decay} with the input being replaced by $(x_{k},(c_{1})^{k}h)$, we have $$|\mathbf{u}(y)+\mathbf{e}|\geq(c_{1})^{k+1}h. $$
    Besides, if $|u(y)+\mathbf{e}|=H\gg(c_{1})^{k}h$, we can also apply Lemma~\ref{lem. lower bound of decay} with the input being replaced by $(y,H)$. As a result, we have $|\mathbf{u}(x_{k})+\mathbf{e}|\gtrsim H\gg(c_{1})^{k}h$, which is a contradiction. Therefore, we concluede that $$|u(y)+\mathbf{e}|\sim(c_{1})^{k}h\sim(y-a)^{\frac{p}{p-q_{-}}}.$$
\end{proof}
\subsection{Blow-up or blow-down}\label{subsec. Blow-up or blow-down}
We further study the asymptotic behavior near the potential wells and improve the estimates obtained in Lemma~\ref{lem. (non-)existence of free boundary}. For simplicity, we only study the asymptotic behavior as $\mathbf{u}\to-\mathbf{e}$. Define
\begin{equation*}
    \mathbf{v}(x):=\mathbf{u}(x)+\mathbf{e}.
\end{equation*}
Since $\mathbf{u}(x)\in[-\Lambda_{1},\Lambda_{1}]\times[-\Lambda_{m},\Lambda_{m}]$, we have 
\[
v^{i}(x)\geq0,\qquad 1\leq i\leq m.
\]

From Lemma~\ref{lem. (non-)existence of free boundary}, we know that $a\in\mathbb{R}$ when $q_{-}<p$, while $a=-\infty$ when $q_{-}\geq p$. Without loss of generality, we assume that $a=0$ when $q_{-}<p$. 
We now consider the limit $T\to a+$, that is,
\[
T\to 0 +\quad\text{if }q_{-}<p,
\qquad\text{and}\qquad
T\to-\infty\quad\text{if }q_{-}\geq p.
\]

\begin{definition}\label{def. blow-up/down process, v_T construction}
Consider the following blow-up/down functions.
\begin{itemize}
    \item[(1)] When $q_{-}<p$ (and $a=0$), as $T\to0+$, we let
    \begin{equation*}
        \mathbf{v}_{T}(y):=T^{-\frac{p}{p-q_{-}}}\cdot\mathbf{v}(Ty),\quad\mbox{for }y\in[0,1].
    \end{equation*}
    \item[(2)] When $q_{-}=p$ (and $a=-\infty$), as $T\to-\infty$, we let
    \begin{equation*}
        \mathbf{v}_{T}(y):=|\mathbf{v}(T)|_{l_{p}}^{-1}\cdot\mathbf{v}(y+T),\quad\mbox{for }y\in(-\infty,0].
    \end{equation*}
    \item[(3)] When $q_{-}>p$ (and $a=-\infty$), as $T\to-\infty$, we let
    \begin{equation*}
        \mathbf{v}_{T}(y):=|T|^{-\frac{p}{p-q_{-}}}\cdot\mathbf{v}(|T|\cdot y),\quad\mbox{for }y\in(-\infty,-1].
    \end{equation*}
\end{itemize}
For simplicity, we denote
\[
I=[0,1],\quad I=(-\infty,0],\quad\text{and}\quad I=(-\infty,-1]
\]
in cases (1), (2), and (3), respectively.
\end{definition}

\begin{lemma}\label{lem. v_T(y) asymptotic behavior}
    We have the following asymptotic estimates (independent of $T$):
\begin{itemize}
    \item[(1)] When $q_{-}<p$, then
    \begin{equation*}
        |\mathbf{v}_{T}(y)|\sim y^{\frac{p}{p-q_{-}}}\mbox{ and }|\nabla_{y}\mathbf{v}_{T}(y)|\sim y^{\frac{q}{p-q_{-}}},\quad\mbox{for }y\in I.
    \end{equation*}
    \item[(2)] When $q_{-}=p$, then there exist two uniform constants $0<\theta_{1}\leq \theta_{2}$ such that
    \begin{equation*}
        (\theta_{2})^{y}\lesssim|\mathbf{v}_{T}(y)|\lesssim(\theta_{1})^{y}\mbox{ and }(\theta_{2})^{y}\lesssim|\nabla_{y}\mathbf{v}_{T}(y)|\lesssim(\theta_{1})^{y},\quad\mbox{for }y\in I.
    \end{equation*}
    \item[(3)] When $q_{-}>p$, then
    \begin{equation*}
        |\mathbf{v}_{T}(y)|\sim|y|^{\frac{p}{p-q_{-}}}\mbox{ and }|\nabla_{y}\mathbf{v}_{T}(y)|\sim|y|^{\frac{q}{p-q_{-}}},\quad\mbox{for }y\in I.
    \end{equation*}
\end{itemize}
\end{lemma}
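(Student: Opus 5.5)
The plan is to transfer the estimates of Lemma~\ref{lem. (non-)existence of free boundary} to the rescaled functions. The bounds on $|\mathbf{v}_T|$ come directly from that lemma. The bounds on $|\nabla_y\mathbf{v}_T|$ then follow from the speed identity \eqref{eq. speed identity}, combined with the homogeneity of $W_-$ in (A3)--(A4). Throughout, I read the exponent $q$ in the gradient bounds as $q_-$.

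\textbf{Step 1 (size of $\mathbf{v}$).} In case (1), with $a=0$, I apply Lemma~\ref{lem. (non-)existence of free boundary}(1) at a fixed point $x_0$ with $|\mathbf{v}(x_0)|=h_0$ small. This gives $|\mathbf{v}(x)|\sim x^{\frac{p}{p-q_-}}$ for $x\in(0,x_0]$. For $T\le x_0$ and $y\in[0,1]$ we then get
\[
|\mathbf{v}_T(y)|=T^{-\frac{p}{p-q_-}}|\mathbf{v}(Ty)|\sim y^{\frac{p}{p-q_-}}.
\]

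In case (3), I apply part (3) of the same lemma with base point $x=T$ and also at the far point. Since every $x\le x_0$ lies in the region near $-\mathbf{e}$, we obtain $|\mathbf{v}(x)|\sim |x|^{\frac{p}{p-q_-}}$ for $x\to-\infty$. This yields $|\mathbf{v}_T(y)|\sim|y|^{\frac{p}{p-q_-}}$ on $(-\infty,-1]$ once $|T|$ is large.

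In case (2), I apply part (2) with $x=T$ and $h=|\mathbf{v}(T)|$, which gives $C^{-1}\theta_2^{y}\le|\mathbf{v}_T(y)|\le C\theta_1^{y}$ for $y\le0$. A harmless point here is that the norms $|\cdot|$ and $|\cdot|_{l_p}$ are equivalent, so the choice of norm in the normalization does not matter.

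\textbf{Step 2 (gradients).} Near $-\mathbf{e}$, assumptions (A3)--(A5) give $W(\mathbf{u})=W_-(\mathbf{v})(1+o(1))$. Since $W_-$ is positive on the compact set $[0,\infty)^m\cap\partial B_1$ and homogeneous of degree $q_-$, we have $W(\mathbf{u})\sim|\mathbf{v}|^{q_-}$. The speed identity \eqref{eq. speed identity} then gives
\[
|\mathbf{v}'(x)|\sim|\mathbf{v}(x)|^{q_-/p}.
\]
Now I rescale.

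In case (1), $\nabla_y\mathbf{v}_T(y)=T^{1-\frac{p}{p-q_-}}\mathbf{v}'(Ty)$, so
\[
|\nabla_y\mathbf{v}_T|\sim T^{-\frac{q_-}{p-q_-}}(Ty)^{\frac{p}{p-q_-}\cdot\frac{q_-}{p}}=y^{\frac{q_-}{p-q_-}}.
\]
Case (3) is identical with $|T|$ in place of $T$.

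In case (2), $q_-=p$ gives $|\mathbf{v}'|\sim|\mathbf{v}|$. Hence $|\nabla_y\mathbf{v}_T(y)|\sim|\mathbf{v}_T(y)|$, and the exponential bounds from Step 1 carry over directly.

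\textbf{Main obstacle.} The only delicate point is uniformity. The implicit constants must be independent of $T$, and the estimates must hold on the whole of $I$, not just near one base point. In cases (2) and (3) this holds because Lemma~\ref{lem. (non-)existence of free boundary} is applied with base point $x=T$ itself and gives bounds for all $y\le x$ with universal constants. In case (1), the single application at $x_0$ already covers all of $(0,x_0]$. In every case one must also check that $|\mathbf{v}|$ is small enough that the expansion in (A3) applies; this holds for $T$ close to $a$.
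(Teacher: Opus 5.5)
Your proposal is correct and follows the paper's route. The paper only says these bounds are direct consequences of Lemma~\ref{lem. (non-)existence of free boundary} and the speed identity \eqref{eq. speed identity}; you supply the missing steps, namely $W(\mathbf{u})\sim|\mathbf{v}|^{q_-}$ from (A3)--(A4), hence $|\mathbf{v}'|\sim|\mathbf{v}|^{q_-/p}$, followed by the rescaling.

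One correction to your last paragraph. In case (3), uniformity in $T$ comes from your fixed-base-point application at $x_0$, exactly as in case (1). It does not come from applying Lemma~\ref{lem. (non-)existence of free boundary}(3) at $x=T$, because the form $(x-y+1)^{\frac{p}{p-q_-}}h$ stated there can only hold with constants depending on $h=|\mathbf{v}(T)|$.
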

\begin{proof}
These estimates are direct consequences of Lemma~\ref{lem. (non-)existence of free boundary}
and the speed identity \eqref{eq. speed identity}.
\end{proof}

Since $$W(\mathbf{u})=W_{-}(\mathbf{v})\cdot\Big(1+E_{-}(\mathbf{v})\Big)$$ and  $W_{-}(\mathbf{v})$ is homogeneous of degree $q_{-}$, the following rescaled equations are satisfied by $\mathbf{v}_{T}(y)$ in $I$. The proof is a direct application of the chain rule, and we omit the proof.
\begin{lemma}\label{lem. rescaled equation for v_T}
    Assume that (A1)-(A5) hold for $W(\cdot)$. Let $\mathbf{u}(x)$ be a heteroclinical solution of \eqref{eq. GP equation}, and define $\mathbf{v}$ and $\mathbf{v}_{T}$ as above.
Then the following rescaled equations hold.
\begin{itemize}
    \item[(1)] If $q_{-}<p$, then
    \begin{align*}
        \Delta_{p,y}v^{i}_{T}(y)=&\Big(1+E_{-}(T^{\frac{p}{p-q_{-}}}\cdot\mathbf{v}_{T})\Big)\cdot D_{i}W_{-}(\mathbf{v}_{T})\\
        &+T^{\frac{p}{p-q_{-}}}\cdot W_{-}(\mathbf{v}_{T})\cdot D_{i}E_{-}(T^{\frac{p}{p-q_{-}}}\cdot\mathbf{v}_{T}).
    \end{align*}
    \item[(2)] If $q_{-}=p$, then
    \begin{align*}
        \Delta_{p,y}v^{i}_{T}(y)=&\Big(1+E_{-}(|\mathbf{v}(T)|_{l_{p}}\cdot\mathbf{v}_{T})\Big)\cdot D_{i}W_{-}(\mathbf{v}_{T})\\
        &+|\mathbf{v}(T)|_{l_{p}}\cdot W_{-}(\mathbf{v}_{T})\cdot D_{i}E_{-}(|\mathbf{v}(T)|_{l_{p}}\cdot\mathbf{v}_{T}).
    \end{align*}
    \item[(3)] If $q_{-}>p$, then
    \begin{align*}
        \Delta_{p,y}v^{i}_{T}(y)=&\Big(1+E_{-}(|T|^{\frac{p}{p-q_{-}}}\cdot\mathbf{v}_{T})\Big)\cdot D_{i}W_{-}(\mathbf{v}_{T})\\
        &+|T|^{\frac{p}{p-q_{-}}}\cdot W_{-}(\mathbf{v}_{T})\cdot D_{i}E_{-}(|T|^{\frac{p}{p-q_{-}}}\cdot\mathbf{v}_{T}).
    \end{align*}
\end{itemize}
Here, $D_{i}W_{-}(\cdot)$ and $D_{i}E_{-}(\cdot)$ are just two functions of the input (i.e., no chain rule is included).
\end{lemma}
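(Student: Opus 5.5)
The rescaled equations follow from the chain rule applied to $\mathbf{v}_{T}(y)=\lambda^{-1}\mathbf{v}(\sigma y+\tau)$. I treat the three cases at once through a scaling factor $\lambda>0$, a dilation $\sigma>0$ and a shift $\tau$:
\begin{itemize}
\item[(1)] $\lambda=T^{\frac{p}{p-q_{-}}}$, $\sigma=T$, $\tau=0$;
\item[(2)] $\lambda=|\mathbf{v}(T)|_{l_{p}}$, $\sigma=1$, $\tau=T$;
\item[(3)] $\lambda=|T|^{\frac{p}{p-q_{-}}}$, $\sigma=|T|$, $\tau=0$.
\end{itemize}
Since $\mathbf{v}=\mathbf{u}+\mathbf{e}$ differs from $\mathbf{u}$ by a constant, \eqref{eq. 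GP equation} reads $\Delta_{p,x}v^{i}=W_{i}(\mathbf{u})$, with $x=\sigma y+\tau$. The first step is to compute the left-hand side. From $\frac{d}{dy}v^{i}_{T}(y)=\lambda^{-1}\sigma\,v^{i}_{x}(\sigma y+\tau)$ we get
\begin{equation*}
\Delta_{p,y}v^{i}_{T}(y)=\lambda^{-(p-1)}\sigma^{p}\,\Delta_{p,x}v^{i}(\sigma y+\tau)=\lambda^{-(p-1)}\sigma^{p}\,W_{i}(\mathbf{u}(\sigma y+\tau)).
\end{equation*}
In cases (1) and (3) we have $\lambda^{-(p-1)}\sigma^{p}=\lambda^{1-q_{-}}\cdot\lambda^{-(p-1)+\frac{p-q_{-}}{p}\cdot\frac{p}{1}\cdot\frac{1}{1}\cdot 0}$; more simply, $\sigma^{p}=\lambda^{p-q_{-}}$, so the prefactor equals $\lambda^{1-q_{-}}$. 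In case (2), $\sigma=1$ and $q_{-}=p$, so the prefactor $\lambda^{1-p}$ is again $\lambda^{1-q_{-}}$. Hence in all three cases
\begin{equation*}
\Delta_{p,y}v^{i}_{T}=\lambda^{1-q_{-}}W_{i}(\mathbf{u}).
\end{equation*}

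The second step is to compute $W_{i}(\mathbf{u})$ near $-\mathbf{e}$ using (A3). There $W(\mathbf{u})=W_{-}(\mathbf{v})(1+E_{-}(\mathbf{v}))$, and since $\partial\mathbf{v}/\partial\mathbf{u}$ is the identity, the product rule gives
\begin{equation*}
W_{i}(\mathbf{u})=D_{i}W_{-}(\mathbf{v})\,(1+E_{-}(\mathbf{v}))+W_{-}(\mathbf{v})\,D_{i}E_{-}(\mathbf{v}).
\end{equation*}
Now substitute $\mathbf{v}=\lambda\mathbf{v}_{T}$ and use homogeneity. Because $W_{-}$ is homogeneous of degree $q_{-}$, we have $W_{-}(\lambda\mathbf{v}_{T})=\lambda^{q_{-}}W_{-}(\mathbf{v}_{T})$, and $D_{i}W_{-}$ is homogeneous of degree $q_{-}-1$, so $D_{i}W_{-}(\lambda\mathbf{v}_{T})=\lambda^{q_{-}-1}D_{i}W_{-}(\mathbf{v}_{T})$. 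Multiplying by $\lambda^{1-q_{-}}$ then yields
\begin{equation*}
\Delta_{p,y}v^{i}_{T}=(1+E_{-}(\lambda\mathbf{v}_{T}))\,D_{i}W_{-}(\mathbf{v}_{T})+\lambda\,W_{-}(\mathbf{v}_{T})\,D_{i}E_{-}(\lambda\mathbf{v}_{T}).
\end{equation*}
With each case's value of $\lambda$ inserted, this is exactly the claimed formula.

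The only step needing real care is justifying these pointwise computations. Two things must be checked. First, $\mathbf{v}(\sigma y+\tau)$ has to lie in the neighbourhood where (A3) applies. This holds for $T$ near $a$, by Lemma~\ref{lem. v_T(y) asymptotic behavior} together with the monotone decay toward $-\mathbf{e}$ established in Lemma~\ref{lem. (non-)existence of free boundary}. Second, at points where $v^{i}_{x}=0$ or $\mathbf{v}=0$, the identity should be read in the weak or classical sense inherited from \eqref{eq. GP equation}. There $D_{i}W_{-}$ is understood as extended by homogeneity away from the origin; at $y=0$ in case (1) it is interpreted as a limit. Since the scaling relations are linear in the flux $|v_{x}|^{p-2}v_{x}$, they pass to the weak formulation by testing against rescaled test functions.
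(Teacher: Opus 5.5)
Your proof is correct and is exactly the chain-rule-plus-homogeneity computation the paper has in mind (the paper omits the details): the flux rescaling gives the prefactor $\lambda^{-(p-1)}\sigma^{p}=\lambda^{1-q_{-}}$, which is absorbed by the homogeneity degrees $q_{-}-1$ of $D_{i}W_{-}$ and $q_{-}$ of $W_{-}$. Delete the garbled first expression for $\lambda^{-(p-1)}\sigma^{p}$ (as written it evaluates to $\lambda^{2-p-q_{-}}$), and note that the smallness of $\mathbf{v}(\sigma y+\tau)$ needed to invoke (A3) comes from the upper bounds in Lemma~\ref{lem. (non-)existence of free boundary}, not from ``monotone decay'': that lemma does not give monotonicity, and monotonicity is only proved later using these very blow-ups.
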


For the sequence $\mathbf{v}_{T}(y)$ where $y\in I$, we next establish the following compactness result.
\begin{lemma}[Compactness]\label{Compactness}
For any sequence $T_{k}\to a+$, there exist a subsequence (still denoted by $T_{k}$) and a limiting function $\mathbf{v}_{a+}$ such that
\begin{equation}\label{eq. limiting equation for the blow-up/down limit}
    \Delta_{p,y}\mathbf{v}_{a+}(y)=D_{i}W_{-}(\mathbf{v}_{a+}),\quad\mbox{for all }y\in I,
\end{equation}
and
\[
\mathbf{v}_{T_k}\to\mathbf{v}_{a+}
\quad\text{in }C^{1,\epsilon}(I).
\]
Moreover,
\begin{equation*}
    \lim_{k\to\infty}\int_{y\in I}\Big\{\sum_{i=1}^{m}\frac{|\nabla_{y}v^{i}_{T_{k}}|^{p}}{p}+W_{-}(\mathbf{v}_{T_{k}})\Big\}dy=\int_{y\in I}\Big\{\sum_{i=1}^{m}\frac{|\nabla_{y}v^{i}_{a+}|^{p}}{p}+W_{-}(\mathbf{v}_{a+})\Big\}dy.
\end{equation*}
\end{lemma}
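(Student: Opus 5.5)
The plan is to combine uniform estimates from Lemma~\ref{lem. v_T(y) asymptotic behavior} with the rescaled equations in Lemma~\ref{lem. rescaled equation for v_T}, extract a limit by Arzel\`a--Ascoli, and then upgrade to convergence of the energy using the integrable tails provided by the decay estimates.

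\textbf{Uniform bounds.} By Lemma~\ref{lem. v_T(y) asymptotic behavior}, on every compact subset $K\subset I$ both $|\mathbf{v}_{T}|$ and $|\nabla_y\mathbf{v}_T|$ are bounded above, uniformly in $T$. In case (1), $K$ may include the endpoint $y=1$ and even $y=0$, since both quantities vanish there. Hence the right-hand side of the rescaled equation in Lemma~\ref{lem. rescaled equation for v_T} is bounded on $K$, uniformly in $T$. This uses that $D_iW_-$ is homogeneous of degree $q_--1$ and that, by the two-sided bound $|\mathbf{v}_T|\sim$ (positive function) on $K$, we stay away from the origin. In case (1) near $y=0$ one only needs the upper bound $|D W_-(\mathbf{v}_T)|\lesssim |\mathbf{v}_T|^{q_--1}$, which is integrable because $(q_--1)p/(p-q_-)>-1$. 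The factors $T^{p/(p-q_-)}$ and $|\mathbf{v}(T)|_{l_p}$ tend to $0$, so the perturbation terms involving $E_-$ and $DE_-$ are $o(1)$ uniformly; this uses (A5).

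\textbf{Passage to the limit.} In one dimension, the bounded right-hand side gives a uniform Lipschitz bound for $|v^i_{T,y}|^{p-2}v^i_{T,y}$. Inverting the map $t\mapsto|t|^{p-2}t$, which is H\"older continuous, yields uniform $C^{1,\epsilon}$ bounds for $\mathbf{v}_T$ on $K$. Arzel\`a--Ascoli together with a diagonal argument over an exhaustion of $I$ then gives a subsequence with $\mathbf{v}_{T_k}\to\mathbf{v}_{a+}$ in $C^{1,\epsilon}_{loc}$. Passing to the limit in the integrated form of the equation, $|v^i_y|^{p-2}v^i_y(y_2)-|v^i_y|^{p-2}v^i_y(y_1)=\int_{y_1}^{y_2}(\cdots)$, uses dominated convergence together with the continuity of $DW_-$ away from $0$. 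This produces \eqref{eq. limiting equation for the blow-up/down limit}.

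\textbf{Energy convergence (the main obstacle).} In cases (2) and (3) $I$ is unbounded, so local convergence alone does not suffice and the tails must be controlled. Lemma~\ref{lem. v_T(y) asymptotic behavior} supplies $T$-independent majorants for the integrand. In case (2) the majorant is $\lesssim\theta_1^{py}$, and integrability as $y\to-\infty$ requires $\theta_1>1$. In the paper's normalization that means rewriting Lemma~\ref{lem. (non-)existence of free boundary}(2) as decay of $\mathbf{v}$ toward $-\infty$; I would re-check this convention. In case (3) the majorant is $\lesssim|y|^{pq_-/(p-q_-)}$, with exponent $pq_-/(p-q_-)<-p<-1$, so it is integrable on $(-\infty,-1]$. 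In case (1), $I=[0,1]$ is bounded and the integrand is uniformly bounded. In all three cases, uniform convergence on compact sets combined with dominated convergence gives convergence of $\int_I\{\sum|\nabla v^i|^p/p+W_-(\mathbf{v})\}$. Uniform $C^{1,\epsilon}(I)$ convergence, rather than merely local convergence, likewise follows from the local convergence together with the uniform smallness of $\mathbf{v}_T$ and $\nabla\mathbf{v}_T$ in the tails (or near $y=0$). The only delicate points are verifying integrability of the tail majorant in case (2) and the near-origin behavior of $DW_-$ in case (1).
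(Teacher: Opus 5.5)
Your proposal is correct and follows the same route as the paper, which simply cites Lemma~\ref{lem. v_T(y) asymptotic behavior}, Lemma~\ref{lem. rescaled equation for v_T} and the $C^{1,\epsilon}$ estimate for the $p$-Laplacian; you carry out that estimate by hand in one dimension and spell out details the paper leaves implicit, namely the integrable singularity of $DW_-$ at $y=0$ when $q_-<1$ and the $T$-independent tail majorants needed for energy convergence in cases (2)--(3). Your suspicion about Lemma~\ref{lem. (non-)existence of free boundary}(2) is justified: decay toward $-\infty$ requires $1<\theta_1\le\theta_2$, which is consistent with $\theta_->1$ in Theorem~\ref{thm. basic}.
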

\begin{proof}
    This follows from Lemma~\ref{lem. v_T(y) asymptotic behavior}, Lemma~\ref{lem. rescaled equation for v_T}, and from the interior $C^{1,\epsilon}$ estimate of the $p$-Laplacian equation.
\end{proof}

The following key lemma characterizes the homogeneity of the limiting function
$\mathbf{v}_{a+}$. Its proof is quite involved as it requires a Weiss typed monotonicity formula with an additional error term. We will postpone its proof to the next section, and temporarily take it for granted in this section.
\begin{lemma}[Homogeneity of the limit]\label{lem. homogeneity of the limit}
    The limiting function $\mathbf{v}_{a+}$ takes the form
    \begin{equation*}
        \mathbf{v}_{a+}(y)=\mu(y)\cdot\mathbf{d},
    \end{equation*}
    where $\mu(y)$ is a scalar function and $\mathbf{d}\in\mathbb{R}^{m}$ is a constant vector. Moreover, all components of $\mathbf{d}$ are positive and uniformly comparable, meaning that there exists a constant $C>0$ independent of the choice of the sequence $T_{k}\to a+$, such that
    \begin{equation}\label{eq. d's components are uniformly comparable}
        C^{-1} d^{j}\leq d^{i}\leq C d^{j},\quad\mbox{for all }1\leq i,j\leq m.
    \end{equation}
    The scalar function $\mu(y)$ takes the following form:
    \begin{equation}\label{formmu}
        \mu(y)=\left\{\begin{aligned}
            &y^{\frac{p}{p-q_{-}}}&\mbox{for }&y\in[0,1],&\mbox{if }&q_{-}<p,\\
            &\theta^{y}&\mbox{for }&y\in(-\infty,0],&\mbox{if }&q_{-}=p,\\
            &|y|^{\frac{p}{p-q_{-}}}&\mbox{for }&y\in(-\infty,-1],&\mbox{if }&q_{-}>p.\\
        \end{aligned}\right.
    \end{equation}
    Here, $\theta\in[\theta_{1},\theta_{2}]$ where $\theta_{1}$ and $\theta_{2}$ are constants mentioned in Lemma~\ref{lem. (non-)existence of free boundary}. The constant exponent $b$ and the constant vector $\mathbf{d}$ might depend on the choice of $T_{k}\to a+$.
\end{lemma}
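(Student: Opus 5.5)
The plan is to prove homogeneity through a Weiss-type monotonicity formula adapted to each of the three regimes, and then to read off the form $\mu(y)\mathbf{d}$ from the vanishing of its derivative along the blow-up limit. First, fix the scaling exponent: $\alpha_-=\frac{p}{p-q_-}$ when $q_-\neq p$, and the exponential case when $q_-=p$. For $q_-\neq p$ I will define, for $r\in(0,x_0)$ in case (1) (resp. $r$ large with the interval $(-\infty,-r]$ in case (3)),
\begin{equation*}
\mathcal{W}(r)=r^{-\beta}\int_{0}^{r}\Big\{\sum_{i}\frac{|v^{i}_{x}|^{p}}{p}+W_{-}(\mathbf{v})\Big\}dx-\frac{\alpha_-}{p}\,r^{-\beta-1}\sum_{i}|v^{i}(r)|^{p}\cdot(\text{sign}),
\end{equation*}
with $\beta$ chosen so that $\mathcal{W}$ is scale invariant under $\mathbf{v}\mapsto\mathbf{v}_T$, namely $\beta=p(\alpha_--1)+1=q_-\alpha_-+1$. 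Differentiating, and using the equation of Lemma~\ref{lem. rescaled equation for v_T} together with the Euler identity $\mathbf{v}\cdot DW_-(\mathbf{v})=q_-W_-(\mathbf{v})$, I expect
\begin{equation*}
\mathcal{W}'(r)\;\geq\;c\,r^{-\beta}\sum_i\Big|v^{i}_{x}(r)-\frac{\alpha_- v^{i}(r)}{r}\Big|^{2}\cdot(\text{weight})\;-\;\text{Err}(r).
\end{equation*}
Here the weight comes from the convexity inequality $|a|^p-|b|^p-p|b|^{p-2}b(a-b)\gtrsim |a-b|^2(|a|+|b|)^{p-2}$, and $\text{Err}(r)$ collects the contributions of $E_-$. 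Since $E_-\in C^2$ and vanishes at $0$, $|E_-(\mathbf{v})|+|\mathbf{v}||DE_-(\mathbf{v})|\lesssim |\mathbf{v}|$. Combined with the decay from Lemma~\ref{lem. (non-)existence of free boundary}, this gives $\text{Err}(r)\lesssim r^{-1}|\mathbf{v}(r)|$, which is integrable near $a$: for $q_-<p$ it is $\sim r^{\alpha_--1}$, and for $q_->p$ it is $|r|^{\alpha_--1}$ with $\alpha_-<0$, integrable at $-\infty$. Hence $\mathcal{W}(r)+\int\text{Err}$ is monotone, and by Lemma~\ref{lem. v_T(y) asymptotic behavior} it is bounded. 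So $\mathcal{W}(a+)$ exists.

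In case (2) the base $\theta$ of the exponential is unknown, so there is no natural power weight. Instead I will use a quotient-type monotone quantity. The natural choice is the frequency-like ratio
\begin{equation*}
N(T)=\frac{\sum_i |v^i_x(T)|^{p}}{\sum_i|v^i(T)|^{p}},
\end{equation*}
or equivalently a Weiss functional with the unknown rate $\log\theta$ replaced by $N$ itself. Using the equation and the speed identity \eqref{eq. speed identity}, I plan to show that $N$ is monotone up to an integrable error, with $\text{Err}\lesssim|\mathbf{v}(T)|\lesssim\theta_1^{T}$, so $N(-\infty)$ exists. It is bounded between constants by Lemma~\ref{lem. v_T(y) asymptotic behavior}.

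Next, apply the formula to the rescalings. By scale invariance, $\mathcal{W}(\mathbf{v}_{T_k},\rho)=\mathcal{W}(\mathbf{v},T_k\rho)\to\mathcal{W}(a+)$ for every $\rho$. The energy convergence in Lemma~\ref{Compactness} then gives that $\mathcal{W}(\mathbf{v}_{a+},\rho)$ is constant in $\rho$. The monotonicity identity applied to $\mathbf{v}_{a+}$, where the error term is now zero, forces $v^{i}_{a+,y}=\alpha_- v^i_{a+}/y$ wherever the weight is positive. The weight is positive on all of $I$ because $|\nabla\mathbf{v}_{a+}|\sim|y|^{\cdots}>0$ there. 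Integrating gives $v^{i}_{a+}=d^{i}|y|^{\alpha_-}$ in cases (1) and (3). In case (2), constancy of $N$ along the limit, combined with the equality case, gives $v^i_{a+,y}=\log\theta\cdot v^i_{a+}$, hence $v^{i}_{a+}=d^i\theta^{y}$. The bounds $\theta\in[\theta_1,\theta_2]$ follow from Lemma~\ref{lem. v_T(y) asymptotic behavior}.

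It remains to show that all $d^i$ are positive and comparable. Substituting $\mu\mathbf{d}$ into \eqref{eq. limiting equation for the blow-up/down limit} gives the algebraic system $c_{p,q_-}|d^i|^{p-2}d^i=D_iW_-(\mathbf{d})$, with $|\mathbf{d}|$ fixed by normalization ($|\mathbf{d}|_{l_p}=1$ in case (2), and bounded above and below in general by Lemma~\ref{lem. v_T(y) asymptotic behavior}). Since $d^i\ge0$ and $\mathbf{d}\neq0$, I will argue by contradiction and suppose $d^i=0$ for some $i$. Then the left side of the $i$-th equation vanishes. On the right, $D_iW_-$ at a point with $d^i=0$ should be strictly negative: integrate $D_{ij}W_-<0$ from (A4) along a path from the coordinate face, and use homogeneity together with $W_->0$. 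This contradicts the equation, so $d^i>0$. Uniform comparability follows from compactness. The set of admissible $\mathbf{d}$ solving the algebraic system with $|\mathbf{d}|$ in a fixed compact range is compact and contained in the open positive cone, so the ratios $d^i/d^j$ are bounded uniformly, independent of the sequence $T_k$.

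The main obstacle I expect is deriving the monotonicity formula with a signed, coercive remainder for $p\neq2$, and especially for $m\ge2$, where cross terms $\sum_i$ must be organized into a sum of squares using only Euler's identity. Case (2) is harder still, because the rate is not known a priori. There I would first try the ratio $N(T)$ and check its derivative directly.
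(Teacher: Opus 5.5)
\textbf{Cases $q_-<p$ and $q_->p$.} Here your argument is essentially the paper's. You use a power-weighted Weiss energy with a boundary correction, and coercivity comes from the monotonicity of $s\mapsto|s|^{p-2}s$. There are no cross terms, because $\dot{\mathbf v}_T=\frac1T\big(y\nabla_y\mathbf v_T-\frac{p}{p-q_-}\mathbf v_T\big)$ acts component by component. You conclude via constancy of the Weiss energy along the blow-up, while the paper integrates the remainder and rescales; these are equivalent variants.

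\textbf{The gap: the case $q_-=p$.} Your candidate $N(T)=|\mathbf v_x(T)|_{l_p}^p/|\mathbf v(T)|_{l_p}^p$ is not a usable monotone quantity.
\begin{itemize}
\item By the speed identity \eqref{eq. speed identity} and the $p$-homogeneity of $W_-$, $N(T)=\frac{p}{p-1}W_-(\omega)\big(1+E_-(\mathbf v)\big)$ with $\omega=\mathbf v/|\mathbf v|_{l_p}$. So $N$ depends only on the direction.
\item Its derivative is, up to error, $\frac{p}{p-1}DW_-(\omega)\cdot\omega'$. Nothing in the equation, via integration by parts or a pointwise inequality, gives this a sign. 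Monotonicity of $N$ would mean $W_-$ increases along the angular path $\omega(T)$, which is a dynamical statement you have no tool for.
\item Even if $N$ were constant along $\mathbf v_{a+}$, this would only confine $\omega$ to a level set of $W_-$ on the unit sphere. It would not force $\omega'=0$, so the ``equality case'' you invoke does not exist for $N$.
\end{itemize}

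\textbf{The missing idea.} The paper uses the normalized energy
\[
\mathcal W(T)=|\mathbf v(T)|_{l_p}^{-p}\int_{-\infty}^{T}\Big(\sum_{i=1}^m\frac{|v^i_x|^p}{p}+W_-(\mathbf v)\Big)dx .
\]
For the exact homogeneous equation this equals $\frac1p\sum_i|v^i_x|^{p-2}v^i_xv^i/|\mathbf v|^p_{l_p}$. It is an Almgren-type frequency whose numerator is $\sum_i|v^i_x|^{p-2}v^i_xv^i$, not $|\mathbf v_x|^p_{l_p}$.
\begin{itemize}
\item Write $\mathbf v_T(y)=\mathbf v(y+T)/|\mathbf v(T)|_{l_p}$. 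Then $\dot{\mathbf v}_T=\nabla_y\mathbf v_T-f'(T)\mathbf v_T$ with $f'(T)=\sum_j|v^j_T(0)|^{p-2}v^j_T(0)\nabla_yv^j_T(0)$.
\item After integrating by parts, $\mathcal W'(T)$ is an exponentially small error plus $\mathcal P(\mathbf a,\mathbf b)=|\mathbf a|^p_{l_p}-\sum_i|a^i|^{p-2}a^ib^i\sum_j|b^j|^{p-2}b^ja^j$, evaluated at $(\mathbf a,\mathbf b)=(\nabla_y\mathbf v_T(0),\mathbf v_T(0))$ with $|\mathbf b|_{l_p}=1$.
\item By H\"older, $\mathcal P\ge0$, with equality iff $\mathbf a\parallel\mathbf b$. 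This gives $\mathbf v_{a+}=\varphi(y)\mathbf d$.
\item You still need the scalar ODE step to get $\varphi=\theta^y$: $\Delta_p\varphi=c\varphi^{p-1}$ has first integral $(\varphi')^p-c_1\varphi^p=c_2$, and the decay at $-\infty$ forces $c_2=0$.
\end{itemize}

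\textbf{Two smaller corrections.}
\begin{itemize}
\item For $p\neq2$ your boundary term is not scale invariant. In case $q_-<p$ it should be $\frac1p\big(\frac{p}{p-q_-}\big)^{p-1}r^{-\frac{p^2}{p-q_-}}\sum_i|v^i(r)|^p$, not $\frac{\alpha_-}{p}r^{-\beta-1}\sum_i|v^i(r)|^p$. The case $q_->p$ is analogous.
\item On the face $\{d^i=0\}$, $D_iW_-(\mathbf d)$ is negative only if $q_->1$; it is positive if $q_-<1$. All you need is $D_iW_-(\mathbf d)\neq0$. This follows, as in the paper, from Euler's identity for the $(q_--1)$-homogeneous function $D_iW_-$: $(q_--1)D_iW_-(\mathbf d)=\sum_{j\neq i}D_{ij}W_-(\mathbf d)d^j<0$ by (A4).
\end{itemize}
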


Assuming that Lemma~\ref{lem. homogeneity of the limit} is correct, we have the following corollary.
\begin{corollary}[Almost homogeneity]\label{cor3.1}
    For all $1\leq i,j\leq m$, it holds that
    \begin{equation*}
        \lim_{x\to a+}\Big(\frac{|u_{x}^{i}|^{p-2}u_{x}^{i}}{D_{i}W(\mathbf{u})}:\frac{|u_{x}^{j}|^{p-2}u_{x}^{j}}{D_{j}W(\mathbf{u})}\Big)=1,\quad\mbox{and }\limsup_{x\to a+}\frac{u^{i}_{x}}{u^{j}_{x}}<+\infty.
    \end{equation*}
    and 
     \begin{equation*}
        \lim_{x\to b-}\Big(\frac{|u_{x}^{i}|^{p-2}u_{x}^{i}}{D_{i}W(\mathbf{u})}:\frac{|u_{x}^{j}|^{p-2}u_{x}^{j}}{D_{j}W(\mathbf{u})}\Big)=1,\quad\mbox{and }\limsup_{x\to b-}\frac{u^{i}_{x}}{u^{j}_{x}}<+\infty.
    \end{equation*}
\end{corollary}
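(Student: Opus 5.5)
The plan is to argue by contradiction via blow-up sequences and to use Lemma~\ref{lem. homogeneity of the limit} for the limit profiles. I treat only $x\to a+$; the case $x\to b-$ is symmetric, with $\mathbf{v}=\mathbf{e}-\mathbf{u}$ and $W_{+}$.

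Suppose the first limit fails. Then there is a sequence $x_k\to a+$ along which the ratio
\[
\frac{|u_{x}^{i}|^{p-2}u_{x}^{i}}{D_{i}W(\mathbf{u})}\cdot\frac{D_{j}W(\mathbf{u})}{|u_{x}^{j}|^{p-2}u_{x}^{j}}
\]
stays a fixed distance from $1$, or the ratio $u^i_x/u^j_x$ tends to $+\infty$. I choose $T_k$ so that $x_k$ is a fixed interior point of the rescaled interval:
\begin{itemize}
\item when $q_-<p$, take $T_k=2x_k$, so $y=1/2$;
\item when $q_-=p$, take $T_k=x_k+1$, so $y=-1$;
\item when $q_->p$, take $T_k=-|x_k|/2$, so $y=-2$.
\end{itemize}

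Both ratios are invariant under the rescaling of Definition~\ref{def. blow-up/down process, v_T construction}. For $u^i_x/u^j_x$ this is immediate. For the first ratio I use Lemma~\ref{lem. rescaled equation for v_T}: the numerator $|\nabla_y v^i_T|^{p-2}\nabla_y v^i_T$ and the denominator $D_iW$ pick up the same power of the scaling factor. This is exactly the homogeneity balance that makes the rescaled equation scale-free. The error terms from $E_-$ tend to zero because $E_-$ is $C^2$ and vanishes at the origin.

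By Lemma~\ref{Compactness}, a subsequence satisfies $\mathbf{v}_{T_k}\to\mathbf{v}_{a+}$ in $C^{1,\epsilon}$ near the fixed point $y_0$. Also $D_iW(\mathbf{u}(x_k))$, after rescaling, converges to $D_iW_-(\mathbf{v}_{a+}(y_0))$. Lemma~\ref{lem. homogeneity of the limit} gives $\mathbf{v}_{a+}=\mu(y)\mathbf{d}$, with the $d^i$ positive and uniformly comparable. Hence $\nabla_y v^i_{a+}(y_0)=\mu'(y_0)d^i$, and $\mu'(y_0)\neq0$ for each explicit form of $\mu$ in \eqref{formmu}.

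The second claim then follows: $u^i_x/u^j_x\to d^i/d^j\le C$, with $C$ independent of the subsequence by \eqref{eq. d's components are uniformly comparable}. This contradicts blow-up of the ratio.

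For the first claim, the limit equation \eqref{eq. limiting equation for the blow-up/down limit} reads
\[
(p-1)|\mu'|^{p-2}\mu''\,|d^i|^{p-2}d^i=D_iW_-(\mu\mathbf{d})=\mu^{q_--1}D_iW_-(\mathbf{d}).
\]
So $D_iW_-(\mathbf{d})=\lambda\,(d^i)^{p-1}$ with $\lambda$ independent of $i$, and in particular $D_iW_-(\mathbf{d})>0$. The ratio at $y_0$ is
\[
\frac{(\mu')^{p-1}(d^i)^{p-1}}{\mu^{q_--1}D_iW_-(\mathbf{d})}=\frac{(\mu')^{p-1}}{\lambda\,\mu^{q_--1}},
\]
which does not depend on $i$. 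Therefore the ratio $i:j$ equals $1$ in the limit, contradicting the choice of $x_k$.

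The main obstacle is checking that the denominators stay nondegenerate along the sequence, so that the ratios converge rather than merely being bounded. This needs $D_iW_-(\mathbf{d})\ne0$, which I take from the limiting equation together with $\mu''\ne0$. It also needs $u^j_x(x_k)\ne0$ for large $k$. Lemma~\ref{lem. v_T(y) asymptotic behavior} gives only $|\nabla\mathbf{v}_T|\sim$ the profile, not each component. However, the $C^1$ convergence to $\mu'(y_0)\mathbf{d}$, whose components are all positive, gives componentwise positivity for large $k$. This is precisely why the $C^{1,\epsilon}$ convergence in Lemma~\ref{Compactness} is essential.
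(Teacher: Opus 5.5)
Your proposal is correct and follows essentially the same route as the paper. You argue by contradiction along $x_k\to a+$, pass to a blow-up/down limit via Lemma~\ref{Compactness}, and evaluate both ratios on the profile $\mu(y)\mathbf{d}$ from Lemma~\ref{lem. homogeneity of the limit} using $D_iW_-(\mathbf{d})=\lambda|d^i|^{p-2}d^i$ and the uniform comparability of the $d^i$. Two points are slightly more careful than the paper. You evaluate at an interior point $y_0$, whereas the paper takes $T_k=x_k$, which evaluates at the endpoint of $I$. You also check explicitly that $\mu'(y_0)>0$ and $\mu''(y_0)>0$, so that $u^j_x(x_k)\neq0$ and $D_jW(\mathbf{u}(x_k))\neq0$ eventually.

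One small correction: for $q_-\neq p$ the single fraction $|u^i_x|^{p-2}u^i_x/D_iW(\mathbf{u})$ is not scale-invariant. It picks up a factor $T$ (resp.\ $|T|$), because $\Delta_p$ carries one more derivative than $|u_x|^{p-2}u_x$. This factor is common to $i$ and $j$ and cancels in the $i:j$ ratio, so your argument is unaffected.
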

The proof of Corollary~\ref{cor3.1} will also be postponed to the next section.

\section{Weiss typed functions and almost homogeneity}
In this section, we apply the Weiss monotonicity formula \cite{W99} to study the almost homogeneity of the heteroclinical solution near the potential wells. This approach allows us to extract quantitative information on the asymptotic profile of the solution and derive the sharp decay behavior. As an application, we will provide the proof of Lemma~\ref{lem. homogeneity of the limit} and Corollary~\ref{cor3.1}.
\subsection{Weiss-typed monotonicity with an error term} We first define three types of Weiss functions and prove their almost monotonicity.
\begin{definition}\label{def. Weiss original definition}
Define the Weiss-typed function as follows:
\begin{itemize}
    \item \textbf{Case 1: $q_{-}<p$.} Like in Subsection~\ref{subsec. Blow-up or blow-down}, we assume $a=0$. As $T\to0+$, set
    \begin{align*}
        \Delta_{p,y}v^{i}_{T}(y)=&\Big(1+E_{-}(T^{\frac{p}{p-q_{-}}}\cdot\mathbf{v}_{T})\Big)\cdot D_{i}W_{-}(\mathbf{v}_{T})\\
        &+T^{\frac{p}{p-q_{-}}}\cdot W_{-}(\mathbf{v}_{T})\cdot D_{i}E_{-}(T^{\frac{p}{p-q_{-}}}\cdot\mathbf{v}_{T}).
    \end{align*}
    \item \textbf{Case 2: $q_{-}=p$.} As $T\to-\infty$, set
    \begin{equation*}
        \mathcal{W}(T)=e^{-pf(T)} \int_{-\infty}^T \left(\sum_{i=1}^m \frac{|\nabla v^i(x)|^p}{p}+W_{-}(\mathbf{v}(x))\right)dx,
    \end{equation*}
    where $f(T)$ is a function satisfying
    \begin{equation}\label{fS}
        \sum_{i=1}^m |v^i(T)|^p=e^{pf(T)}.
    \end{equation}
    \item \textbf{Case 3: $q_{-}>p$.} As $T\to-\infty$, set 
       \begin{align*}
        \mathcal{W}(T)=&|T|^{\frac{pq_{-}+p-q_{-}}{q_{-}-p}}\int_{-\infty}^{T} \left(\sum_{i=1}^m \frac{|\nabla v^i(x)|^p}{p}+W_{-}(\mathbf{v}(x)) \right) dx\\
        &-\left(\frac{p}{q_{-}-p}\right)^{p-1}\cdot |T|^{\frac{p^2}{q_{-}-p}} \sum_{i=1}^{m}\frac{|v^i(T)|^p}{p}.
    \end{align*}
\end{itemize}
\end{definition}

We have the following two lemmas for the rescaled function $\mathbf{v}_{T}(y)$ defined in Subsection~\ref{subsec. Blow-up or blow-down}. The proof follows from a direct computation via the chain rule and also from the homogeneity of $W_{-}(\mathbf{v})$. We omit their proofs and leave them to the readers.
\begin{lemma}\label{lem. rescaled Weiss function}
    Let $\mathbf{v}_{T}(y)$ be defined as in Subsection~\ref{subsec. Blow-up or blow-down} for $y\in I$. Then the Weiss functions defined in Definition~\ref{lem. v_T(y) asymptotic behavior} have the following alternative forms:
    \begin{itemize}\item \textbf{Case 1: $q_{-}<p$.} We have
    \begin{equation*}
        \mathcal{W}(T)=\int_0^1 \left(\sum_{i=1}^m \frac{|\nabla_y v_T^i(y)|^p}{p}+W_{-}(\mathbf{v}_T(y)) \right) dy-\left(\frac{p}{p-q_{-}}\right)^{p-1}\cdot\sum_{i=1}^{m}\frac{|v^i_T(1)|^p}{p}.
    \end{equation*}
    \item \textbf{Case 2: $q_{-}=p$.} We have
    \begin{equation*}
        \mathcal{W}(T)= \int_{-\infty}^0 \left(\sum_{i=1}^m \frac{|\nabla v^i_T(x)|^p}{p}+W_{-}(\mathbf{v}_T(y)) \right) dy.
    \end{equation*}
    \item \textbf{Case 3: $q_{-}>p$.} We have
 \begin{equation*}
        \mathcal{W}(T)= \int_{-\infty}^{-1} \left(\sum_{i=1}^m \frac{|\nabla_y v_T^i(y)|^p}{p}+W_{-}(\mathbf{v}_T(y)) \right) dy-\left(\frac{p}{q_{-}-p}\right)^{p-1}\cdot\sum_{i=1}^{m}\frac{|v^i_T(-1)|^p}{p}.
    \end{equation*}
    \end{itemize}
\end{lemma}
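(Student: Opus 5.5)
The plan is a change of variables $x\mapsto y$ in each Weiss function, using the scaling from Definition~\ref{def. blow-up/down process, v_T construction} and the $q_{-}$-homogeneity of $W_{-}$ from (A4). No PDE information is needed. In each case I will check three things: the gradient term and the potential term pick up the \emph{same} power of the scaling parameter; the Jacobian $dx/dy$ contributes one more power; and the total power cancels exactly the prefactor in Definition~\ref{def. Weiss original definition}, with the same cancellation for the boundary term.

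\textbf{Case 1.} The displayed formula in Definition~\ref{def. Weiss original definition} is evidently a misprint; it repeats the rescaled equation. I will read it as the natural analogue of Case 3:
\begin{equation*}
\mathcal{W}(T)=T^{-\frac{pq_{-}+p-q_{-}}{p-q_{-}}}\int_{0}^{T}\Big(\sum_{i=1}^{m}\frac{|\nabla v^{i}|^{p}}{p}+W_{-}(\mathbf{v})\Big)dx-\Big(\frac{p}{p-q_{-}}\Big)^{p-1}T^{-\frac{p^{2}}{p-q_{-}}}\sum_{i=1}^{m}\frac{|v^{i}(T)|^{p}}{p}.
\end{equation*}
Set $\alpha=\frac{p}{p-q_{-}}$ and $x=Ty$, so that $\mathbf{v}(x)=T^{\alpha}\mathbf{v}_{T}(y)$. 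The derivatives scale as $\nabla_{x}v^{i}=T^{\alpha-1}\nabla_{y}v^{i}_{T}$, hence $|\nabla_{x}v^{i}|^{p}=T^{p(\alpha-1)}|\nabla_{y}v^{i}_{T}|^{p}$. By homogeneity, $W_{-}(\mathbf{v})=T^{\alpha q_{-}}W_{-}(\mathbf{v}_{T})$. The key identity is
\begin{equation*}
p(\alpha-1)=\alpha q_{-}=\frac{pq_{-}}{p-q_{-}}.
\end{equation*}
Together with $dx=T\,dy$, the integral equals $T^{1+\frac{pq_{-}}{p-q_{-}}}=T^{\frac{pq_{-}+p-q_{-}}{p-q_{-}}}$ times $\int_{0}^{1}(\cdots)\,dy$, and this cancels the prefactor. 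For the boundary term, $|v^{i}(T)|^{p}=T^{\alpha p}|v^{i}_{T}(1)|^{p}$ with $\alpha p=\frac{p^{2}}{p-q_{-}}$, which again cancels.

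\textbf{Case 3.} The computation is identical with $x=|T|y$, $y\in(-\infty,-1]$, and $\mathbf{v}(x)=|T|^{\alpha}\mathbf{v}_{T}(y)$, where now $\alpha<0$. The integral over $(-\infty,T]$ becomes $|T|^{1+\frac{pq_{-}}{p-q_{-}}}=|T|^{-\frac{pq_{-}+p-q_{-}}{q_{-}-p}}$ times the integral over $(-\infty,-1]$, and this is cancelled by $|T|^{\frac{pq_{-}+p-q_{-}}{q_{-}-p}}$. Likewise $|v^{i}(T)|^{p}=|T|^{\frac{p^{2}}{p-q_{-}}}|v^{i}_{T}(-1)|^{p}$ is cancelled by $|T|^{\frac{p^{2}}{q_{-}-p}}$.

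\textbf{Case 2.} Set $\lambda=|\mathbf{v}(T)|_{l_{p}}=e^{f(T)}$ by \eqref{fS}, and $x=y+T$, so that $dx=dy$ and $\mathbf{v}(x)=\lambda\mathbf{v}_{T}(y)$. Since $W_{-}$ is $p$-homogeneous, both $|\nabla v^{i}|^{p}$ and $W_{-}(\mathbf{v})$ equal $\lambda^{p}=e^{pf(T)}$ times the corresponding quantity for $\mathbf{v}_{T}$. This cancels the prefactor $e^{-pf(T)}$, and the domain $(-\infty,T]$ maps to $(-\infty,0]$.

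The only possible obstacle is bookkeeping. One must fix the correct reading of the misprinted Case~1 definition, and track signs carefully in Case~3 (where $T<0$, $|T|^{\alpha}$ has $\alpha<0$, and the orientation of $x=|T|y$ maps $(-\infty,T]$ onto $(-\infty,-1]$). The substitutions themselves are routine.
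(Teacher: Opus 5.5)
Your proposal is correct and follows the route the paper has in mind. The paper omits this proof, saying only that it follows from the chain rule and the homogeneity of $W_{-}$; your change of variables, driven by the exponent identity $p(\alpha-1)=\alpha q_{-}$ with $\alpha=\frac{p}{p-q_{-}}$, is exactly that computation. Your reading of the misprinted Case~1 definition is the natural one: it mirrors Case~3 and produces the boundary coefficient $\big(\frac{p}{p-q_{-}}\big)^{p-1}$ that is used in the proof of Lemma~\ref{lem. Weiss Monotonicity}.
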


\begin{lemma}\label{lem. v dot T (y)}
    Let $\mathbf{v}_{T}(y)$ be defined as in Subsection~\ref{subsec. Blow-up or blow-down} for $y\in I$. Let the derivative of $\mathbf{v}_{T}$ in $T$ be denoted as $\dot{\textbf{v}}_{T}(y)=\big(\dot{v}_{T}^{1}(y),\cdots,\dot{v}_{T}^{m}(y)\big)$, where
    \begin{equation*}
        \dot{\textbf{v}}_{T}(y)=\frac{d}{dT}\mathbf{v}_{T}(y)=\lim_{t\to0}\frac{\mathbf{v}_{T+t}(y)-\mathbf{v}_{T}(y)}{t}.
    \end{equation*}
    Then, we have the following identities:
    \begin{itemize}\item \textbf{Case 1: $q_{-}<p$.} We have
    \begin{equation*}
        \dot{\textbf{v}}_{T}(y)=\frac{1}{T}\left(y\nabla_y \mathbf{v}_T(y)-\frac{p}{p-q_{-}} \mathbf{v}_T(y)  \right).
    \end{equation*}
    \item \textbf{Case 2: $q_{-}=p$.} We have
    \begin{equation*}
\dot{\mathbf{v}}_T(y)=\nabla_y \mathbf{v}_T(y)-f'(T)\mathbf{v}_T(y),
\end{equation*}
    \item \textbf{Case 3: $q_{-}>p$.} We have
   \begin{equation*}
        \dot{\textbf{v}}_{T}(y)=\frac{1}{|T|}\left(-y\nabla_y \mathbf{v}_T(y)-\frac{p}{p-q_{-}} \mathbf{v}_T(y)  \right).
    \end{equation*}
\end{itemize}
\end{lemma}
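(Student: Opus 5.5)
The plan is a direct chain-rule computation from Definition~\ref{def. blow-up/down process, v_T construction}, done separately for each case. In each case $\mathbf{v}_T(y)$ is a product of a scalar factor depending only on $T$ and $\mathbf{v}$ evaluated at a point depending on $(T,y)$. Differentiating in $T$ at fixed $y$ therefore gives two terms: the derivative of the prefactor, and the prefactor times $\mathbf{v}'$ times the $T$-derivative of the inner argument. The second term is then rewritten through $\nabla_y\mathbf{v}_T$, using the spatial chain rule applied to the same definition. Throughout I use that $\mathbf{v}$ is $C^1$ on $(a,b)$ (Lemma~\ref{lem. existence}), so all derivatives exist pointwise for $y$ in the interior of $I$.

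\textbf{Case 1 ($q_-<p$, $T>0$).} Set $\alpha_-=\frac{p}{p-q_-}$. Then
\[
\frac{d}{dT}T^{-\alpha_-}=-\alpha_- T^{-1}\cdot T^{-\alpha_-},\qquad \frac{d}{dT}\mathbf{v}(Ty)=y\,\mathbf{v}'(Ty),\qquad \nabla_y\mathbf{v}_T(y)=T^{1-\alpha_-}\mathbf{v}'(Ty).
\]
Combining these gives $\dot{\mathbf{v}}_T=\frac1T\big(y\nabla_y\mathbf{v}_T-\alpha_-\mathbf{v}_T\big)$, which is the stated identity.

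\textbf{Case 2 ($q_-=p$).} Write $|\mathbf{v}(T)|_{l_p}=e^{f(T)}$ as in \eqref{fS}; $f$ is differentiable because $\mathbf{v}(T)\neq0$ on $(a,b)$. Then
\[
\frac{d}{dT}e^{-f(T)}=-f'(T)\,e^{-f(T)},\qquad \frac{d}{dT}\mathbf{v}(y+T)=\mathbf{v}'(y+T)=e^{f(T)}\nabla_y\mathbf{v}_T(y).
\]
Adding the two contributions yields $\dot{\mathbf{v}}_T=\nabla_y\mathbf{v}_T-f'(T)\mathbf{v}_T$.

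\textbf{Case 3 ($q_->p$, $T<0$).} This is the step that needs care. The only subtlety is the sign bookkeeping, because $\frac{d|T|}{dT}=-1$ and $\alpha_-<0$. I will differentiate the prefactor $|T|^{-\alpha_-}$ and the argument $|T|y$ separately, and then express the result through $\nabla_y\mathbf{v}_T(y)=|T|^{1-\alpha_-}\mathbf{v}'(|T|y)$.

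For the argument, $\frac{d}{dT}\mathbf{v}(|T|y)=-y\,\mathbf{v}'(|T|y)$. This produces the term $-\frac{1}{|T|}y\nabla_y\mathbf{v}_T$, matching the statement.

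For the prefactor, the coefficient of $\frac{1}{|T|}\mathbf{v}_T$ is $\mp\alpha_-$, with the sign set by how $\frac{d}{dT}|T|^{-\alpha_-}$ is evaluated. The literal computation,
\[
\frac{d}{dT}|T|^{-\alpha_-}=(-\alpha_-)|T|^{-\alpha_--1}\cdot(-1)=+\alpha_-|T|^{-1}|T|^{-\alpha_-},
\]
gives $+\alpha_-$. The stated identity, with coefficient $-\frac{p}{p-q_-}$, is the one obtained when the scaling is normalized with the positive exponent $\frac{p}{q_--p}$ recorded alongside the $T$-flow, i.e.\ the variable the Weiss function of Case~3 is built on.

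So in this case I would state the normalization explicitly and recompute. The main obstacle is reconciling this sign with the convention of Definition~\ref{def. blow-up/down process, v_T construction}. If the literal reading is kept, the identity holds with $+\frac{p}{p-q_-}$, and the later Weiss computation must use that sign consistently. I would check it against the $|T|^{\frac{p^2}{q_--p}}$ boundary term in Definition~\ref{def. Weiss original definition}, where the two sign choices can be distinguished.
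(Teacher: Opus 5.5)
Your approach is the paper's own (the paper only says the identities follow from a direct chain-rule computation and omits the details), and your Cases 1 and 2 are exactly right. In Case 3 your literal computation is also right, and it shows that the identity as printed is false. From $\mathbf v_T(y)=|T|^{-\frac{p}{p-q_-}}\mathbf v(|T|y)$ with $T<0$ one gets
\[
\dot{\mathbf v}_T(y)=\frac{1}{|T|}\Big(-y\nabla_y\mathbf v_T(y)+\frac{p}{p-q_-}\,\mathbf v_T(y)\Big),
\]
so the statement has a sign typo in the $\mathbf v_T$ term. Differentiating in $|T|$ instead of $T$ flips both signs, so it does not produce the printed formula either.

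What is wrong is your attempted reconciliation. Since $-\frac{p}{p-q_-}=\frac{p}{q_--p}$, the prefactor in the definition already is $|T|^{\frac{p}{q_--p}}$, i.e.\ the normalization with the positive exponent $\frac{p}{q_--p}$. The Case~3 Weiss function, with boundary weight $|T|^{\frac{p^2}{q_--p}}$, is built on exactly this normalization. The only prefactor that yields the printed sign is $|T|^{\frac{p}{p-q_-}}$. For that prefactor $\mathbf v_T\sim|T|^{\frac{2p}{p-q_-}}|y|^{\frac{p}{p-q_-}}\to0$, so it is not a blow-down at all. There is nothing to reconcile; commit to the $+$ sign.

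You can settle this decisively instead of leaving it open. For an exactly homogeneous profile $\mathbf v(x)=|x|^{\alpha_-}\mathbf d$ one has $\mathbf v_T\equiv|y|^{\alpha_-}\mathbf d$, hence $\dot{\mathbf v}_T\equiv0$. On the other hand, $-y\nabla_y\mathbf v_T=-\alpha_-\mathbf v_T$ for $y<0$, so only the coefficient $+\alpha_-$ makes the right-hand side vanish.

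The Weiss check you propose points the same way. Set $\mathbf a=\nabla_y\mathbf v_T(-1)$, $\mathbf b=\mathbf v_T(-1)$ and $c=\frac{p}{q_--p}$. The correct formula gives $\dot{\mathbf v}_T(-1)=\frac{1}{|T|}(\mathbf a-c\,\mathbf b)$, so the boundary term from the integration by parts is
\[
\frac{1}{|T|}\sum_{i=1}^{m}\big(a^i-c\,b^i\big)\Big(|a^i|^{p-2}a^i-c^{p-1}|b^i|^{p-2}b^i\Big)\geq0 .
\]
It vanishes iff $\mathbf a=c\,\mathbf b$, which is precisely the $\mathcal P$ of Case~3 in Lemma~\ref{lem. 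Weiss Monotonicity}. With the printed sign the first factor would be $a^i+c\,b^i$, and the expression would not be sign-definite.

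So the later argument implicitly uses your sign, and your proof is complete once Case~3 is stated with $+\frac{p}{p-q_-}$.
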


We now state and prove the following Weiss-typed monotonicity formula. In comparison to \cite{W99}, our monotonicity formula contains a small negative error term. This additional error term arises from
the fact that the potential function $W(\cdot)$ is not exactly homogeneous near
the potential wells $\pm\mathbf{e}$.
\begin{lemma}\label{lem. Weiss Monotonicity}
Let $\mathcal{W}(T)$ be   the Weiss-type function defined as in Lemma \ref{lem. rescaled Weiss function}. Then, there exists a uniform constant $C=C\big(p,W(\cdot)\big)$ and a continuous $p$-degree homogeneous function
\begin{equation*}
    \mathcal{P}(\mathbf{a},\mathbf{b}):\mathbb{R}^{m}\times(\mathbb{R}^{m}\setminus\{0\})\to[0,+\infty),
\end{equation*}
which also depends on the exponents $(p,q_{-})$, such that the following holds:
\begin{itemize}
    \item[(1)] If $q_{-}<p$, then $\mathcal{P}(\mathbf{a},\mathbf{b})=0$ if and only if $\mathbf{a}=\frac{p}{p-q_{-}}\cdot\mathbf{b}$, and
    \begin{equation*}
        \frac{d}{dt} \mathcal{W}(T)\geq\frac{1}{T}\cdot\mathcal{P}\Big(\nabla_{y}\mathbf{v}_{T}(1),\mathbf{v}_{T}(1)\Big)-C T^{\frac{q_-}{p-q_-}}\geq-C T^{\frac{q_-}{p-q_-}}.
    \end{equation*}
    \item[(2)] If $q_{-}=p$, then $\mathcal{P}(\mathbf{a},\mathbf{b})=0$ if and only if $\mathbf{a}\parallel\mathbf{b}$, and
    \begin{equation*}
        \frac{d}{dt} \mathcal{W}(T)\geq\mathcal{P}\Big(\nabla_{y}\mathbf{v}_{T}(0),\mathbf{v}_{T}(0)\Big)-C (\theta_1)^T\geq-C (\theta_1)^T.
    \end{equation*}
    \item[(3)] If $q_{-}>p$, then $\mathcal{P}(\mathbf{a},\mathbf{b})=0$ if and only if $\mathbf{a}=\frac{p}{q_{-}-p}\cdot\mathbf{b}$, and
    \begin{equation*}
        \frac{d}{dt} \mathcal{W}(T)\geq\frac{1}{|T|}\cdot\mathcal{P}\Big(\nabla_{y}\mathbf{v}_{T}(-1),\mathbf{v}_{T}(-1)\Big)-C |T|^{\frac{q_-}{p-q_-}}\geq-C |T|^{\frac{q_-}{p-q_-}}.
    \end{equation*}
\end{itemize}
\end{lemma}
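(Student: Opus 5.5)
We prove the lemma by differentiating the rescaled forms from Lemma~\ref{lem. rescaled Weiss function} in $T$, substituting $\dot{\mathbf{v}}_T$ from Lemma~\ref{lem. v dot T (y)}, and integrating by parts with the rescaled equations of Lemma~\ref{lem. rescaled equation for v_T}. We treat Case~1 in detail; Cases~2 and~3 follow the same scheme, with the boundary point $y=1$ replaced by $y=0$ or $y=-1$ and the sign conventions adjusted. Write $\alpha=\frac{p}{p-q_-}$ and $w^i=|\nabla_y v^i_T|^{p-2}\nabla_y v^i_T$. Differentiation under the integral gives
\begin{equation*}
\frac{d}{dT}\mathcal{W}(T)=\int_0^1\sum_i\Big(w^i\,\nabla_y\dot v^i_T+D_iW_-(\mathbf{v}_T)\,\dot v^i_T\Big)dy-\alpha^{p-1}\sum_i|v^i_T(1)|^{p-2}v^i_T(1)\,\dot v^i_T(1).
\end{equation*}
Integrate the first term by parts. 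The boundary term at $y=0$ vanishes because $\mathbf{v}_T(y)\sim y^{\alpha}$ and $|\nabla\mathbf{v}_T|\sim y^{\alpha-1}$ (Lemma~\ref{lem. v_T(y) asymptotic behavior}). This yields $\sum_i w^i(1)\dot v^i_T(1)$ plus the bulk term $\int\sum_i\big(D_iW_--\Delta_{p}v^i_T\big)\dot v^i_T$.

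By Lemma~\ref{lem. rescaled equation for v_T}, the bulk integrand equals $-\big[E_-(\cdot)D_iW_-(\mathbf{v}_T)+T^{\alpha}W_-(\mathbf{v}_T)D_iE_-(\cdot)\big]\dot v^i_T$. Since $E_-\in C^2$ vanishes at $0$, we have $|E_-(T^\alpha\mathbf{v}_T)|\lesssim T^{\alpha}|\mathbf{v}_T|$. Combining this with $|\dot{\mathbf{v}}_T|\lesssim T^{-1}(y|\nabla\mathbf{v}_T|+|\mathbf{v}_T|)$ and the bounds of Lemma~\ref{lem. v_T(y) asymptotic behavior}, the bulk term is $\gtrsim -T^{\alpha-1}=-T^{\frac{q_-}{p-q_-}}$. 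This is exactly the error term. The key structural point is that the exactly homogeneous part contributes nothing to the bulk, because $\mathbf{v}_T$ solves the Euler--Lagrange equation up to the $E_-$ corrections.

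It remains to analyze the boundary term at $y=1$. Set $\mathbf{a}=\nabla_y\mathbf{v}_T(1)$ and $\mathbf{b}=\mathbf{v}_T(1)$, so that $\dot{\mathbf{v}}_T(1)=T^{-1}(\mathbf{a}-\alpha\mathbf{b})$. The boundary term becomes
\begin{equation*}
\frac1T\sum_i\Big(|a^i|^{p-2}a^i-|\alpha b^i|^{p-2}\alpha b^i\Big)\big(a^i-\alpha b^i\big)=:\frac1T\,\mathcal{P}(\mathbf{a},\mathbf{b}).
\end{equation*}
This expression is $p$-homogeneous and nonnegative by the monotonicity of $s\mapsto|s|^{p-2}s$. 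It vanishes iff $\mathbf{a}=\alpha\mathbf{b}$, by strict monotonicity of that map. Case~3 is identical with $\frac{p}{q_--p}$ and $|T|$; the sign flips coming from $-y\nabla\mathbf{v}_T$ and the negative domain compensate each other.

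The main obstacle is Case~2, where there is no subtracted boundary term and $f'(T)$ plays the role of the unknown exponent. Here $\dot{\mathbf{v}}_T=\nabla\mathbf{v}_T-f'(T)\mathbf{v}_T$, and the same computation produces a boundary term $\sum_i w^i(0)\big(a^i-f'b^i\big)$ at $y=0$. The normalization $|\mathbf{v}_T(0)|_{l_p}=1$ forces $\sum_i|b^i|^{p-2}b^i\dot v^i_T(0)=0$, that is, $f'(T)=\sum_i|b^i|^{p-2}b^ia^i$. One must also account for $\frac{d}{dT}$ of the integral over the fixed domain $(-\infty,0]$, which contributes the integrand at $y=0$; the latter can be rewritten using the speed identity \eqref{eq. speed identity}, i.e.\ $\frac{p-1}{p}\sum|a^i|^p=W_-(\mathbf{b})$ up to $E_-$ errors.

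We would then define
\begin{equation*}
\mathcal{P}(\mathbf{a},\mathbf{b})=|\mathbf{a}|_{l_p}^p|\mathbf{b}|_{l_p}^{p(p-1)}\cdots-\Big(\sum_i|b^i|^{p-2}b^ia^i\Big)^{p}\cdots,
\end{equation*}
normalized to be $p$-homogeneous, and show that it is nonnegative with equality iff $\mathbf{a}\parallel\mathbf{b}$. This is a H\"older-equality argument, $\sum|b^i|^{p-1}|a^i|\le|\mathbf{b}|^{p-1}_{l_p}|\mathbf{a}|_{l_p}$, with equality iff $\mathbf{a}\parallel\mathbf{b}$. Here the components are positive, so signs cause no trouble.

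The error terms are $O(|\mathbf{v}(T)|_{l_p})=O(\theta_1^{T})$ by Lemma~\ref{lem. (non-)existence of free boundary}. Verifying that all the $O(1)$ boundary pieces assemble exactly into this H\"older deficit is the step we expect to require the most care.
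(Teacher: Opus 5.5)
Your Case~1 is the paper's argument: the same integration by parts on $[0,1]$, the same bulk error of order $T^{\frac{q_-}{p-q_-}}$ coming from $E_-$, and the same monotone-operator expression for $\mathcal{P}$. Your Case~3 remark is also right if you compute $\dot{\mathbf v}_T$ directly. It is $\dot{\mathbf v}_T=\frac{1}{|T|}\big(-y\nabla_y\mathbf v_T+\frac{p}{p-q_-}\mathbf v_T\big)$, so that $\dot{\mathbf v}_T(-1)=\frac{1}{|T|}\big(\mathbf a-\frac{p}{q_--p}\mathbf b\big)$. The Case~3 formula in the lemma on $\dot{\mathbf v}_T$ has the opposite sign on the second term, and used literally it would destroy the monotone structure.

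The gap is Case~2, and you already wrote down everything needed to close it: the boundary term $J_2=\sum_i|a^i|^{p-2}a^i\big(a^i-f'(T)b^i\big)$ and $f'(T)=\sum_j|b^j|^{p-2}b^ja^j$. Substituting one into the other gives
\begin{equation*}
J_2=|\mathbf a|_{l_p}^p-\Big(\sum_i|a^i|^{p-2}a^ib^i\Big)\Big(\sum_j|b^j|^{p-2}b^ja^j\Big),\qquad |\mathbf b|_{l_p}=1.
\end{equation*}
The two H\"older inequalities $\big|\sum_i|a^i|^{p-2}a^ib^i\big|\le|\mathbf a|_{l_p}^{p-1}|\mathbf b|_{l_p}$ and $\big|\sum_j|b^j|^{p-2}b^ja^j\big|\le|\mathbf b|_{l_p}^{p-1}|\mathbf a|_{l_p}$ then give $J_2\ge0$, with equality iff $\mathbf a\parallel\mathbf b$. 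This is exactly the paper's $\mathcal P$, with the product divided by $|\mathbf b|_{l_p}^p$ for homogeneity. No speed identity is involved, and $J_1=O(\theta_1^T)$ as you say.

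What you propose instead does not work. Adding ``the integrand at $y=0$'' to $J_2$ is a double count. In the rescaled form the domain $(-\infty,0]$ is fixed, so all the $T$-dependence is already carried by $\dot{\mathbf v}_T$. The integrand at $y=0$ arises only if you differentiate the unrescaled form $e^{-pf(T)}\int_{-\infty}^T(\cdots)\,dx$. There it comes paired with $-pf'(T)\mathcal W(T)$, which you would evaluate by testing the equation with $\mathbf v_T$ and using Euler's relation $\sum_iv^iD_iW_-(\mathbf v)=pW_-(\mathbf v)$; this gives $p\,\mathcal W\approx\sum_ib^i|a^i|^{p-2}a^i$. Together with the speed identity, that second route reproduces the same $\mathcal P$.

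Mixing the two routes gives $J_2+\frac1p|\mathbf a|_{l_p}^p+W_-(\mathbf b)$. Since $|\mathbf a|\sim1$, this is bounded below by a positive constant. It therefore cannot vanish when $\mathbf a\parallel\mathbf b$, and integrating it in $T$ would contradict the uniform boundedness of $\mathcal W$.

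Your tentative $\mathcal P$, built on $\big(\sum_i|b^i|^{p-2}b^ia^i\big)^p$ with unspecified factors, is also a different H\"older deficit from the one the computation produces. You give no argument that the actual boundary term dominates it. As written, Case~2 is not proved.
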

\begin{proof}
The proof of Lemma~\ref{lem. Weiss Monotonicity} (especially Case 1) is inspired by the argument in  \cite{W99}.
We provide the details for
Cases 1 and 2. The proof of Case 3 is analogous to that of Case 1 and is
therefore omitted.

{\bf{Case 1:  $q_{-}<p$.}} Differentiating the expression in Lemma~\ref{lem. rescaled Weiss function} yields
\begin{align*}
    \frac{d}{dT}\mathcal{W}(T) =&\int_0^1 \sum_{i=1}^m |\nabla_y v_T^i(y)|^{p-2}<\nabla_y v_T^i(y), \nabla_y \dot{v}_T^i(y)> dy\\
    &+ \sum_{i=1}^m \int_0^1 D_i W_{-}(\mathbf{v}_T(y)) \dot{v}_T^i(y) dy-(\frac{p}{p-q_{-}})^{p-1}\cdot\sum_{i=1}^m | v_T^i(1)|^{p-2} v_T^i(1) \dot{v}_T^i(1).
\end{align*}
After integration by parts, we obtain
\begin{align*}
    \frac{d}{dT}\mathcal{W}(T)=&\sum_{i=1}^{m}\int_{0}^{1}\dot{v}_{T}^{i}(y)\cdot\Big\{-\Delta_{p,y}v_{T}^{i}(y)+D_{i}W_{-}(\mathbf{v}_T(y))\Big\}dy\\
    &+\sum_{i=1}^{m}\dot{v}_{T}^{i}(1)\Big\{|\nabla_{y}v_{T}^{i}(1)|^{p-2}\nabla_{y}v_{T}^{i}(1)-(\frac{p}{p-q_{-}})^{p-1}\cdot|v_{T}^{i}(1)|^{p-2}v_{T}^{i}(1)\Big\}\\
    :=&I_1+I_2.
\end{align*}

It follows from Lemma~\ref{lem. v_T(y) asymptotic behavior} and Lemma~\ref{lem. v dot T (y)} that
\begin{equation*}
    |\mathbf{v}_{T}(y)|\leq C\cdot y^{\frac{p}{p-q_{-}}}\mbox{ and }|\dot{\mathbf{v}}_{T}(y)|\leq \frac{C}{T}\cdot y^{\frac{p}{p-q_{-}}},\quad\mbox{for }y\in I=[0,1].
\end{equation*}
Then, by the rescaled equation for $\mathbf{v}_{T}$ in Lemma~\ref{lem. rescaled equation for v_T} (1) and by the assumption (A5) for the potential $W(\cdot)$, we have
\begin{equation*}
    \Big|\Delta_{p,y}v^{i}_{T}(y)-D_{i}W_{-}(\mathbf{v}_{T})\Big|\leq C\cdot(T\cdot y^{q_{-}})^{\frac{p}{p-q_{-}}}.
\end{equation*}
Therefore,
\begin{equation*}
    |I_{1}|\leq\int_{0}^{1}\Big(\frac{C}{T}\cdot y^{\frac{p}{p-q_{-}}}\Big)\cdot\Big(C\cdot(T\cdot y^{q_{-}})^{\frac{p}{p-q_{-}}}\Big)dy\leq C\cdot T^{\frac{q_{-}}{p-q_{-}}}.
\end{equation*}

The term $I_{2}$ is to be written as $I_{2}=\frac{1}{T}\cdot\mathcal{P}\Big(\nabla_{y}\mathbf{v}_{T}(1),\mathbf{v}_{T}(1)\Big)$, where
\begin{equation*}
    \mathcal{P}(\mathbf{a},\mathbf{b})=\sum_{i=1}^{m}\Big(a^{i}-\frac{p}{p-q_{-}}b^{i}\Big)\Big(|a^{i}|^{p-2}a^{i}-(\frac{p}{p-q_{-}})^{p-1}|b^{i}|^{p-2}b^{i}\Big)\geq0.
\end{equation*}
One can easily verify that $\mathcal{P}(\mathbf{a},\mathbf{b})=0$ if and only if $\mathbf{a}=\frac{p}{p-q_{-}}\mathbf{b}$.

{\bf{Case 2:  $q_{-}=p$.}} Similar to the previous case, we get
$$
\begin{aligned}
\frac{d}{dT}\mathcal{W}(T)=&-\int_{-\infty}^0 \dot{v}_T^i(y)   \left(-\Delta_{p,y}v_T^i(y) +D_iW_{-}( v_T^i(y))      \right)dy \\
&+\sum_{i=1}^m \dot{v}_T^i(0) |\nabla_y v_T^i(0)|^{p-2} \nabla_y v_T^i(0)\\
:=&J_1+J_2.
\end{aligned}
$$

We first estimate the term $J_1$. By \eqref{fS}, one has
\begin{equation*}
    \sum_{i=1}^m |v_T^i(0)|^p= \sum_{i=1}^m e^{-pf(T)}{|v^i(T)|^p}=1.
\end{equation*}
In addition, differentiating both sides of \eqref{fS} with respect to $T$ yields that
\begin{equation}\label{eq. f'(T) expression}
    f'(T)=e^{-pf(T)}\sum_{i=1}^m |v^i(T)|^{p-1}  \nabla_x v^i(T)=\sum_{i=1}^m |v_T^i(0)|^{p-1}\nabla_y v^i_T(0).
\end{equation}
It then follows from Lemma~\ref{lem. v_T(y) asymptotic behavior} that $|f'(T)|\leq C$. Hence, by Lemma~\ref{lem. v dot T (y)}, we have
\begin{equation*}
    |\mathbf{v}_{T}(y)|\le C (\theta_{1})^{y}\mbox{ and } |\dot{\mathbf{v}}_{T}(y)|\leq C(\theta_{1})^{y},\quad\mbox{for }y\in I=(-\infty,0].
\end{equation*}
Then, by the rescaled equation for $\mathbf{v}_{T}$ in Lemma~\ref{lem. rescaled equation for v_T} (2), Lemma \ref{lem. (non-)existence of free boundary} and  the assumption (A5) for the potential $W(\cdot)$, we have
\begin{equation*}
    \Big|\Delta_{p,y}v^{i}_{T}(y)-D_{i}W_{-}(\mathbf{v}_{T})\Big|\leq C (\theta_1)^{T+q_-y}.
\end{equation*}
This implies the following estimate for $|J_{1}|$:
\begin{equation*}
    |J_{1}|\leq\int_{-\infty}^0\Big(C(\theta_{1})^{y}\Big)\cdot\Big(C(\theta_1)^{T+q_-y}\Big)dy\leq C(\theta_1)^T.
\end{equation*}

Next we prove the positivity of $J_{2}$. By Lemma~\ref{lem. v dot T (y)} and the expression \eqref{eq. f'(T) expression}, we have
\begin{equation*}
J_2=\sum_{i=1}^{m}\left(\nabla_{y}v_{T}^{i}(0)-f'(T)v_{T}^{i}(0)\right)|\nabla_{y}v_{T}^{i}(0)|^{p-2}\nabla_{y}v_{T}^{i}(0)=\mathcal{P}\Big(\nabla_{y}\mathbf{v}_{T}(0),\mathbf{v}_{T}(0)\Big),
\end{equation*}
where
\begin{equation*}
    \mathcal{P}(\mathbf{a},\mathbf{b})=|\mathbf{a}|_{l_{p}}^{p}-\frac{1}{|\mathbf{b}|_{l_{p}}^{p}}\sum_{i=1}^{m}|a^{i}|^{p-2}a^{i}b^{i}\cdot\sum_{j=1}^{m}|b^{j}|^{p-2}b^{j}a^{j}.
\end{equation*}
By the H\"older inequality, one has
\begin{equation*}
    \sum_{i=1}^{m}|a^{i}|^{p-2}a^{i}b^{i}\leq|\mathbf{a}|_{l_{p}}^{p-1}\cdot|\mathbf{b}|_{l_{p}}\mbox{ and }\sum_{j=1}^{m}|b^{j}|^{p-2}b^{j}a^{j}\leq|\mathbf{b}|_{l_{p}}^{p-1}\cdot|\mathbf{a}|_{l_{p}}.
\end{equation*}
Then, we see that $\mathcal{P}(\mathbf{a},\mathbf{b})\geq0$ and equality holds if and only if $\mathbf{a}\parallel\mathbf{b}$.

{\bf{Case 3:  $q_{-}>p$.}} The proof is similar to Case 1, and we omit the details. 
\end{proof}

\subsection{Analysis of the blow-up/down limits}
In this subsection, we apply the Weiss-typed monotonicity formulas to classify the blow-up/down limits. As a consequence, we are able to prove Lemma~\ref{lem. homogeneity of the limit} and Corollary~\ref{cor3.1} below.
\begin{proof}[Proof of Lemma~\ref{lem. homogeneity of the limit}]
The proof is divided into three steps.

\textbf{Step 1: Consequence of the Weiss-typed monotonicity formula.} We claim that $\mathbf{v}_{a+}$ satisfies the following homogeneity properties:
\begin{itemize}
    \item[(1)] If $q_{-}<p$ and $a=0$, then
    \begin{equation*}
        y\cdot\nabla_{y}\mathbf{v}_{a+}(y)\equiv\frac{p}{p-q_{-}}\cdot\mathbf{v}_{a+}(y)\quad\mbox{for all }y\in(0,1];
    \end{equation*}
    \item[(2)] If $q_{-}=p$ and $a=-\infty$, then
    \begin{equation*}
        \nabla_{y}\mathbf{v}_{a+}(y)\parallel\mathbf{v}_{a+}(y)\quad\mbox{for all }y\in(-\infty,0];
    \end{equation*}
    \item[(3)] If $q_{-}>p$ and $a=-\infty$, then
    \begin{equation*}
        y\cdot\nabla_{y}\mathbf{v}_{a+}(y)\equiv\frac{p}{q_{-}-p}\cdot\mathbf{v}_{a+}(y)\quad\mbox{for all }y\in(-\infty,-1].
    \end{equation*}
\end{itemize}

We will only prove the claim in the first case ($q_{-}<p$). 
Let $T_k\to a+=0+$. By Lemma~\ref{Compactness}, after passing to a subsequence
(still denoted by $T_k$), we have
\[
\mathbf v_{T_k}\to\mathbf v_{a+}
\quad\text{in }C^{1,\epsilon}_{\rm loc}(I),
\]
where $\mathbf v_{a+}$ satisfies the limiting equation
\eqref{eq. limiting equation for the blow-up/down limit}. Moreover, by
Lemma~\ref{lem. v_T(y) asymptotic behavior},
\begin{equation*}
|\mathbf v_T(y)|\leq C y^{\frac{p}{p-q_-}}\mbox{ and }|\nabla_{y}\mathbf{v}_{T}(y)|\leq C y^{\frac{q_-}{p-q_-}}\quad\mbox{for all }y\in(0,1),
\end{equation*}
where the constant $C$ is independent of $T$. 

On the other hand, since $W_{-}$ is homogeneous of degree $q_{-}$, then it follows from the expression in Lemma~\ref{lem. rescaled Weiss function} that $|\mathcal{W}(T)|$ is uniformly bounded as $T\to a+$. 
Therefore,
by Lemma \ref{lem. Weiss Monotonicity}, we obtain
\begin{equation*}
    \frac{1}{T}\mathcal{P}\left(\nabla_{y}\mathbf{v}_{T}(1),\mathbf{v}_{T}(1)\right)\leq\frac{d}{dT}\mathcal{W}(T)+C T^{\frac{q_-}{p-q_-}} .
\end{equation*}
For any fixed $T_k<T_l$, integrating the inequality in Lemma~\ref{lem. Weiss Monotonicity} over $(T_k,T_l)$ gives
\begin{equation*}
\int_{T_k}^{T_l}\frac{1}{T}\mathcal{P}\big(\nabla_{y}\mathbf{v}_{T}(1),\mathbf{v}_{T}(1)\big)dT\leq\mathcal{W}(T_{l})-\mathcal{W}(T_{k})+CT_{l}^{\frac{p}{p-q_{-}}}.
\end{equation*}
Since $|\mathcal{W}(T)|$ is uniformly bounded, we conclude that
\begin{equation*}
    \int_{0}^{1}\frac{1}{T}\mathcal{P}\left(\nabla_{y}\mathbf{v}_{T}(1),\mathbf{v}_{T}(1)\right)dT<+\infty.
\end{equation*}
It then follows from the Dominated Convergence Theorem that
\begin{equation}\label{eq. sending T_k to zero, integration of P}
    \lim_{k\to\infty}\int_{0}^{T_{k}}\frac{1}{T}\mathcal{P}\left(\nabla_{y}\mathbf{v}_{T}(1),\mathbf{v}_{T}(1)\right)dT=0.
\end{equation}
Notice that the rescaled functions defined in
Definition~\ref{def. blow-up/down process, v_T construction} satisfy the scaling
property
\[
(\mathbf v_{T_k})_S(z)=\mathbf v_{T_kS}(z).
\]
Taking $S=T/T_k$ in \eqref{eq. sending T_k to zero, integration of P}, we obtain
\begin{equation*}
    \lim_{k\to\infty}\int_{0}^{1}\frac{1}{S}\mathcal{P}\Big(\nabla_{y}(\mathbf{v}_{T_{k}})_{S}(1),(\mathbf{v}_{T_{k}})_{S}(1)\Big)dS=0.
\end{equation*}
Since $\mathbf{v}_{T_{k}}\to\mathbf{v}_{a+}$ in the $C^{1,\epsilon}([0,1])$ sense, it follows that for every $h>0$,
\begin{equation*}
    \lim_{k\to\infty}\sup_{S\in[h,1]}\Big|\mathcal{P}\Big(\nabla_{y}(\mathbf{v}_{T_{k}})_{S}(1),(\mathbf{v}_{T_{k}})_{S}(1)\Big)-\mathcal{P}\Big(\nabla_{y}(\mathbf{v}_{a+})_{S}(1),(\mathbf{v}_{a+})_{S}(1)\Big)\Big|=0.
\end{equation*}
As a result, one has
\begin{equation*}
    \int_{h}^{1}\frac{1}{S}\mathcal{P}\Big(\nabla_{y}(\mathbf{v}_{a+})_{S}(1),(\mathbf{v}_{a+})_{S}(1)\Big)dS=0,\quad\mbox{for every }h>0.
\end{equation*}
Consequently, one has \[
\mathcal P
\Big(
\nabla_y(\mathbf v_{a+})_S(1),
(\mathbf v_{a+})_S(1)
\Big)
\equiv0,
\qquad S\in(0,1].
\]. Using the equality condition in Lemma~\ref{lem. Weiss Monotonicity}, we have proven the first case of the claim. The other two cases can be proven analogously.

\textbf{Step 2: Homogeneity of $\mathbf{v}_{a+}$.} 
By \textbf{Step 1}, the limiting function $\mathbf v_{a+}$ satisfies the following
homogeneity properties:
\begin{itemize}
    \item[(1)] If $q_{-}<p$ and $a=0$, then
    \begin{equation*}
        \mathbf{v}_{a+}(y)\equiv y^{\frac{p}{p-q_{-}}}\cdot\mathbf{d}\quad\mbox{for all }y\in(0,1];
    \end{equation*}
    \item[(2)] If  $q_{-}=p$ and $a=-\infty$, then
    \begin{equation*}
        \mathbf{v}_{a+}(y)\equiv\varphi(y)\cdot\mathbf{d}\quad\mbox{for all }y\in(-\infty,0];
    \end{equation*}
    \item[(3)] If $q_{-}>p$ and $a=-\infty$, then
    \begin{equation*}
        \mathbf{v}_{a+}(y)\equiv|y|^{\frac{p}{p-q_{-}}}\cdot\mathbf{d}\quad\mbox{for all }y\in(-\infty,-1].
    \end{equation*}
\end{itemize}
Moreover, we intend to show below that $\varphi(y)$ takes the form $\theta^{y}$.

To see this, as $W_{-}$ is homogeneous of degree $p$ in case (2), it follows from \eqref{eq. limiting equation for the blow-up/down limit} that
\begin{equation*}
    \Delta_{p,y}\varphi(y)\cdot\mathbf{d}=\varphi(y)^{p-1}\cdot D_{i}W_{-}(\mathbf{d}).
\end{equation*}
As a result, $$\Delta_{p,y}\varphi(y)\equiv c\cdot\varphi(y)^{p-1}$$ for some constant $c$. Multiplying this equation by $\varphi'(y)$  and  integrating gives that
\begin{equation}\label{eq. 1st order ODE of varphi}
    (\varphi'(y))^{p}-c_{1}\varphi(y)^{p}=c_{2}.
\end{equation}
Recall that it follows from Lemma~\ref{lem. v_T(y) asymptotic behavior} that $$\varphi'(-\infty)=\varphi(-\infty)=0,$$ then we see that $c_{2}=0$ in \eqref{eq. 1st order ODE of varphi} and that $$\varphi(y)=c\cdot\theta^{y}.$$ By replacing $\varphi(y)$ with $\theta^{y}$ and $\mathbf{d}$ with $c\cdot\mathbf{d}$, one has that $$\varphi(y)=\theta^{y}$$ for some $\theta>1$ in the second case ($q_{-}=p$).

\textbf{Step 3: Uniform comparability of the components of $\mathbf{d}$.} 

By taking $y=1$ in case (1) of the conclusion of \textbf{Step 2}, we have $$\mathbf{d}=\mathbf{v}_{a+}(1).$$ Moreover, by Lemma \ref{lem. v_T(y) asymptotic behavior}, there exists a constant $C>1$ such that $$C^{-1}\leq |\mathbf{v}_{T_k}(1)|\leq C.$$ Passing to the limit gives $$C^{-1}\leq|\mathbf{d}|\leq C,$$ that is, $$|\mathbf{d}|\sim1.$$ Similarly, one also has $|\mathbf{d}|\sim1$ in other two cases. Moreover, since $v^{i}(x)\geq0$ for all $i\in\{1,\cdots,m\}$, we conclude that
\begin{equation*}
    \mathbf{d}\in[0,+\infty)^{m}\quad\mbox{and }|\mathbf{d}|\sim1.
\end{equation*}

It suffices to show that $d^{i}>0$ for all $i\in\{1,\cdots,m\}$ and \eqref{eq. d's components are uniformly comparable} holds for every possible subsequence $T_{k}\to a+$. To see this, it follows from the homogeneity property of $\mathbf{v}_{a+}$ in \textbf{Step 2} and from \eqref{eq. limiting equation for the blow-up/down limit} (where $W_{-}$ is a $q_{-}$-degree homogeneous function) that
\begin{equation}\label{eq. algebraic equation}
    \Big(|d^{1}|^{p-2}d^{1},\cdots,|d^{m}|^{p-2}d^{m}\Big)\parallel DW_{-}(\mathbf{d}),\quad\mbox{i.e. }D_{i}W_{-}(\mathbf{d})=\lambda|d^{i}|^{p-2}d^{i}.
\end{equation}
We claim that if $\mathbf{d}\in[0,+\infty)^{m}\cap\partial B_{1}$ satisfies $d^{i}=0$ for some $i\in\{1,\cdots,m\}$, then $\mathbf{d}$ is not a solution to \eqref{eq. algebraic equation}.

In order to prove the claim, suppose that $\mathbf{d}\in[0,+\infty)^{m}\cap\partial B_{1}$ is a solution to \eqref{eq. algebraic equation} with $d^{1}=0$. Then, we must have $$D_{1}W_{-}(\mathbf{d})=0.$$ By the homogeneity of $W_{-}(\cdot)$, it holds that \[
D_1W_{-}(t\mathbf d)=0
\qquad\text{for all }t>0.
\] 
Differentiating with respect to $t$ gives
\begin{equation*}
    \sum_{i=1}^{m}D_{1i}W_{-}(\mathbf{d})\cdot d^{i}=0.
\end{equation*}
However, since $$d^{1}=0, \,\mathbf{d}\in[0,+\infty)^{m}\cap\partial B_{1},$$ it then follows from the assumption (A4) that indeed $$\displaystyle\sum_{i=1}^{m}D_{1i}W_{-}(\mathbf{d})\cdot d^{i}<0,$$ which is a contradiction. Hence, the claim is verified.

Moreover, it is easy to see that all solutions to the problem \eqref{eq. algebraic equation}, after being projected to $\mathbb{S}^{m-1}=\partial B_{1}$, form a closed set. Therefore, there exists a uniform constant $C$, such that all solutions $\mathbf{d}\in[0,+\infty)^{m}\cap\partial B_{1}$ to \eqref{eq. algebraic equation} must satisfy \eqref{eq. d's components are uniformly comparable}. This completes the proof of Lemma \ref{lem. homogeneity of the limit}.
\end{proof}
\begin{proof}[Proof of Corollary~\ref{cor3.1}]
We only prove the assertion as $x\to a+$. The proof for $x\to b-$ is analogous. Suppose that the conclusions in Corollary~\ref{cor3.1} fails, then there exists a sequence $x_{k}\to a+$ and a fixed pair $i\neq j\in\{1,\cdots,m\}$, such that
\begin{equation}\label{eq. counter example either or}
    \mbox{either }\liminf_{x\to a+}\Big(\frac{|u_{x}^{i}|^{p-2}u_{x}^{i}}{D_{i}W(\mathbf{u})}:\frac{|u_{x}^{j}|^{p-2}u_{x}^{j}}{D_{j}W(\mathbf{u})}\Big)>1,\quad\mbox{or }\lim_{x\to a+}\frac{u^{i}_{x}}{u^{j}_{x}}=+\infty.
\end{equation}

For such a subsequence $x_{k}$, we define $$\mathbf{v}=\mathbf{u}+\mathbf{e}$$ and set $$\mathbf{v}_{k}(y)=\mathbf{v}_{T_{k}}(y)\,\,\mbox{for}\,\, T_{k}=x_{k},$$ as in Definition~\ref{def. blow-up/down process, v_T construction}. It follows from Lemma~\ref{Compactness} and Lemma~\ref{lem. homogeneity of the limit} that there exists a subsequence of $\mathbf{v}_{k}(y)$ (still labeled as $\mathbf{v}_{k}$) converges to a homogeneous solution $\mathbf{v}_{a+}$ (which possibly depends on the choice of $x_{k}$) to \eqref{eq. limiting equation for the blow-up/down limit} in the $C^{1,\epsilon}(I)$ sense. Consequently, we have
\begin{equation}\label{eq. converge of counter example sequence}
    \lim_{k\to\infty}\nabla_{y}\mathbf{v}_{k}(1)=\nabla_{y}\mathbf{v}_{a+}(1),\quad\mbox{and }\lim_{k\to\infty}\mathbf{v}_{k}(1)=\mathbf{v}_{a+}(1).
\end{equation}
Notice that it follows from Lemma~\ref{lem. homogeneity of the limit} and \eqref{eq. algebraic equation} that for every $i,j\in\{1,\cdots,m\}$,
\begin{equation*}
    \frac{|\nabla_{y}v_{a+}^{i}(1)|^{p-2}\nabla_{y}v_{a+}^{i}(1)}{D_iW_{-}\big(\mathbf{v}_{a+}(1)\big)}=\frac{|\nabla_{y}v_{a+}^{j}(1)|^{p-2}\nabla_{y}v_{a+}^{j}(1)}{D_jW_{-}\big(\mathbf{v}_{a+}(1)\big)}\quad\mbox{and }\nabla_{y}v^{i}_{a+}(1)\leq C\nabla_{y}v^{j}_{a+}(1).
\end{equation*}
This, together with \eqref{eq. converge of counter example sequence}, implies that for every $i,j\in\{1,\cdots,m\}$, one has
\begin{equation*}
    \lim_{k\to\infty}\Big(\frac{|\nabla_{y}v_{k}^{i}(1)|^{p-2}\nabla_{y}v_{k}^{i}(1)}{D_iW_{-}\big(\mathbf{v}_{k}(1)\big)}:\frac{|\nabla_{y}v_{k}^{j}(1)|^{p-2}\nabla_{y}v_{k}^{j}(1)}{D_jW_{-}\big(\mathbf{v}_{k}(1)\big)}\Big)=1\mbox{ and }\nabla_{y}v^{i}_{k}(1)\leq C\nabla_{y}v^{j}_{k}(1).
\end{equation*}

By the definition of $\mathbf{v}$ and $\mathbf{v}_{k}$, we see that the property above contradicts the assumption \eqref{eq. counter example either or}. Hence, the proof of Corollary~\ref{cor3.1} is complete.
\end{proof}

\section{Monotonicity of the heteroclinical solution}

In this section, we establish the monotonicity of each component of the heteroclinical solution.
We first collect several basic properties of the underlying cooperative system, which will play an essential role in the proof of monotonicity.

\subsection{Basic properties for the cooperative system}
Since we have assumed that $W_{ij}<0$ for $i\neq j$ (except at the potential wells $\pm\mathbf{e}$), we have the following two important observations, which will be essential in proving the monotonicity of the solution.

\begin{lemma}\label{lem. boundary stability of pm e}
    Let $\mathbf{u}(x)$ satisfy \eqref{eq. GP equation} in the classical sense near $x=0$. If for some $i\in\{1,\cdots,m\}$ we have $$u^{i}(0)=-\Lambda_{i}\,\, (\mbox{or} \,\Lambda_{i}),$$ then $$\mathbf{u}(0)=-\mathbf{e}\, (\mbox{or }\,\mathbf{e},\, \mbox{ respectively}).$$
\end{lemma}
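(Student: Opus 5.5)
The plan is to argue by contradiction through a one-dimensional flux argument for the $i$-th equation, and to reduce everything to a sign condition on $D_{i}W$ along the face $\{u^{i}=-\Lambda_{i}\}$. That sign condition is where the cooperativity assumption $D_{ij}W<0$ from (A1) enters. I only treat $u^{i}(0)=-\Lambda_{i}$; the case $u^{i}(0)=\Lambda_{i}$ follows by the symmetry $\mathbf{u}\mapsto-\mathbf{u}$, with $W_{+}$ in place of $W_{-}$.

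\textbf{Step 1 (the flux argument).} Suppose $u^{i}(0)=-\Lambda_{i}$ but $\mathbf{u}(0)\neq-\mathbf{e}$. Since $\mathbf{u}$ takes values in the box, $u^{i}$ attains its minimum at $x=0$, so $u^{i}_{x}(0)=0$. Integrating \eqref{eq. GP equation} from $0$ to $x$ gives
\begin{equation*}
    |u^{i}_{x}(x)|^{p-2}u^{i}_{x}(x)=\int_{0}^{x}W_{i}(\mathbf{u}(y))\,dy .
\end{equation*}
Suppose we know $W_{i}(\mathbf{u}(0))<0$. Then by continuity $W_{i}(\mathbf{u}(y))<0$ for small $|y|$, since $\mathbf{u}(0)\neq\pm\mathbf{e}$ and $W$ is $C^{2}$ there. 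Hence $u^{i}_{x}<0$ for small $x>0$, which forces $u^{i}(x)<-\Lambda_{i}$ and contradicts the range constraint. This part is routine. Everything therefore reduces to the algebraic claim: if $\mathbf{w}$ lies in the box with $w^{i}=-\Lambda_{i}$ and $\mathbf{w}\neq-\mathbf{e}$, then $D_{i}W(\mathbf{w})<0$.

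\textbf{Step 2 (cooperativity along the face).} Set $\mathbf{z}=\mathbf{w}+\mathbf{e}$. Then $\mathbf{z}\in[0,\infty)^{m}$, $z^{i}=0$ and $\mathbf{z}\neq0$. Consider the segment $\sigma(s)=-\mathbf{e}+s\mathbf{z}$ for $s\in(0,1]$. It stays in the face $\{u^{i}=-\Lambda_{i}\}$ and avoids $\pm\mathbf{e}$, so by (A1)
\begin{equation*}
    \frac{d}{ds}D_{i}W(\sigma(s))=\sum_{j\neq i}D_{ij}W(\sigma(s))\,z^{j}<0 .
\end{equation*}
Thus $s\mapsto D_{i}W(\sigma(s))$ is strictly decreasing on $(0,1]$. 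It then suffices to show $\limsup_{s\to0+}D_{i}W(\sigma(s))\le0$.

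\textbf{Step 3 (near the well, the expected main obstacle).} For small $s$, (A3) and (A5) give
\begin{equation*}
    D_{i}W(\sigma(s))=s^{q_{-}-1}D_{i}W_{-}(\mathbf{z})\big(1+O(s)\big)+O(s^{q_{-}}) .
\end{equation*}
So the sign is governed by $D_{i}W_{-}(\mathbf{z})$. Differentiating the homogeneity relation $D_{i}W_{-}(t\mathbf{z})=t^{q_{-}-1}D_{i}W_{-}(\mathbf{z})$ at $t=1$, and using $z^{i}=0$ together with (A4), gives
\begin{equation*}
    (q_{-}-1)\,D_{i}W_{-}(\mathbf{z})=\sum_{j\neq i}D_{ij}W_{-}(\mathbf{z})z^{j}<0 .
\end{equation*}
When $q_{-}>1$ this yields $D_{i}W_{-}(\mathbf{z})<0$, hence $D_{i}W(\sigma(s))<0$ for small $s$. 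By Step 2 we then get $D_{i}W(\mathbf{w})<0$, which finishes the proof.

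The delicate regime is $q_{-}\le1$, where this identity gives no sign, or the wrong sign. Here I would first try to use the heteroclinical structure rather than the pure algebra. Specifically, I would use the behaviour of $\mathbf{u}$ near $0$, the energy-minimizing property from Step 5 of Lemma~\ref{lem. existence}, and a competitor that pushes $u^{i}$ slightly inside the box, to rule out a positive $D_{i}W$ on the face. If that fails, I would restrict to the range $q_{-}>1$; this range is implicitly needed for $W\in C^{1}$ up to $\pm\mathbf{e}$ in the first place.
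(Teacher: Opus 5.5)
Your argument is correct for $q_{\pm}>1$ and is essentially the paper's proof. The minimum of $u^{i}$ at $0$ forces $W_{i}(\mathbf{u}(0))\geq0$, while integrating $D_{ij}W<0$ along the segment from $-\mathbf{e}$ to $\mathbf{u}(0)$ inside the face $\{u^{i}=-\Lambda_{i}\}$ forces $W_{i}(\mathbf{u}(0))<0$. The paper anchors this integration at $W_{i}(-\mathbf{e})=0$, which is all your Step 3 actually needs: $D_{i}W(\sigma(s))=O(s^{q_{-}-1})\to0$ already gives your $\limsup$ condition without the Euler identity.

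Restricting to $q_{\pm}>1$ is exactly the paper's tacit assumption, since it is what makes $W_{i}(-\mathbf{e})=0$ true. Drop the energy-competitor idea: the lemma is purely local, $q_{-}=1$ is incompatible with (A4) by your own Euler identity, and for $q_{-}<1$ that identity gives $D_{i}W_{-}(\mathbf{z})>0$ on the face, so the statement genuinely fails there. For example, take $p=2$ and $W=F\big(\sum_{j}(u^{j}+\Lambda_{j})\big)$ with $F(S)=\big(S(2L-S)\big)^{q}$, $L=\sum_{j}\Lambda_{j}$, $0<q<1$. Then a local solution can have a strict minimum $u^{1}(0)=-\Lambda_{1}$ with $\mathbf{u}(0)\neq-\mathbf{e}$.
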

\begin{proof}
    Without loss of generality, we assume that
\[
u^1(0)=-\Lambda_1,
\]
but
\[
u^2(0)>-\Lambda_2.
\] 
We shall derive a contradiction. Since $u^{1}(x)\geq-\Lambda_{1}=u^{1}(0)$, we must have $$W_{1}(\mathbf{u}(0))=\Delta_{p}u^{1}(0)\geq0.$$
By (A3)-(A5), we know that $W_{1}(-\mathbf{e})=0$. By integration,
    \begin{align*}
        W_{1}(\mathbf{u}(0))=&\int_{0}^{1}\frac{d}{dt}W_{1}\Big(-\mathbf{e}+(\mathbf{u}(0)+\mathbf{e})\cdot t\Big)dt\\
        =&\sum_{i=1}^{m}\int_{0}^{1}W_{1i}\Big(-\mathbf{e}+(\mathbf{u}(0)+\mathbf{e})\cdot t\Big)\cdot(u^{i}(0)+\Lambda_{i})dt.
    \end{align*}
    Recall that $W_{ij}<0$ for $i\neq j$ except possibly at $\pm\mathbf{e}$, and notice that
    \begin{equation*}
        u^{i}(0)+\Lambda_{i}=0,\quad\mbox{with }u^{1}(0)+\Lambda_{1}=0\mbox{ and }u^{2}(0)+\Lambda_{2}>0.
    \end{equation*}
    Then, we have
    \begin{equation*}
        W_{1}(\mathbf{u}(0))\leq\int_{0}^{1}W_{12}\Big(-\mathbf{e}+(\mathbf{u}(0)+\mathbf{e})\cdot t\Big)\cdot(u^{2}(0)+\Lambda_{2})dt<0.
    \end{equation*}
    This contradicts the previously obtained inequality
\[
W_1(\mathbf u(0))=\Delta_pu^1(0)\geq0.
\]
Hence, the proof is complete.
\end{proof}

\begin{lemma}\label{le5.3}
    Let $\mathbf{u}(x)$ be a $C^{1,\epsilon}_{loc}$ solution to \eqref{eq. GP equation} satisfying \eqref{eq. speed identity} near $x=0$. Assume that $\mathbf{u}(0)\neq\pm\mathbf{e}$ and that $\mathbf{u}$ is pointwisely increasing at $x=0$, i.e.:
    \begin{equation*}
        u^{i}(-x)\leq u^{i}(0)\leq u^{i}(x),\quad\mbox{for all }i\in\{1,\cdots,m\}\mbox{ and sufficiently small }x\geq0.
    \end{equation*}
    Then $u^{i}_{x}(0)>0$ for all $i\in\{1,\cdots,m\}$.
\end{lemma}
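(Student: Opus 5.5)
The plan is to argue by contradiction, using the cooperative structure $D_{ij}W<0$ for $i\neq j$ from (A1) to push a component with vanishing derivative strictly downward. First, the pointwise monotonicity at $x=0$ and the $C^{1}$ regularity of $\mathbf{u}$ give $u^{i}_{x}(0)\geq0$ for every $i$. Define
\begin{equation*}
I_{0}=\{i:\ u^{i}_{x}(0)=0\},\qquad J=\{1,\cdots,m\}\setminus I_{0},
\end{equation*}
and suppose that $I_{0}\neq\emptyset$. Since $\mathbf{u}(0)\neq\pm\mathbf{e}$, (A2) gives $W(\mathbf{u}(0))>0$. The speed identity \eqref{eq. speed identity} then yields $|\mathbf{u}_{x}(0)|_{l_{p}}>0$, so $J\neq\emptyset$. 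Moreover, by (A1), $W$ is $C^{2}$ in a neighborhood of $\mathbf{u}(0)$, and $D_{ik}W<0$ there for $i\neq k$.

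Fix $i\in I_{0}$ and set $\varphi_{i}=|u^{i}_{x}|^{p-2}u^{i}_{x}$. The equation in divergence form reads $\varphi_{i}'=W_{i}(\mathbf{u})$, with $\varphi_{i}(0)=0$, and the right-hand side is continuous. I first show that $W_{i}(\mathbf{u}(0))=0$. If $W_{i}(\mathbf{u}(0))<0$, then $\varphi_{i}<0$ on $(0,\delta)$, so $u^{i}$ is strictly decreasing there and $u^{i}(x)<u^{i}(0)$ for small $x>0$, contradicting the hypothesis. If $W_{i}(\mathbf{u}(0))>0$, then $\varphi_{i}<0$ on $(-\delta,0)$, so $u^{i}$ is strictly decreasing on $(-\delta,0)$ and $u^{i}(-x)>u^{i}(0)$, again a contradiction.

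With $W_{i}(\mathbf{u}(0))=0$ in hand, I expand for small $t>0$:
\begin{equation*}
W_{i}(\mathbf{u}(t))=\sum_{k=1}^{m}\Big(\int_{0}^{1}D_{ik}W\big(\mathbf{u}(0)+s(\mathbf{u}(t)-\mathbf{u}(0))\big)ds\Big)\,(u^{k}(t)-u^{k}(0)).
\end{equation*}
I split the sum by the index $k$:
\begin{itemize}
\item[(a)] For $k\in J$ we have $k\neq i$ and $u^{k}(t)-u^{k}(0)\geq\frac12 u^{k}_{x}(0)\,t>0$. Together with $D_{ik}W\leq-c<0$ near $\mathbf{u}(0)$, each such term is at most $-c't$.
\item[(b)] For $k\in I_{0}$, including $k=i$, we have $u^{k}(t)-u^{k}(0)=o(t)$ because $u^{k}\in C^{1}$ with $u^{k}_{x}(0)=0$. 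Since $D^{2}W$ is bounded near $\mathbf{u}(0)$, these terms contribute $o(t)$ in total.
\end{itemize}
Hence $W_{i}(\mathbf{u}(t))\leq-c''t<0$ for small $t>0$. Then $\varphi_{i}(x)=\int_{0}^{x}W_{i}(\mathbf{u}(t))\,dt<0$, so $u^{i}_{x}<0$ on $(0,\delta)$. This gives $u^{i}(x)<u^{i}(0)$ for small $x>0$, contradicting the monotonicity hypothesis. Therefore $I_{0}=\emptyset$.

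The main obstacle is the degeneracy of the $p$-Laplacian at $u^{i}_{x}=0$. A linearized strong maximum principle is not available there, so the argument must be carried out on the flux $\varphi_{i}$ alone, which is $C^{1}$ with $\varphi_{i}'=W_{i}(\mathbf{u})$. The delicate point is the diagonal term $D_{ii}W$, which has no sign. It is handled only because $J\neq\emptyset$, which is exactly where the speed identity enters: the off-diagonal contributions are linear in $t$ and negative, while the diagonal term is merely $o(t)$.
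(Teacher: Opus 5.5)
Your proof is correct, and it takes a genuinely different route from the paper's in how it handles the unknown value $W_i(\mathbf{u}(0))$.

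The paper never determines this value. Assuming $u^1_x(0)\geq c_0>0$ and $u^2_x(0)=0$, it works with the symmetric difference $W_2(\mathbf{u}(x))-W_2(\mathbf{u}(-x))$, in which $W_2(\mathbf{u}(0))$ cancels. It applies the mean value theorem at a point $\xi$ on the segment $[\mathbf{u}(-x),\mathbf{u}(x)]$. The diagonal and remaining off-diagonal terms are then absorbed quantitatively: it uses a ratio constant $M$ with $|D_{2j}W|\leq -M\,D_{21}W$ and the smallness choice $c_1=c_0/(2mM)$ for the nearby derivatives. This gives $W_2(\mathbf{u}(x))-W_2(\mathbf{u}(-x))\leq -c_2x$. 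Integrating over $[0,x_0]$ yields $\varphi_2(x_0)+\varphi_2(-x_0)<0$, hence $u^2_x(x_0)+u^2_x(-x_0)<0$. A second integration gives $u^2(x)<u^2(-x)$, which contradicts monotonicity.

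You instead spend the two-sided monotonicity up front. The zeroth-order sign argument on the flux forces $W_i(\mathbf{u}(0))=0$. After that you only need a one-sided first-order expansion, in which the diagonal and $I_0$ contributions are $o(t)$ against the linear negative drift coming from $J$.

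What your route buys:
\begin{itemize}
\item It uses only continuity of $D^2W$ at $\mathbf{u}(0)$. There is no uniform ratio bound between mixed derivatives; the paper states that bound over the whole box, although only a local version is needed.
\item It treats all vanishing components at once.
\item It gives the extra information $W_i(\mathbf{u}(0))=0$.
\end{itemize}
The paper's symmetrization buys a single computation with no case split on the sign of $W_2(\mathbf{u}(0))$.

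Both arguments use the speed identity in the same way, to produce a component with strictly positive derivative. Both use the cooperative sign condition (A1) as the source of the contradiction.
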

\begin{proof}
By the monotonicity assumption,
\[
u_x^i(0)\geq0,
\qquad\mbox{for all }i\in\{1,\cdots,m\}.
\]
    By \eqref{eq. speed identity}, there exists at least one $i\in\{1,\cdots,m\}$, such that
    \begin{equation*}
        u^{i}_{x}(0)\geq c_{0}:=\Big(\frac{p}{(p-1)\cdot m}\cdot W\big(\mathbf{u}(0)\big)\Big)^{1/p}.
    \end{equation*}
Without loss of generality, assume that
\[
u_x^1(0)\geq c_0 .
\]
It suffices to prove that
\[
u_x^i(0)>0,\qquad i\geq2 .
\]

    Suppose that we instead have $u^{2}_{x}(0)=0$ (i.e., we consider $i=2$). Fix a $c_{1}>0$ (to be specified later), such that $c_{1}/c_{0}$ is sufficiently small. By the $C^{\epsilon}$ continuity of $\mathbf{u}_{x}$, for all sufficiently small $x_{0}>0$ and $x\in[-x_{0},x_{0}]$, one has
    \begin{equation}\label{eq. severe derivative difference}
        u^{1}_{x}(x)\geq\frac{c_{0}}{2},\quad|u^{2}_{x}(x)|\leq c_{1},\quad\mbox{and }u^{i}_{x}\geq-c_{1}\quad\mbox{for all }i\in\{1,\cdots,m\}.
    \end{equation}
    Let $M$ be a uniform constant depending on the potential function $W(\cdot)$, such that
    \begin{equation}\label{eq. very negative mixed derivative}
        |D_{2j}W(\xi)|\leq-M\cdot D_{21}W(\xi),\quad\mbox{for all }\xi\in[-\Lambda_{1},\Lambda_{1}]\times\cdots\times[-\Lambda_{m},\Lambda_{m}]\mbox{ and }j\neq1.
    \end{equation}
    The existence of $M$ is guaranteed by the assumptions (A1)-(A5).
    
    Let $\mathbf{v}(x)=\mathbf{u}(x)-\mathbf{u}(-x)$, then $\mathbf{v}(x)$ (with $x\in[0,x_{0}]$) satisfies
    \begin{equation*}
        \Delta_{p}v^{2}(x)=D_{2}W(\mathbf{u}(x))-D_{2}W(\mathbf{u}(-x)).
    \end{equation*}
    Using the Lagrange mean value theorem, there exists a point $\xi=(\xi^{1},\cdots,\xi^{m})$ lying on the line segment $[\mathbf{u}(-x),\mathbf{u}(x)]$, such that
    \begin{align*}
        \Delta_{p}v^{2}(x)=&\sum_{j=1}^{m}D_{2j}W(\xi)\cdot(u^{j}(x)-u^{j}(-x))\\
        \leq&-|D_{21}W(\xi)|\cdot c_{0}x+|D_{22}W(\xi)|\cdot2c_{1}x-\sum_{j\geq2}D_{2j}W(\xi)\cdot2c_{1}x.
    \end{align*}
    where we have used \eqref{eq. severe derivative difference} and that $D_{2j}W<0$ for $j\neq2$. By \eqref{eq. very negative mixed derivative}, one has
    \begin{equation*}
        \Delta_{p}v^{2}(x)\leq-|D_{21}W(\xi)|\cdot c_{0}x+m\cdot M|D_{21}W(\xi)|\cdot c_{1}x\leq-\frac{1}{2}|D_{21}W(\xi)|\cdot c_{0}x,
    \end{equation*}
    provided that $\displaystyle c_{1}:=\frac{c_{0}}{2m\cdot M}$. By denoting $c_{2}:=\frac{1}{2}|D_{21}W(\xi)|\cdot c_{0}$, we have
    \begin{equation*}
        \Delta_{p}v^{2}(x)\leq-c_{2}x,\quad\mbox{for }x\in[0,x_{0}].
    \end{equation*}
    Integrating such an inequality over $[0,x_{0}]$ yields:
    \begin{equation*}
        |u^{2}_{x}(x_{0})|^{p-2}u^{2}_{x}(x_{0})+|u^{2}_{x}(-x_{0})|^{p-2}u^{2}_{x}(-x_{0})-2|u^{2}_{x}(0)|^{p-2}u^{2}_{x}(0)\leq-\frac{c_{2}}{2}x_{0}^{2}.
    \end{equation*}
    
    Now we can derive a contradiction. If $u^{2}_{x}(0)=0$, then
    \begin{equation*}
        |u^{2}_{x}(x_{0})|^{p-2}u^{2}_{x}(x_{0})+|u^{2}_{x}(-x_{0})|^{p-2}u^{2}_{x}(-x_{0})<-\frac{c_{2}}{2}x_{0}^{2},
    \end{equation*}
    which further implies that
    \begin{equation*}
        u^{2}_{x}(x_{0})+u^{2}_{x}(-x_{0})<0\mbox{ for }x_{0}\mbox{ sufficiently small}.
    \end{equation*}
    However, integrating such an inequality (by de-freezing $x_{0}$) implies $$u^{2}(x)<u^{2}(-x)$$ for all sufficiently small $x>0$, which contradicts the assumption that $\mathbf{u}$ is increasing at $x=0$.
\end{proof}
\subsection{On the sliding method}
We now prove the non-strict monotonicity of the heteroclinical solution $\mathbf{u}$ using the sliding method, which is effective for studying equations with phase-transition background (see \cite{BN1991} for $p=2$ and $m=1$).

\begin{lemma}[Monotonicity]\label{le5.4}
Let $\mathbf{u}(x)$ be a heteroclinical solution of \eqref{eq. GP equation} with $\mathbf{u}(\pm\infty)=\pm\mathbf{e}$. Then $u^{i}(x)\geq 0$ for $i\in\{1,\cdots,m\}$.
\end{lemma}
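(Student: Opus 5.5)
We read the conclusion of Lemma~\ref{le5.4} as $u^{i}_{x}(x)\geq0$ for all $x$ and all $i$; the statement as written presumably has a typo. We prove it by the sliding method. For $\tau\geq0$ set $\mathbf{u}^{\tau}(x)=\mathbf{u}(x+\tau)$. We show that $\mathbf{u}^{\tau}\geq\mathbf{u}$ componentwise for every $\tau>0$, which gives nondecreasing components.

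\textbf{Step 1: large shifts.} By Lemma~\ref{lem. boundary stability of pm e} and the connectedness in Definition~\ref{def. heteroclinical}, $\mathbf{u}\equiv-\mathbf{e}$ on $(-\infty,a]$ and $\mathbf{u}\equiv\mathbf{e}$ on $[b,\infty)$, where $a$ or $b$ may be infinite. By Corollary~\ref{cor3.1}, the blow-up profiles have every component strictly increasing. Hence there is $R$ such that every $u^{i}$ is strictly increasing on $(a,a+R^{-1})$ and on $(b-R^{-1},b)$; when $a=-\infty$ or $b=\infty$, these intervals are $(-\infty,-R)$ and $(R,\infty)$. Concretely, $u^{i}_{x}\sim$ the derivative of the profile, using the $C^{1,\epsilon}$ convergence in Lemma~\ref{Compactness} and the positivity of $\mathbf{d}$. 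Moreover $\mathbf{u}$ is close to $\mp\mathbf{e}$ there. For $\tau$ very large, $\mathbf{u}^{\tau}(x)$ is near $\mathbf{e}$ wherever $\mathbf{u}(x)$ is not near $-\mathbf{e}$. Combining this with the far-field monotonicity gives $\mathbf{u}^{\tau}\geq\mathbf{u}$ everywhere. The finite free boundary case is easy, since $\mathbf{u}^{\tau}\geq-\mathbf{e}$ trivially.

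\textbf{Step 2: decrease $\tau$.} Let $\tau_{*}=\inf\{\tau>0:\mathbf{u}^{s}\geq\mathbf{u}\ \forall s\geq\tau\}$, and suppose $\tau_{*}>0$. By continuity, $\mathbf{u}^{\tau_{*}}\geq\mathbf{u}$. Minimality yields sequences $\tau_k\uparrow\tau_*$ and points $x_k$ with $u^{i}(x_k+\tau_k)<u^{i}(x_k)$ for some fixed $i$. The far-field monotonicity of Step~1 keeps $x_k$ in a compact set away from the ends. Indeed, in the far-field intervals both points lie in the strictly monotone region, or one of them is in the constant region, where the ordering is automatic. Passing to the limit gives a touching point $x_0$ with $\mathbf{u}(x_0)\neq\pm\mathbf{e}$, $u^{i}(x_0+\tau_*)=u^{i}(x_0)$, and $u^{i}_{x}(x_0+\tau_*)=u^{i}_{x}(x_0)$.

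\textbf{Step 3: strong comparison at the touching point.} This is the main obstacle, because $\Delta_p$ degenerates where $u^{i}_x=0$. Set $w=u^{i,\tau_*}-u^{i}\geq0$. We have
\[
\Delta_p u^{i,\tau_*}-\Delta_p u^{i}=D_iW(\mathbf{u}^{\tau_*})-D_iW(\mathbf{u})=\sum_j c_{ij}(x)\,(u^{j,\tau_*}-u^{j}),
\]
with $c_{ij}\leq0$ for $j\neq i$ by (A1). The off-diagonal terms are $\leq0$, so the right side is $\leq c_{ii}w$.

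Case~(a): $u^{i}_x(x_0)\neq0$. The equation for $w$ is uniformly elliptic near $x_0$, and the linear strong maximum principle in one dimension gives $w\equiv0$ near $x_0$. Continuation then gives $w\equiv0$, i.e.\ $u^{i}$ is $\tau_*$-periodic, contradicting the limits $\pm\Lambda_i$ at $\pm\infty$.

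Case~(b): $u^{i}_x(x_0)=0$. Here we use the speed identity \eqref{eq. speed identity}. Since $W(\mathbf{u}(x_0))>0$, some $u^{j}_x(x_0)$ is bounded away from zero, and we rerun the mechanism of Lemma~\ref{le5.3}. If all components touch at $x_0$, the ODE system has the same data at $x_0$ and at $x_0+\tau_*$. Uniqueness holds for the regular components, and we treat the degenerate components via the first-order speed relation; this forces $\mathbf{u}^{\tau_*}\equiv\mathbf{u}$, a contradiction as before. If some $w^{j}$ with $j\neq i$ is positive near $x_0$, the strictly negative cross term $c_{ij}w^{j}$ makes $\Delta_p u^{i,\tau_*}<\Delta_p u^{i}$ near $x_0$. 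Integrating as in Lemma~\ref{le5.3}, starting from equal gradients at $x_0$, then gives $w<0$ on one side of $x_0$, contradicting $w\geq0$.

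Therefore $\tau_*=0$, so $\mathbf{u}^{\tau}\geq\mathbf{u}$ for all $\tau>0$, and every component is nondecreasing. Strict positivity of $u^{i}_x$ on $(a,b)$ then follows from Lemma~\ref{le5.3}.
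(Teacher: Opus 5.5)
Your Steps 1--2 follow the paper. Your sub-case of Step~3(b) in which some $w^{j}$, $j\neq i$, is positive at $x_0$ is exactly the paper's argument that all components must touch at once. It uses only the strict monotonicity of $A(s)=|s|^{p-2}s$, so it is valid even where $u^{i}_{x}(x_0)=0$.

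The gap is the other sub-case, where all components touch at $x_0$ and $u^{i}_{x}(x_0)=0$. There you invoke uniqueness for the Cauchy problem, with the degenerate components ``treated via the first-order speed relation''. This does not work as stated:
\begin{itemize}
\item Write the system as $u^{j}_{x}=A^{-1}(z^{j})$, $z^{j}_{x}=W_{j}(\mathbf u)$. The map $A^{-1}(z)=|z|^{\frac{2-p}{p-1}}z$ is not Lipschitz at $z=0$ when $p>2$. So equal data at $x_0$ and $x_0+\tau_*$ do not by themselves force $\mathbf u^{\tau_*}\equiv\mathbf u$.
\item ``Uniqueness for the regular components'' has no meaning in isolation, since their equations involve every component through $W_{j}(\mathbf u)$.
\item \eqref{eq. speed identity} is a single scalar relation. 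It cannot pin down the individual degenerate components.
\end{itemize}
For $1<p\le2$ the system is locally Lipschitz and your claim holds, but the lemma is stated for all $p>1$. Your Case~(a) does not escape this either. Once $w^{i}\equiv0$ on an interval, the $i$-th equation and $c_{ik}<0$ give $\mathbf w\equiv0$ there. The continuation can then only stall at a point where $u^{i}_{x}$ vanishes. Such a point is again a full touching point with a degenerate component, i.e.\ the unproved sub-case.

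The paper avoids degenerate touching points altogether. After showing that all components touch, it shows that $\mathbf u$ is pointwise nondecreasing at the contact point, using $\mathbf u^{\tau}\geq\mathbf u$ for $\tau>\tau_0$. Lemma~\ref{le5.3} then gives $u^{j}_{x}(x_0)>0$ for \emph{every} $j$. Hence the linearized system is uniformly elliptic and strictly cooperative near $x_0$, and this is re-checked at each contact point in the open--closed continuation.

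A repair closer to your own setup also works:
\begin{itemize}
\item At a full touching point, pick $j$ with $u^{j}_{x}(x_0)\neq0$; it exists by \eqref{eq. speed identity}.
\item Apply your Case~(a) linear strong maximum principle to $w^{j}$ to get $w^{j}\equiv0$ near $x_0$.
\item Then $0=\sum_{k\neq j}c_{jk}w^{k}$ with $c_{jk}<0$ and $w^{k}\geq0$ forces $\mathbf u^{\tau_*}\equiv\mathbf u$ near $x_0$.
\item Repeat at the endpoints of the maximal coincidence interval. These cannot reach the wells, by Lemma~\ref{lem. boundary stability of pm e} and the limits at $\pm\infty$, so you get global coincidence and the contradiction.
\end{itemize}
In both routes the missing idea is to apply the strong maximum principle to a nondegenerate component and transfer the conclusion through strict cooperativity, rather than to rely on ODE uniqueness.

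A smaller point: in Step~1, the componentwise comparison between the far field and the middle region needs every $u^{i}$ to stay a fixed distance $d>0$ from $\pm\Lambda_{i}$ on the compact middle piece. That is where Lemma~\ref{lem. boundary stability of pm e} is really used; compare \eqref{M0L}.
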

\begin{proof}
By Definition~\ref{def. heteroclinical}, there exist $a,b\in[-\infty,+\infty]$, such that
\begin{equation*}
    (a,b)=\{x\in\mathbb{R}:\mathbf{u}(x)\neq\pm\mathbf{e}\}.
\end{equation*}
By Corollary \ref{cor3.1}, we deduce that $u^i$ is strictly monotonic near $a+$ and $b-$ for any $1\leq i\leq m$. Therefore, there exist $a'$ and $b'$ such that
\begin{itemize}
    \item[(i)] $|a'|, |b'|<+\infty$, and $a<a'<b'<b$;
    \item[(ii)] $\mathbf{u}(x)$ is sufficiently close to $-\mathbf{e}$ (resp. $+\mathbf{e}$) when $x\in(a,a']$ (resp. $x\in[b',b)$);
    \item[(iii)] $u^i(x)$'s are strictly increasing on the intervals $(a, a')$ and $(b', b)$.
\end{itemize}
Since $\mathbf{u}(x)\neq\pm\mathbf{e}$ in $[a',b']$, it then follows from Lemma \ref{lem. boundary stability of pm e} that
\begin{equation*}
    u^i(x)\neq\pm \Lambda_i,\quad\mbox{for any }x\in[a',b']\mbox{ and }i\in\{1,\cdots,m\}.
\end{equation*}
Then, let us denote 
\begin{equation*}
    d=\inf_{\substack{x\in[a',b']\\i\in\{1,\cdots,m\}}}\Big(\min\big\{\Lambda_{i}-u^{i}(x), u^{i}(x)+\Lambda_{i}\big\}\Big)>0.
\end{equation*}
Since $\mathbf{u}(\pm\infty)=\pm\mathbf{e}$, there exist $a<a''\leq a'$ and $b'\leq b''<b$, such that
\begin{equation*}
    \begin{cases}
\Lambda_i-u^i(x)<d, \,\, &\mbox{if}\,\, x> b'',\\
u^i(x)+\Lambda_i<d, \,\, &\mbox{if}\,\, x<a'',
\end{cases}\quad\mbox{for all }i\in\{1,\cdots,m\}.
\end{equation*}
Moreover, recalling that $\mathbf{u}(x)\equiv-\mathbf{e}$ when $x\leq a$ and $\mathbf{u}(x)\equiv\mathbf{e}$ when $x\geq b$, we then have that $u^{i}(x)$'s are (non-strictly) increasing on the intervals $(-\infty, a'']$ and $[b'', +\infty)$. In conclusion, for all $x_{1}\leq x_{2}\leq a''\leq y\leq b''\leq z_{1}\leq z_{2}$, one has
\begin{equation}\label{M0L}
    u^{i}(x_{1})\leq u^{i}(x_{2})\leq u^{i}(y)\leq u^{i}(z_{1})\leq u^{i}(z_{2}),\quad\mbox{for all }i\in\{1,\cdots,m\}.
\end{equation}

In order to show the monotonicity of $u^{i}$,  we introduce the sliding argument.
For every $\tau\geq0$,  we define
\begin{equation*}
    x^\tau=x+\tau,\quad\mathbf{u}_{\tau}(x)=\mathbf{u}(x^{\tau}),\quad\mathbf{w}_\tau (x)=\mathbf{u}_\tau (x)-\mathbf{u}(x).
\end{equation*}
Our goal is to show that 
$$w_{\tau}^{i}(x)\geq0\,\, \mbox{for all}\,\, x\in\mathbb{R}, \,\tau\geq0, \mbox{and}\,\, i\in\{1,\cdots,m\}.$$
    
\textbf{Step 1.} Let $$M_{0}=|a''|+|b''|.$$ We first show that
\begin{equation}\label{step1-1}
w^{i}_{\tau}(x) \geq 0  \,\, \mbox{in}\,\, \mathbb{R},\,\, \mbox{for}\,\, \tau >2M_0\,\, \mbox{and each}\,\,i\in\{1,\cdots,m\},
\end{equation}
where $w^{i}_\tau(x)$ is the $i$-th component of $w_{\tau}(x)$. Notice that for $\tau>2M_{0}$, one has that the interval $[a''-\tau,b''-\tau]$ lies totally to the left of the interval $[a'',b'']$. To show \eqref{step1-1}, we consider the following two cases:
\begin{itemize}
    \item Case 1: $x\geq a''$. In this case, one notice that $x^{\tau}\geq b''$, then \eqref{step1-1} follows from the 3rd or 4th inequality of \eqref{M0L}.
    \item Case 2: $x\leq a''$. In this case, as $x<x^{\tau}$, \eqref{step1-1} follows from the 1st to 3rd inequality of \eqref{M0L}.
\end{itemize}

 {\bf{Step 2.}} Step 1 provides a starting point, from which we can carry out the sliding. From $\tau = 2M_0$, we decrease $\tau,$  and show that for any $0 <\tau< 2M_0$, we also have 
 \begin{equation}\label{step2}
w^{i}_{\tau}(x) \geq 0  \,\, \mbox{in}\,\, \mathbb{R},\,\, \mbox{for}\,\, 0<\tau <2M_0\,\, \mbox{and for each}\,\,i\in\{1,\cdots,m\},
\end{equation}
Define 
$$
\tau_0=\inf\{\tau \mid w^{i}_{\tau}(x) \geq  0,\,\, i=1,\cdots, m,\,\, x \in \mathbb{R},\,\, 0<\tau<2M_0.\}
$$
By definition and continuity of $u$, we have
\begin{equation}\label{eq. positivity by continuity}
w^i_{\tau_0}(x)\ge 0 \quad \text{in } \mathbb{R}.
\end{equation}

We make the following claim:
\begin{itemize}
    \item \textbf{Claim:} There exists an $x_{0}\in\mathbb{R}$, such that 
    \[
\mathbf u_{\tau_0}(x_0)=\mathbf u(x_0)\neq\pm\mathbf e,
\]
and
\[
u_{\tau_0,x}^i(x_0)=u_x^i(x_0)>0,
\qquad i=1,\ldots,m .
\]
\end{itemize}

The proof of the claim will be postponed to Step 3. Assuming the claim for
the moment, we continue the proof of Step 2 and show that
\[
\tau_0=0.
\]

Let $$A(s)=|s|^{p-2}s.$$ Each component $u^i(x)$ satisfies $$\displaystyle\frac{d}{dx}A(u^{i}_{x})=W_{i}(\mathbf{u}).$$ Since $\mathbf{u}_{\tau_0}(x)$ is also a solution of  the same system, subtracting the two equations yields
\begin{equation*}
    \frac{d}{dx}\Big(A\big(u^{i}_{x}+(w_{\tau_0}^{i})_{x}\big)-A(u^{i}_{x})\Big)=W_{i}(\mathbf{u}+\mathbf{w}_{\tau_0})-W_{i}(\mathbf{u})=\sum_{j=1}^{m}c_{ij}(x)w_{\tau_0}^{j}.
\end{equation*}
By the assumption $(A1)$ on the potential $W(\cdot)$, we have 
\begin{equation}\label{eq. c_ij for w_tau is cooperative}
    c_{ij}(x)<0,\quad\mbox{for all }x\mbox{ close to }x_{0}\mbox{ and }i\in\{1,\cdots,m\},
\end{equation}
where
\[
c_{ij}(x)
=
\int_0^1
W_{ij}\big(\mathbf u+t\mathbf w_{\tau_0}\big)\,dt .
\]
By the mean value theorem, there exist $\rho^{i}=\rho^{i}(x)\in(0,1)$ for $i\in\{1,\cdots,m\}$ such that
\begin{align*}
    &\frac{d}{dx}\Big(A\big(u^{i}_{x}+(w_{\tau_0}^{i})_{x}\big)-A(u^{i}_{x})\Big)=\frac{d}{dx}\Big(A'\big(u^{i}_{x}+\rho\cdot(w_{\tau_0}^{i})_{x}\big)\cdot(w_{\tau_0}^{i})_{x}\Big).
\end{align*}
For each $i\in\{1,\cdots,m\}$, set
\begin{equation*}
    a^{i}(x)=A'\big(u^{i}_{x}+\rho^{i}\cdot(w_{\tau_0}^{i})_{x}\big),\quad b^{i}(x)=\frac{d}{dx}\Big(A'\big(u^{i}_{x}+\rho^{i}\cdot(w_{\tau_0}^{i})_{x}\big)\Big),
\end{equation*}
then $\mathbf{w}_{\tau_0}$ satisfies a linearized equation in the following divergence form:
\begin{equation}\label{uniformeq}
    \frac{d}{dx}\Big(a^{i}(x)\cdot(w^{i}_{\tau_0})_{x}\Big)=\sum_{j=1}^{m}c_{ij}(x)w^{j}_{\tau_0}.
\end{equation}

By the $C^{1,\epsilon}$ regularity of $\mathbf{u}$ (as well as $\mathbf{u}_{\tau}$), it then follows from the \textbf{Claim} that
\begin{equation*}
    u^{i}_{x},(u_{\tau_0}^{i})_{x}>0,\quad\mbox{for all }x\mbox{ close to }x_{0}\mbox{ and }i\in\{1,\cdots,m\}.
\end{equation*}
Since $A(s)=|s|^{p-2}s$ has strictly positive derivative for $s>0$, it follows that
\begin{equation}\label{eq. a^i(x) is uniform}
    0<c_{1}\leq a^{i}(x)\leq C_{2}<\infty,\quad\mbox{for all }x\mbox{ close to }x_{0}\mbox{ and }i\in\{1,\cdots,m\},
\end{equation}
By \eqref{eq. c_ij for w_tau is cooperative} and \eqref{eq. a^i(x) is uniform}, it follows that the linearized equation \eqref{uniformeq} is uniformly elliptic and strictly cooperative near $x_{0}$. By \eqref{eq. positivity by continuity} and the \textbf{Claim}, applying the strong maximum principle for cooperative systems yields that
\begin{equation}\label{eq. totally contact}
    \mathbf{u}\equiv\mathbf{u}_{\tau_0},\quad\mbox{for all }x\mbox{ sufficiently close to }x_{0}.
\end{equation}
Then, every $x_{1}$ sufficiently close to $x_{0}$ is a contact point between $\mathbf{u}_{\tau}$ and $\mathbf{u}$. As long as $\mathbf{u}(x_{1})\neq\pm\mathbf{e}$, using the same argument as in \textbf{Step 3} below shows that $x_{1}$ satisfies all requirements as given in the \textbf{Claim} (we leave the verification job to the readers). Repeating the argument above yields that \eqref{eq. totally contact} also holds at $x_{1}$. Then an open-and-closed argument yields that $\mathbf{u}\equiv\mathbf{u}_{\tau_0}$ everywhere, and thus $\tau_{0}=0$.

By combining \textbf{Step 1} and \textbf{Step 2}, we complete the proof of Lemma \ref{le5.4}.

\textbf{Step 3.} Finally, let us prove the \textbf{Claim} given previously in Step 2. The proof is broken into three sub-steps.
\begin{itemize}
    \item First, we show that there exist an $x_{0}\in\mathbb{R}$ and an $i\in\{1,\cdots,m\}$, such that
    \begin{equation*}
        u_{\tau_{0}}^{i}(x_{0})=u^{i}(x_{0})\quad\mbox{and }\mathbf{u}(x_{0})\neq\pm\mathbf{e}.
    \end{equation*}
    
    To see this, we first obtain from the supremum assumption on $\tau_{0}$ that there exists a sequence $\{x_{k}\}\subseteq\mathbb{R}$ for sufficiently large $k$, such that
    \begin{equation}\label{eq. w(x_k) negative}
        w_{\tau_{0}-\frac{1}{k}}^{i_{k}}(x_{k})<0\quad\mbox{for some }i_{k}\in\{1,\cdots,m\}.
    \end{equation}
    However, let $x\in(-\infty,a'')\cup(b'',+\infty)$ and let $\tau\geq\frac{\tau_{0}}{2}$, then it follows from $x^{\tau}\geq x$ and \eqref{M0L} that $$\mathbf{u}_{\tau}(x)\geq \mathbf{u}(x).$$ Therefore, we see that $x_{k}\in[a'',b'']$ for sufficiently large $k$. Consequently, there exists a subsequence of $x_{k}$ and some $x_{\infty}\in[a'',b'']$, such that $x_{k}\to x_{\infty}$ and $i_{k}=constant=:i_{\infty}$. It then follows from \eqref{eq. positivity by continuity} and \eqref{eq. w(x_k) negative} that $$w_{\tau_{0}}^{i_{\infty}}(x_{\infty})=0.$$ Moreover, since $x_{\infty}\in[a'',b'']$, we get $\mathbf{u}(x_{\infty})\neq\pm\mathbf{e}$. Then the first sub-step is verified by setting $x_{0}=x_{\infty}$ and $i=i_{\infty}$.

    \item Second, we further show that $$\mathbf{u}_{\tau_{0}}(x_{0})=\mathbf{u}(x_{0}).$$ This means $\mathbf{u}_{\tau_{0}}$ touches $\mathbf{u}$ from above at $x_{0}$ in all components, which further implies that $$\nabla\mathbf{u}_{\tau_{0}}(x_{0})=\nabla\mathbf{u}(x_{0}).$$
    
    We show $\mathbf{u}_{\tau_{0}}(x_{0})=\mathbf{u}(x_{0})$ by contradiction. Without loss of generality, assume that $i=1$ in the conclusion of the first sub-step, and suppose that $u_{\tau_{0}}^{2}(x_{0})\neq u^{2}(x_{0})$. By \eqref{eq. positivity by continuity}, the following inequalities hold in a small neighborhood of $x_{0}$:
    \begin{equation*}
        u_{\tau_{0}}^{j}(x)\geq u^{j}(x)\mbox{ for all }j\in\{i,\cdots,m\},\quad u_{\tau_{0}}^{2}(x)>u^{2}(x).
    \end{equation*}
    By the assumption (A1) on $W(\cdot)$ and by $u_{\tau_{0}}^{1}(x_{0})>u^{1}(x_{0})$, we have
    \begin{equation*}
        D_{1}W\big(\mathbf{u}_{\tau}(x_{0})\big)<D_{1}W\big(\mathbf{u}(x_{0})\big).
    \end{equation*}
    This further implies that $$\Delta_{p}u_{\tau_{0}}^{1}(x)<\Delta_{p}u^{1}(x)$$ in a small neighborhood of $x_{0}$, which contradicts the fact that $u_{\tau_{0}}^{1}$ touches $u^{1}$ from above at $x_{0}$.

    \item Third, we show that $\mathbf{u}$ is pointwisely increasing at $x_{0}$. Precisely speaking, we intend to show that for all $t>0$ and $i\in\{1,\cdots,m\}$,
    \begin{equation*}
        u^{i}(x_{0}-t)\leq u^{i}(x_{0})\leq u^{i}(x_{0}+t).
    \end{equation*}

    To see this, suppose that it holds for some $t>0$ and $i\in\{1,\cdots,m\}$ that
    \begin{equation*}
        u^{i}(x_{0})>u^{i}(x_{0}+t)\mbox{ or }u^{i}(x_{0})<u^{i}(x_{0}-t).
    \end{equation*}
    Consider $\tau=\tau_{0}+t>\tau_{0}$, then we must have $u_{\tau}^{i}\geq u^{i}$ in $\mathbb{R}$. On the other hand, if $u^{i}(x_{0})>u^{i}(x_{0}+t)$ or $u^{i}(x_{0})<u^{i}(x_{0}-t)$ holds, then
    \begin{equation*}
        u_{\tau}^{i}(x_{0})=u^{i}(x_{0}+t)<u^{i}(x_{0})\mbox{ or }u_{\tau}^{i}(x_{0}-t)=u^{i}(x_{0})<u^{i}(x_{0}-t),
    \end{equation*}
    causing a contradiction.
\end{itemize}
In the end, we complete the proof of the \textbf{Claim} by applying Lemma~\ref{le5.3}.
\end{proof}

\subsection{Proof of Theorem~\ref{thm. basic}}
We end this section by giving the proof of Theorem~\ref{thm. basic}.
\begin{proof}[Proof of Theorem~\ref{thm. basic}]
The existence of a heteroclinical solution to \eqref{eq. GP equation} satisfying \eqref{eq. speed identity} follows from the variational construction in Lemma~\ref{lem. existence}.

The monotonicity of $\mathbf{u}$ and the comparability of all components of $\mathbf{u}_{x}$ (i.e. $u^{i}_{x}\sim u^{j}_{x}$) near $a+$ or near $b-$ is a direct consequence of Corollary~\ref{cor3.1}. The monotonicity of $\mathbf{u}$ and the comparability of all components of $\mathbf{u}_{x}$ away from $a+$ and $b-$ is a consequence of Lemma~\ref{le5.3} and Lemma~\ref{le5.4}.

Finally, the asymptotic estimates for $u^{i}(x)+\Lambda_{i}$ follows from Lemma~\ref{lem. (non-)existence of free boundary} together with the fact $u^{i}_{x}\sim u^{j}_{x}$.
\end{proof}

\section{Curvature of the trajectory}
In this section, we investigate the geometric aspect of the heteroclinical solution and prove Theorem~\ref{thm. main}.
\subsection{An indirect way of computing the curvature}
Let $$\gamma(s)=(\gamma^{1}(s),\cdots,\gamma^{m}(s))$$ be a reparameterization of $\mathbf{u}(x)$, in the sense that $s=s(x)$ is an increasing function, and $\mathbf{u}(x)=\gamma(s(x))$. Moreover, we require that $\gamma(s)$ is arc-length in the $l_{p}$ sense, i.e.:
\begin{equation}\label{eq. unit lp curve}
    |\gamma'(s)|_{l_{p}}=\Big(\sum_{i=1}^{m}|\gamma^{i}_{s}|^{p}\Big)^{1/p}=1.
\end{equation}
Define
\begin{equation*}
    \widetilde{W}(s)=W\Big(\mathbf{u}(x(s))\Big).
\end{equation*}
As $|\mathbf{u}_{x}|_{l_{p}}=\Big(\frac{p}{p-1}\cdot W(\mathbf{u})\Big)^{1/p}=\frac{ds}{dx}$, we have
\begin{equation*}
    \mathbf{u}_{x}=\Big(\frac{p}{p-1}\cdot\widetilde{W}(s)\Big)^{1/p}\gamma_{s}=\frac{ds}{dx}\cdot\gamma_{s},\quad\mbox{i.e. }u^{i}_{x}=\Big(\frac{p}{p-1}\cdot\widetilde{W}(s)\Big)^{1/p}\gamma^{i}_{s}=\frac{ds}{dx}\cdot\gamma^{i}_{s}.
\end{equation*}
Besides,
\begin{equation*}
    \frac{ds}{dx}=\Big(\frac{p}{p-1}\cdot\widetilde{W}(s)\Big)^{1/p}
\end{equation*}

Let us take the second derivative of $\gamma$, then
\begin{equation}\label{eq. p-perpendicular}
    p\sum_{i=1}^{m}|\gamma^{i}_{s}|^{p-2}\gamma^{i}_{s}\cdot\gamma^{i}_{ss}=\frac{d}{ds}\Big(\sum_{i=1}^{m}|\gamma^{i}_{s}|^{p}\Big)=0.
\end{equation}

The following lemma is the key ingredient in this subsection, which provides an indirect way of computing the curvature.
\begin{lemma}
    We have the following identity:
    \begin{equation}\label{eq. how to calculate the magnetude of the curvature}
        \sum_{i=1}^{m}W_{i}(\mathbf{u})\gamma^{i}_{ss}=p\cdot\widetilde{W}(s)\sum_{i=1}^{m}|\gamma^{i}_{s}|^{p-2}|\gamma^{i}_{ss}|^{2}.
    \end{equation}
\end{lemma}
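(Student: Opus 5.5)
The identity follows by rewriting the Euler--Lagrange system \eqref{eq. GP equation} in the $l_p$-arc-length variable $s$, multiplying by $\gamma^{i}_{ss}$, and cancelling the tangential term with \eqref{eq. p-perpendicular}.

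\textbf{Regularity.} We work on $(s_-,s_+)$, i.e. for $x\in(a,b)$. By Theorem~\ref{thm. basic} we have $u^{i}_{x}>0$ there. So \eqref{eq. GP equation} reads $u^{i}_{xx}=W_i(\mathbf{u})/\big((p-1)(u^{i}_{x})^{p-2}\big)$, which shows $\mathbf{u}\in C^{2}$ on $(a,b)$. Also $W(\mathbf{u})>0$ there and $W$ is $C^2$ away from $\pm\mathbf{e}$. Hence $s(x)$ is a $C^2$ diffeomorphism, $\gamma$ is $C^2$, and $\widetilde W$ is $C^1$ in $s$. All manipulations below are therefore classical.

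\textbf{Rewriting the equation in $s$.} Set $A(t)=|t|^{p-2}t$ and
\[
\varphi(s)=\Big(\tfrac{p}{p-1}\widetilde W(s)\Big)^{1/p}.
\]
From the relations above, $u^{i}_{x}=\varphi\,\gamma^{i}_{s}$ and $\frac{d}{dx}=\varphi\frac{d}{ds}$. Since $\varphi>0$, we get $A(u^i_x)=\varphi^{p-1}A(\gamma^i_s)$. Equation \eqref{eq. GP equation} then becomes
\[
W_i(\mathbf{u})=\varphi\,\frac{d}{ds}\Big(\varphi^{p-1}A(\gamma^{i}_{s})\Big)
=\varphi\,(\varphi^{p-1})_s\,A(\gamma^{i}_{s})+(p-1)\,\varphi^{p}\,|\gamma^{i}_{s}|^{p-2}\gamma^{i}_{ss}.
\]
Now $\varphi(\varphi^{p-1})_s=(p-1)\varphi^{p-1}\varphi_s=\tfrac{p-1}{p}(\varphi^p)_s=\widetilde W'(s)$, and $(p-1)\varphi^p=p\,\widetilde W$. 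This yields the componentwise identity
\[
W_i(\mathbf{u})=\widetilde W'(s)\,|\gamma^{i}_{s}|^{p-2}\gamma^{i}_{s}+p\,\widetilde W(s)\,|\gamma^{i}_{s}|^{p-2}\gamma^{i}_{ss}.
\]

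\textbf{Conclusion.} Multiply this identity by $\gamma^{i}_{ss}$ and sum over $i$. By \eqref{eq. p-perpendicular}, $\sum_i|\gamma^{i}_{s}|^{p-2}\gamma^{i}_{s}\gamma^{i}_{ss}=0$, so the $\widetilde W'$ term vanishes. What remains is exactly \eqref{eq. how to calculate the magnetude of the curvature}.

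\textbf{Main difficulty.} The algebra is routine. The only delicate point is justifying second derivatives when $p\neq2$, since the $p$-Laplacian degenerates where $u^i_x=0$. This is handled by the strict monotonicity $u^i_x>0$ from Theorem~\ref{thm. basic}, which restores uniform ellipticity componentwise in the interior. Consequently the identity is claimed only for $s\in(s_-,s_+)$, not at the endpoints.
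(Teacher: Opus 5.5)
Your proof is correct and follows essentially the paper's argument: the paper also rewrites \eqref{eq. GP equation} in the variable $s$, multiplies by $\gamma^{i}_{ss}$, sums over $i$, and removes the tangential term via \eqref{eq. p-perpendicular}. The only difference is that the paper expands $u^{i}_{xx}$ by the chain rule and inserts it into $(p-1)|u^{i}_{x}|^{p-2}u^{i}_{xx}$ rather than using your divergence form, and your regularity justification via $u^{i}_{x}>0$ is a point the paper leaves implicit.
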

\begin{proof}
    For simplicity, denote $$v(s)=\Big(\frac{p}{p-1}\cdot\widetilde{W}(s)\Big)^{1/p}.$$  Using the chain rule, we obtain
    \begin{equation*}
        u^{i}_{xx}=\frac{d}{dx}(\frac{ds}{dx}\cdot\gamma^{i}_{s})=\frac{ds}{dx}\cdot\frac{d}{ds}(v(s)\cdot\gamma^{i}_{s})=v(s)v'(s)\gamma^{i}_{s}+v(s)^{2}\gamma^{i}_{ss}.
    \end{equation*}
    Then, \eqref{eq. GP equation} implies
    \begin{equation*}
        W_{i}(\mathbf{u})=(p-1)|u^{i}_{x}|^{p-2}u^{i}_{xx}=(p-1)|u^{i}_{x}|^{p-2}\cdot\Big\{v(s)v'(s)\gamma^{i}_{s}+v(s)^{2}\gamma^{i}_{ss}\Big\}.
    \end{equation*}
    Since $u^{i}_{x}=v(s)\cdot\gamma^{i}_{s}$, we then have
    \begin{equation*}
        W_{i}(\mathbf{u})=(p-1)v(s)^{p-2}|\gamma^{i}_{s}|^{p-2}\cdot\Big\{v(s)v'(s)\gamma^{i}_{s}+v(s)^{2}\gamma^{i}_{ss}\Big\}.
    \end{equation*}
    Multiplying by $\gamma^{i}_{ss}$ on both sides, and summing over $i$ gives
    \begin{equation*}
        \sum_{i=1}^{m}W_{i}(\mathbf{u})\gamma^{i}_{ss}=(p-1)v(s)^{p-1}v(s)'\sum_{i=1}^{m}|\gamma^{i}_{s}|^{p-2}\gamma^{i}_{s}\gamma^{i}_{ss}+(p-1)v(s)^{p}\sum_{i=1}^{m}|\gamma^{i}_{s}|^{p-2}|\gamma^{i}_{ss}|^{2}.
    \end{equation*}
    Using \eqref{eq. p-perpendicular}, we have
    \begin{equation*}
        \sum_{i=1}^{m}W_{i}(\mathbf{u})\gamma^{i}_{ss}=(p-1)v(s)^{p}\sum_{i=1}^{m}|\gamma^{i}_{s}|^{p-2}|\gamma^{i}_{ss}|^{2}.
    \end{equation*}
    Then, we have verified \eqref{eq. how to calculate the magnetude of the curvature}.
\end{proof}
\subsection{Proof of Theorem~\ref{thm. main}}
With the formulas \eqref{eq. p-perpendicular} and \eqref{eq. how to calculate the magnetude of the curvature}, we have a indirect way of estimating the magnitude to $\gamma_{ss}$, which leads to the proof of Theorem~\ref{thm. main}.
\begin{proof}[Proof of Theorem~\ref{thm. main}]
Recall that Corollary~\ref{cor3.1} implies
\[
u^{i}_{x}\sim u^{j}_{x}
\]
as $x\to a+$ or $x\to b-$. Moreover, it follows from Lemma~\ref{le5.3} and Lemma~\ref{le5.4} that $u^{i}_{x}\sim u^{j}_{x}$ when $x$ is away from $a$ and $b$. Therefore, part (1) of Theorem~\ref{thm. main} follows from the observation that $\gamma_{s}$ is parallel to $\mathbf{u}_{x}$ when $s=s(x)$.

We next prove part (2). Recall that it follows from Corollary~\ref{cor3.1} that $DW(\mathbf{u})$ is ``almost parallel to" $\big(|u^{1}_{x}|^{p-2}u^{1}_{x},\cdots,|u^{m}_{x}|^{p-2}u^{m}_{x}\big)$ as $x\to a+$ or $x\to b-$. Define
\begin{equation*}
    \Theta(x)=\angle{\Big(\big(|u^{1}_{x}|^{p-2}u^{1}_{x},\cdots,|u^{m}_{x}|^{p-2}u^{m}_{x}\big),\big(W_{1}(\mathbf{u}),\cdots,W_{m}(\mathbf{u})\big)\Big)},
\end{equation*}
one can infer from Corollary~\ref{cor3.1} that
\begin{equation}\label{eq. Theta to 0}
    \lim_{x\to a+}\Theta(x)=\lim_{x\to b-}\Theta(x)=0.
\end{equation}

By the definition of $\gamma(s)$, and by \eqref{eq. p-perpendicular}, one has
\begin{equation*}
    \angle{\Big(\big(\gamma^{1}_{ss},\cdots,\gamma^{m}_{ss}\big),\big(W_{1}(\mathbf{u}),\cdots,W_{m}(\mathbf{u})\big)\Big)}=\frac{\pi}{2}\pm\Theta.
\end{equation*}
Consequently,
\begin{equation}\label{eq. cosine rule}
    \Big|\sum_{i=1}^{m}W_{i}(\mathbf{u})\gamma^{i}_{ss}\Big|=|DW(\mathbf{u})|_{l_{2}}\cdot|\gamma_{ss}|_{l_{2}}\cdot\sin{(\Theta)}.
\end{equation}

Moreover, as we have shown $\gamma_{s}^{i}\sim\gamma_{s}^{j}$ in Theorem~\ref{thm. main} (1), it follows from \eqref{eq. unit lp curve} that
\begin{equation}\label{eq. rhs of the indirect formula}
    p\cdot\widetilde{W}(s)\sum_{i=1}^{m}|\gamma^{i}_{s}|^{p-2}|\gamma^{i}_{ss}|^{2}\geq c\cdot\widetilde{W}(s)\cdot|\gamma_{ss}|_{l_{2}}^{2}.
\end{equation}
Combining \eqref{eq. how to calculate the magnetude of the curvature} with the additional information \eqref{eq. cosine rule}-\eqref{eq. rhs of the indirect formula} yields that
\begin{equation*}
    |DW(\mathbf{u})|_{l_{2}}\cdot\sin{(\Theta)}\geq c\cdot\widetilde{W}(s)\cdot|\gamma_{ss}|_{l_{2}}=c\cdot W\big(\mathbf{u}(s)\big)\cdot|\gamma_{ss}|_{l_{2}}.
\end{equation*}
By \eqref{eq. Theta to 0} together with the homogeneity assumptions (A4)-(A5), we conclude that
\begin{equation*}
    |\gamma_{ss}|_{l_{2}}\leq C\cdot\frac{|DW(\mathbf{u})|_{l_{2}}}{W\big(\mathbf{u}(s)\big)}\cdot\sin{(\Theta)}=\left\{\begin{aligned}
        &o\big(\frac{1}{s-s_{-}}\big),&\mbox{as }&s\to s_{-},\\
        &o(\frac{1}{s_{+}-s}),&\mbox{as }&s\to s_{+}.
    \end{aligned}\right.
\end{equation*}
Then, the desired estimate in part (2) follows as $|\gamma_{s}(s)|\sim|\gamma_{s}(s)|_{l_{p}}=1$.
\end{proof}

\section{A casual talk of possible application}\label{sec. casual talk}
Finally, we briefly discuss a possible application of the above analysis and explain our motivation for studying this problem. This section can also be viewed as a continuation of the discussion in the Introduction.

We believe that a similar construction can be extended to the Bose-Einstein condensation case (i.e., $m\geq2$). However, the key difference is that a curvature term pops out, as the trajectory of the heteroclinical solution in a higher dimensional region \[
[-\Lambda_{1},\Lambda_{1}]\times\cdots\times[-\Lambda_{m},\Lambda_{m}]
\] is generally not a straight line.

Consider the following reparameterization of $\mathbf{u}(x)$:
\begin{equation*}
    \mathbf{g}(t)=\gamma(s(t)),\quad\mbox{where }\frac{ds}{dt}=\sqrt[p]{\frac{p}{p-1}\cdot h(s)}.
\end{equation*}
Let us compute $\frac{d}{dt}\mathbf{g}$:
\begin{align*}
    \frac{d}{dt}\mathbf{g}=\frac{ds}{dt}\cdot\frac{d}{ds}\gamma(s)=\sqrt[p]{\frac{p}{p-1}\cdot h(s)}\cdot\gamma_{s}.
\end{align*}
Let us compute $\Delta_{p}\mathbf{g}(t):=\frac{d}{dt}\Big(|g^{i}_{t}|^{p-2}\cdot g^{i}_{t}\Big)$ under the further assumption that $h'(s)>0$ and $\gamma^{i}_{s}>0$:
\begin{align*}
    \Delta_{p}\mathbf{g}(t)=&\frac{d}{dt}\Big\{\Big(\frac{p}{p-1}\cdot h(s)\Big)^{\frac{p-1}{p}}\cdot(\gamma^{i}_{s})^{p-1}\Big\}\\
    =&\sqrt[p]{\frac{p}{p-1}\cdot h(s)}\cdot\frac{d}{ds}\Big\{\Big(\frac{p}{p-1}\cdot h(s)\Big)^{\frac{p-1}{p}}\cdot(\gamma^{i}_{s})^{p-1}\Big\}\\
    =&h'(s)\cdot(\gamma^{i}_{s})^{p-1}+p\cdot h(s)\cdot(\gamma^{i}_{s})^{p-2}\cdot\gamma^{i}_{ss}.
\end{align*}
For the same reason, we have
\begin{equation*}
    \nabla_{i}W(\mathbf{u})=\Delta_{p}u^{i}(x)=\widetilde{W}'(s)\cdot(\gamma^{i}_{s})^{p-1}+p\cdot\widetilde{W}(s)\cdot(\gamma^{i}_{s})^{p-2}\cdot\gamma^{i}_{ss}.
\end{equation*}

Let us assume that for some positive and increasing function $\mathcal{E}(s)$,
\begin{equation*}
    h(s)=\widetilde{W}(s)-\mathcal{E}(s).
\end{equation*}
Then, we obtain the following identity:
\begin{equation*}
    \nabla_{i}W(\mathbf{g})-\Delta_{p}g^{i}(t)=\mathcal{E}'(s)\cdot(\gamma^{i}_{s})^{p-1}+p\cdot\mathcal{E}(s)\cdot(\gamma^{i}_{s})^{p-2}\cdot\gamma^{i}_{ss}.
\end{equation*}

Let us consider a weaker form of \eqref{eq. radial barrier supersolution}. More precisely, when $R$ is sufficiently large, \eqref{eq. radial barrier supersolution} is ``almost equivalent" to
\begin{equation*}
    \Delta_{p}g^{i}(t)=\frac{d}{dt}\Big(|g^{i}_{t}|^{p-2}\cdot g^{i}_{t}\Big)<\nabla_{i}W(\mathbf{g}).
\end{equation*}
From the discussion above, it suffices to require
\begin{equation}\label{eq. why curvature is important}
    \mathcal{E}'(s)\cdot\gamma^{i}_{s}+p\cdot\mathcal{E}(s)\cdot\gamma^{i}_{ss}>0.
\end{equation}
When $\mathcal{E}'(s)>0$ and $m=1$, the inequality above is immediate since  $$ \gamma_{ss}\equiv0.$$ 
This corresponds to the classical construction of super-solutions for the Allen-Cahn equation. 

However, in the Bose-Einstein condensation setting with $m\geq2$, the curvature term $\gamma^i_{ss}$ is generally nonzero and makes the verification of \eqref{eq. why curvature is important} considerably more delicate. This observation motivates the curvature estimate established in Theorem~\ref{thm. main}.

Let $[s_{-},s_{+}]$ be the domain of the arc-length re-parametrization, as assumed in Theorem~\ref{thm. main}. By choosing different $\mathcal{E}(s)$, one can verify \eqref{eq. why curvature is important} in some sub-intervals of $[s_{-},s_{+}]$. We list two important examples below.
\begin{itemize}
    \item[(1)] When we choose $$\mathcal{E}(s)=c_{1}\cdot(s-s_{-})^{\gamma}$$ for some $\gamma>0$, then Theorem~\ref{thm. main} indicates that \eqref{eq. why curvature is important} holds when $s$ is sufficiently close to $s_{-}$.
    \item[(2)] When we choose $$\mathcal{E}(s)=c_{2}\cdot e^{M\cdot s},$$ and if $s$ is away from $s_{\pm}$, then $|\gamma_{ss}|$ is bounded from above, and also Theorem~\ref{thm. main} implies that $\gamma^{i}_{s}$'s for all $1\leq i\leq m$ have a positive lower bound. Therefore, we see that for sufficiently large $M$, \eqref{eq. why curvature is important} holds when $s$ is away from $s_{\pm}$.
\end{itemize}
Furthermore, the above two possible choices of $\mathcal{E}(s)$ can be joint in a $C^{1}$ differentiable way, which gives a supersolution to \eqref{eq. why curvature is important} for $s\in[s_{-},s_{+}-\epsilon]$. We believe that this will provide a systematic way of constructing suitable supersolutions to \eqref{eq. GP equation}.

\noindent \textbf{Acknowledgments} 
L. Wu is partially supported by National Natural Science Foundation of China (Grant No. 12401133) and the Guangdong Basic and Applied Basic Research Foundation (2025B151502069).

\vspace{2mm}

\noindent \textbf{Conflict of interest.} The authors do not have any possible conflicts of interest.

\vspace{2mm}

\noindent \textbf{Data availability statement.}
 Data sharing is not applicable to this article, as no data sets were generated or analyzed during the current study.

\bibliographystyle{abbrv}
\bibliography{main}
\end{document}